\documentclass[11pt,twoside,reqno,centertags,english]{amsart}
\usepackage{kpfonts}
\usepackage[utf8]{inputenc}
\usepackage{babel}
\usepackage{fancyhdr}
\usepackage{geometry}
\usepackage{hyperref}
\hypersetup{
colorlinks,
citecolor=blue,
filecolor=blue,
linkcolor=blue,
urlcolor=blue
}

\usepackage{amsmath, amsfonts, amssymb}
\usepackage{amsthm}
\newtheorem{theorem}{Theorem}[section]
\newtheorem{lemma}[theorem]{Lemma}

\newtheorem{corollary}[theorem]{Corollary}

\theoremstyle{remark}
\newtheorem{remark}[theorem]{Remark}
\theoremstyle{definition}
\newtheorem{definition}[theorem]{Definition}

\DeclareMathOperator{\grad}{grad}
\DeclareMathOperator*{\esslimsup}{ess\;lim\;sup}
\DeclareMathOperator*{\esssup}{ess\;sup}

\DeclareMathOperator{\Div}{div}
\newcommand{\energy}{\mathcal W}
\newcommand{\entropy}{\mathcal E}
\newcommand{\entropyproduction}{D\mathcal E}
\newcommand {\R} {\mathbb{R}}
\newcommand{\cd}{\,d}
\DeclareMathOperator{\dive}{div}
\newcommand{\problem}{\eqref{eq:p1}--\eqref{eq:p4}}
\newcommand{\Pp}{{\mathcal{P}}}
\newcommand{\Mm}{{\mathcal{M}^+}}

\numberwithin{equation}{section}

\usepackage{etoolbox} 

\date{}

\title[Spherical gradient flows]{Spherical Hellinger-Kantorovich gradient flows}

\author[S.~Kondratyev]{Stanislav Kondratyev}
\address[S.~Kondratyev]{CMUC, Department of
Mathematics, University of Coimbra, 3001-501 Coimbra, Portugal}{}
\email{kondratyev@mat.uc.pt}

\author[D.~Vorotnikov]{Dmitry Vorotnikov}
\address[D.~Vorotnikov]{CMUC, Department of
Mathematics, University of Coimbra, 3001-501 Coimbra, Portugal}{}
\email{mitvorot@mat.uc.pt}




\begin{document}

\begin{abstract} We study nonlinear degenerate parabolic equations of Fokker-Planck type which can be viewed as gradient flows with respect to the recently introduced spherical Hellinger-Kantorovich distance. The driving entropy is not assumed to be geodesically convex. We prove solvability of the problem and the entropy-entropy production inequality, which implies exponential convergence to the equilibrium. As a corollary, we obtain some related results for the Wasserstein gradient flows. We also deduce transportation inequalities in the spirit of Talagrand, Otto and Villani for the spherical and conic Hellinger-Kantorovich distances.

\end{abstract}
\maketitle
Keywords: functional inequalities, Talagrand inequalities, optimal transport, Hellinger-Kan\-to\-rovich distance, geodesic non-convexity

\vspace{10pt}
\textbf{MSC [2010] 26D10, 35Q84, 49Q20, 58B20}

\section{Introduction}

Unbalanced optimal transport \cite{LMS18,KMV16A,CP18,LMS16,CPSV18,Rez15} is a recent variant of the Monge-Kantorovich transport which is relevant in the situations lacking the conservation of the total mass, such as processes involving reaction. Important objects in the field are the conic Hellinger-Kanto\-rovich distance (also known as the Wasserstein-Fisher-Rao distance) on the set of Radon measures and the spherical Hellinger-Kantoro\-vich distance on the set of probability measures, see Section  \ref{sec:tala} below for the definitions and references. 

On both the conic and spherical Hellinger-Kantoro\-vich spaces, some Otto calculus \cite{otto01,Vil08} can be developed \cite{KMV16A,BV18}, and it is easy to formally define the gradient flows. This paper considers the spherical gradient flows. 

Our basic setting is as follows. Let $\Omega$ be either an open connected bounded domain in $\mathbb 
R^d$ with sufficiently smooth boundary or a flat torus $\mathbb T^d$.  Fix functions $E \in C(\overline 
\Omega \times [0, \infty))$, $f \in C^1(\overline \Omega \times (0, 
+\infty))$, and a probability density $m \in C(\overline \Omega)$ satisfying
\begin{gather}
E(x, u) \ge 0, \quad (x, u) \in \overline \Omega \times [0, \infty);
\label{eq:eep-Ei}
\\
m(x) > 0, \quad x \in \overline \Omega;
\label{eq:eep-mi}
\\
E(x, m(x)) = 0, \quad x \in \Omega;
\label{eq:eep-Emi}
\\
E_u(x, u) = -f(x, u), \quad (x, u) \in \Omega \times (0, +\infty);
\label{eq:eep-Efi}
\\
f_u(x, u) < 0, \quad (x, u) \in \overline \Omega \times (0, +\infty)
\label{eq:eep-fi}
.
\end{gather}
Here we opted to fix $E$, $f$, $m$ satisfying some hypotheses, but it is 
possible to state all the assumptions in terms of $f$ only, and then 
reconstruct $E$ and $m$ in a relevant way, see Section \ref{s:gf}.  Some 
examples are presented in Section~\ref{ss:examples}.

The function
\begin{equation} \label{eq:entri}
\entropy (u) = \int_\Omega E(x, u(x)) \,dx
.
\end{equation} will be called the \emph{relative entropy}. 

We are interested in the formal gradient flow \begin{equation}
\partial_t u=-\grad \mathcal{E}(u),
\label{eq:gradf}
\end{equation} where the gradient is taken w.r.t. the spherical Hellinger-Kantorovich structure on the set of probability measures on $\Omega$. More specifically, we study the problem
\begin{align}
\partial_t u & = -\Div (u \nabla f) + u\left( f - \int_\Omega uf\, dx \right), 
& (x, t) & \in \Omega \times (0, \infty),
\label{eq:p1o}
\\
u \frac{\partial f}{\partial \nu} & = 0, & (x, t) & \in \partial \Omega \times 
(0, \infty),
\label{eq:p2o}
\\
u & = u^0, & (x, t) & \in \Omega \times {0},
\label{eq:p3o}
\\
u & \ge 0, \ \int_\Omega u \,dx = 1,
& (x, t) & \in \Omega \times (0, \infty)
.
\label{eq:p4o}
\end{align}
We refer to Remark \ref{slop} concerning the relation between \eqref{eq:gradf} and this problem. The model \eqref{eq:p1o}--\eqref{eq:p4o} can be viewed as a reactive nonlinear equation of Fokker-Planck type, in the spirit of \cite{F04}, with conservation of mass. Reaction-diffusion problems with conservation of mass were studied in \cite{QS07,HY95,RS92,Sou98,alr16,HMTW16,KHL17}, see also the references therein. On the other hand, after a change of variables, our problem fits into the framework of fitness-driven models of population dynamics, and might be applicable to some human societies. In Remark \ref{rem:hum} we discuss this issue in detail. 

\begin{remark}\label{slop} The right-hand sides of \eqref{eq:gradf} and 
\eqref{eq:p1o} formally coincide when $\Omega$ is a torus or is 
convex.  Indeed, the gradient under these assumptions was calculated 
in~\cite{LM17,BV18}:
\begin{equation*}
\grad_{HKS} \mathcal{E}(u) =-\Div \left(u \nabla \frac{\delta \mathcal 
        E}{\delta u}\right) + u\left( \frac{\delta \mathcal E}{\delta u} - 
        \int_\Omega u \frac{\delta \mathcal E}{\delta u}\, dx \right)
.
\end{equation*}
In the case of non-convex $\Omega$ we will still refer to  
\eqref{eq:p1o}--\eqref{eq:p4o} as to a gradient flow, although this is sloppy. 
\end{remark}

\begin{remark} For the metric gradient flows like \eqref{eq:gradf}, the geodesic convexity of the driving entropy functional (or at least semi-convexity, i.e., $\lambda$-convexity with a negative constant $\lambda$) makes a difference \cite{otto01,AGS06,Vil03,Vil08}. 
The presence of convexity allows one to apply minimizing movement schemes 
\cite{AGS06,JKO} to construct solutions to the gradient flow.  Moreover, 
$\lambda$-convexity with $\lambda$ strictly positive enables the Bakry-Emery 
procedure \cite{BE85} which usually yields the exponential convergence of the relative 
entropy to zero. Minimizing movement schemes for conic Hellinger-Kantorovich 
gradient flows of geodesically convex functionals and for related 
reaction-diffusion equations were suggested in \cite{MG16,GLM17}.  

Under our assumptions, the entropy, generally speaking, possesses neither 
geodesic convexity nor semi-convexity with respect to either the spherical or 
conic Hel\-linger-Kantorovich structure, or even to the classical Wasserstein 
one, cf. \cite{KV17,KMV16A}. \end{remark}

\begin{remark}\label{rem:hum} The fitness-driven models \cite{mc90,cosner05,cos13,T18} of population dynamics assume that the 
dispersal strategy is determined by a local intrinsic characteristic of 
organisms called \emph{fitness}. The 
fitness manifests itself as a growth rate, and simultaneously affects the 
dispersal as the species move along its gradient towards the most favorable 
environment. In terms of the PDEs, this can be expressed \cite{KV17} in the following manner: \begin{align}
\partial_t U & = -\Div (U \nabla F) + UF, 
& (x, t) & \in \Omega \times (0, \infty),
\label{eq:p1h}
\\
U \frac{\partial F}{\partial \nu} & = 0, & (x, t) & \in \partial \Omega \times 
(0, \infty).
\label{eq:p2h}
\\
U & = U^0, & (x, t) & \in \Omega \times {0}.
\label{eq:p3h}
\end{align}
Here $U(x,t)$ is the nonnegative density of individuals, and 
$F$ is the fitness which depends on $x$ and $U$ in a certain way. Namely, we 
assume that
\begin{equation} \label{Fvar} F(x,t)=f\left(x,\frac {U(x,t)} {\int_\Omega 
U(\xi,t)\, \cd \xi} \right).
\end{equation}
The direct dependence on $x$ expresses the spatial inhomogeneity of 
the resources. The dependence on the normalized population density (in 
contrast with \cite{mc90,cosner05,cos13,CW13,KV17} and the references 
therein, where the fitness depends on the density $U$ itself) models 
the phenomenon that the individuals compare the quality of their life 
with the ones of the other members of the society, and their fitness 
is determined by their relative success in comparison with the others. 
This model seems to be specifically relevant for those human societies 
where the population growth (which depends on various factors 
including  fertility, ability of children to survive, longevity etc.) 
is an increasing function of the quality of life. The problem 
\eqref{eq:p1h}--\eqref{eq:p3h} resembles a conic Hellinger-Kantorovich 
gradient flow, cf. \cite{KV17}, but this guess is wrong. The reason is 
that \eqref{Fvar} is not an $L^2$ variation of any functional. Setting 
$$M:={\int_\Omega U\, \cd x},\ u:=\frac {U} {M},\ M^0:={\int_\Omega 
U^0\, \cd x},\ u^0:=\frac {U^0} {M^0},$$ we recast \eqref{eq:p1h}, 
\eqref{eq:p2h} in the form \begin{align}
\partial_t u & = -\Div (u \nabla f) + u\left( f - \frac {d (\log M)} {dt}\right), 
& (x, t) & \in \Omega \times (0, \infty),
\label{eq:p1r}
\\
u \frac{\partial f}{\partial \nu} & = 0, & (x, t) & \in \partial \Omega \times 
(0, \infty).
\label{eq:p2r}
\\
u & = u^0, & (x, t) & \in \Omega \times {0},
\label{eq:p3r}
\\
u & \ge 0, \ \int_\Omega u \,dx = 1,
& (x, t) & \in \Omega \times (0, \infty)
.
\label{eq:p4r}
\end{align} Since $u(t)$ is a probability distribution, we at least formally infer that \begin{equation}\frac {d (\log M)} {dt}=\int_\Omega uf\, dx,\label{mass}\end{equation} arriving at \eqref{eq:p1o}--\eqref{eq:p4o}. On the other hand, given $U^0$ (and thus $u^0$ and $M^0$) and a solution $u$ to \eqref{eq:p1o}--\eqref{eq:p4o}, we can recover the mass $M(t)$ from \eqref{mass}, and $U=Mu$ solves \eqref{eq:p1h}--
\eqref{eq:p3h}. \end{remark}

In what follows, $d_{HK}$, $d_{HKS}$, and $W_2$ stand for the Hellinger-Kantorovich distance (which will be also referred to as the conic distance), spherical Hellinger-Kantorovich distance and the quadratic Wasserstein distance. Observe that \begin{equation} \label{compaint} d_{HK}\leq d_{HKS}\leq W_2\end{equation} for probability measures (see Section \ref{sec:tala} below), although $d_{HK}$ is of course defined for Radon measures of any mass.

In this paper, we prove solvability (Section \ref{s:gf}) and the entropy-entropy production inequality (Section \ref{s:ineq}) for the spherical Hellinger-Kantorovich gradient flow \eqref{eq:gradf}, and derive a related transportation inequality in the spirit of Talagrand, Otto and Villani. We also deduce some results of this kind for the Wasserstein and the conic Hellinger-Kantorovich gradient flows. As was already anticipated, we do not assume geodesic convexity of the driving entropies of the gradient flows. In order to better illustrate our results and compare them with the existing ones, let us formally write down the conceivable inequalities. 

The following four inequalities are expected to hold under the assumption $\int_\Omega u=1$:
\begin{gather}
\entropy(u) \lesssim \int_\Omega u |\nabla f|^2,
\label{eq:goalfa1}
\\
\entropy(u) \lesssim \int_\Omega u\left( f - \int_\Omega uf \right)^2 + 
\int_\Omega u |\nabla f|^2,
\label{eq:goalfa2}
\\
W_2^2(u,m) \lesssim \entropy(u),
\label{eq:goalfa4}
\\
d_{HKS}^2(u,m) \lesssim \entropy(u).
\label{eq:goalfa5}
\end{gather}
The next two inequalities do not require that $\int_\Omega u=1$:
\begin{gather}
\entropy(u) \lesssim \int_\Omega uf^2 + \int_\Omega u |\nabla f|^2,
\label{eq:goalfa3}
\\
d_{HK}^2(u,m) \lesssim \entropy(u).
\label{eq:goalfa6}
\end{gather}

Inequalities \eqref{eq:goalfa1},\eqref{eq:goalfa2},
\eqref{eq:goalfa3} are the entropy-entropy production inequalities for the Wasserstein, spherical Hellinger-Kantorovich and conic Hellinger-Kantorovich gradient flows, respectively. Inequalities \eqref{eq:goalfa4},\eqref{eq:goalfa5},
\eqref{eq:goalfa6} are the transportation (Talagrand) inequalities in those spaces. Note that \eqref{eq:goalfa1} implies \eqref{eq:goalfa2}, and \eqref{eq:goalfa2} yields \eqref{eq:goalfa3} since $$\int_\Omega u\left( f - \int_\Omega uf \right)^2=\int_\Omega uf^2-\left(\int_\Omega uf\right)^2.$$ However, the last implication is only valid for probability distributions $u$, whereas \eqref{eq:goalfa3} would not be a consequence of \eqref{eq:goalfa2} for $u$ of arbitrary mass. These three inequalities can be used to derive exponential convergence to the equilibrium $m$ for the corresponding gradient flows, see \cite{Vil03,Vil08,KV17} as well as  Theorems  \ref{th:convergence} and \ref{th:convergencew} below.

Due to \eqref{compaint}, inequality \eqref{eq:goalfa4} implies \eqref{eq:goalfa5}, and \eqref{eq:goalfa5} yields \eqref{eq:goalfa6} for probability distributions. Generally speaking, \eqref{eq:goalfa6} is not a corollary of \eqref{eq:goalfa5} (cf. Remark \ref{r:cons} below).

Inequality \eqref{eq:goalfa1} was proved in \cite{CJM01} via the Bakry-Emery 
approach provided the entropy is strictly geodesically convex w.r.t. the 
Wasserstein structure (displacement convex). It may be viewed as a generalized 
log-Sobolev inequality. The classical log-Sobolev corresponds to the case $f=- 
\log u$. Inequality \eqref{eq:goalfa2} will be proved in Section \ref{s:ineq} 
without assuming any kind of geodesic convexity. This inequality can be used 
to derive \eqref{eq:goalfa1} for geodesically non-convex entropies (see 
Section \ref{s:logs}) provided $u$ satisfies the Poincar\'e inequality (this 
is true for instance when $u$ is a Muckenhoupt weight \cite{FKS82}). 
Inequality \eqref{eq:goalfa3} was established in \cite{KV17} and will be used 
in the proof of \eqref{eq:goalfa2}.  Inequality \eqref{eq:goalfa4} was proved 
in \cite{T96,OV00,CMcV03,CGH04} (mainly for the case $\Omega=\R^d$) for 
strictly displacement convex entropies. Inequalities \eqref{eq:goalfa5} and 
\eqref{eq:goalfa6} will be proved in Section \ref{sec:tala}, again without 
assuming any geodesic convexity.

\section{Spherical inequality} \label{s:ineq}

Let $\Omega$ be an open connected bounded domain in $\mathbb 
R^d$ with sufficiently smooth boundary. The results of the section remain valid for the torus $\Omega=\mathbb T^d$.  Throughout the section, we will work with functions $E \in C(\overline 
\Omega \times [0, \infty))$, $f \in C^1(\overline \Omega \times (0, 
+\infty))$, and a probability density $m \in C(\overline \Omega)$ satisfying
\begin{gather}
E(x, u) \ge 0, \quad (x, u) \in \overline \Omega \times [0, \infty);
\label{eq:eep-E}
\\
m(x) > 0, \quad x \in \overline \Omega;
\label{eq:eep-m}
\\
E(x, m(x)) = 0, \quad x \in \Omega;
\label{eq:eep-Em}
\\
E_u(x, u) = -f(x, u), \quad (x, u) \in \Omega \times (0, +\infty);
\label{eq:eep-Ef}
\\
f_u(x, u) < 0, \quad (x, u) \in \overline \Omega \times (0, +\infty)
\label{eq:eep-f}
.
\end{gather}

In what follows, bare $f$ stands for $f(x, 
u(x))$, where $u \in U$ is given; likewise, $\nabla f$ stands for the full gradient 
of $f(x, u(x))$ with respect to~$x$.

The following theorem states the main result.

\begin{theorem}
\label{th:ineq-main}
Assume \eqref{eq:eep-E}--\eqref{eq:eep-f}. Let $U$ be a uniformly integrable 
set of smooth probability measures on $\overline\Omega$. Then, for all 
$u \in U$ and $a \in \mathbb R$,
\begin{equation}
\label{eq:goalfa}
\int_\Omega E(x, u(x)) \, dx
\le C\left[\int_\Omega u(x)(f(x, u(x))-a)^2 \, dx
+ \int_\Omega u(x)|\nabla f(x, u(x))|^2 \, dx\right],
\end{equation}
where the constant $C$ may depend on $U$ but is independent of $u$ and $a$.
\end{theorem}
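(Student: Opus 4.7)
The plan is to leverage the conic entropy--entropy production inequality \eqref{eq:goalfa3} (established in \cite{KV17} without requiring $\int u = 1$) and to absorb the extra scalar term $\bar f_u^2$ that arises from the mean $\bar f_u := \int_\Omega u f\,dx$.

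First I would reduce to the case $a = \bar f_u$: since $\int u = 1$, one has $\int u(f-a)^2\,dx = \int u(f-\bar f_u)^2\,dx + (a-\bar f_u)^2$, so the right-hand side of \eqref{eq:goalfa} is minimized at this choice of $a$. Applying \eqref{eq:goalfa3} then gives $\entropy(u) \le C_0\bigl[\int u f^2\,dx + \int u|\nabla f|^2\,dx\bigr]$ with $C_0 = C_0(U)$, and decomposing $\int u f^2\,dx = \int u(f-\bar f_u)^2\,dx + \bar f_u^2$ reduces the theorem to showing the auxiliary estimate
\[
\bar f_u^2 \le C_1\Bigl[\int_\Omega u(f-\bar f_u)^2\,dx + \int_\Omega u|\nabla f|^2\,dx\Bigr] + \tfrac{1}{2C_0}\entropy(u)
\]
uniformly for $u \in U$; the resulting entropy contribution is then absorbed on the left.

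The main obstacle lies in establishing this auxiliary estimate. I would proceed by contradiction: if it fails, there are $u_n \in U$ for which, after rescaling, $\bar f_{u_n}^2 = 1$ while $\int u_n(f_n - \bar f_n)^2\,dx$, $\int u_n|\nabla f_n|^2\,dx$ and $\entropy(u_n)$ all tend to zero. Uniform integrability of $U$ yields a weakly $L^1$-convergent subsequence $u_n \rightharpoonup u_*$ with $\int u_* = 1$. The assumptions \eqref{eq:eep-E}--\eqref{eq:eep-f} make $u = m$ the unique zero of $\entropy$ (pointwise in $x$), and combined with lower semicontinuity this should force $u_n \to m$ strongly in $L^1$. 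The delicate point is to upgrade this to $\bar f_{u_n} = \int u_n f(x,u_n)\,dx \to 0$, which contradicts $|\bar f_{u_n}| = 1$: the function $f(x,u)$ may blow up as $u \to 0^+$, so passing to the limit in the product $u_n f_n$ requires uniform integrability of the sequence $\{u_n f_n\}$. Establishing this integrability is the heart of the argument; the uniform integrability of $\{u_n\}$ together with the vanishing of the dissipation integrals (which control both the variance and the weighted gradient of $f_n$) should preclude $u_n$ from concentrating mass on regions where $|f_n|$ is wildly large.
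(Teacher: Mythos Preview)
Your opening reduction matches the paper exactly: specialize to $a=\bar f_u$ via the Pythagorean identity in $L^2(u\,dx)$, invoke the conic inequality \eqref{eq:goalfa3} of \cite{KV17}, and reduce to controlling $\bar f_u^2$ by the spherical dissipation $D(u):=\int u(f-\bar f_u)^2\,dx+\int u|\nabla f|^2\,dx$. The paper formulates this target as inequality~\eqref{eq:_goal}.

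The gap is in your compactness step. Negating the auxiliary estimate produces $u_n\in U$ with $\bar f_n^2>n\,D(u_n)+\tfrac{1}{2C_0}\entropy(u_n)$, hence $D(u_n)<\bar f_n^2/n$ and $\entropy(u_n)<2C_0\,\bar f_n^2$. There is no legitimate ``rescaling'' that turns this into $\bar f_n^2=1$: the problem carries no homogeneity in $u$, and dividing the inequality by $\bar f_n^2$ leaves $\entropy(u_n)$ merely \emph{bounded} by $2C_0$, not tending to zero. Your chain ``$\entropy(u_n)\to 0\Rightarrow u_n\to m\Rightarrow \bar f_n\to 0$'' therefore never gets started. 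You must also exclude $|\bar f_n|\to\infty$, which uniform integrability alone does not: for $f(x,u)=1-u$ on a domain of unit volume one has $\bar f_u=1-\int u^2$, and one can build uniformly integrable probability densities with $\int u_n^2\to\infty$ (e.g.\ $u_n\approx 1+n^{3/4}\mathbf 1_{A_n}$, $|A_n|=1/n$). Finally, even granting $u_n\to m$ in $L^1$, passing to the limit in $\bar f_n=\int u_n f(x,u_n)\,dx$ under only \eqref{eq:eep-E}--\eqref{eq:eep-f} is not justified, since those hypotheses give no control on $uf(x,u)$ near $u=0$ or for large $u$.

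The paper avoids compactness entirely and proves the sharper quantitative bound of Lemma~\ref{lem:ineq-main-lemma}: $\int_{[u\ge\sigma]}u\bigl[(f-a)^2+|\nabla f|^2\bigr]\,dx\ge\varkappa a^2$ for \emph{all} real $a$, with explicit $\sigma,\varkappa$ depending only on the modulus of integrability $\omega_U$ and on $f$ near $u=m$. The argument is a case analysis on the size and sign of $a$ (Lemmas~\ref{lem:ineq4}, \ref{lem:ineq61}, \ref{lem:ineq6}, \ref{lem:ineq7}). In each case $\Omega$ is partitioned by level sets of $f$; the mass constraint $\int u=\int m=1$ forces lower bounds on the measures of sub- and super-level sets (Lemmas~\ref{lem:ineq2}, \ref{lem:ineq2.5}, \ref{lem:ineq3}, \ref{lem:ineq5}), and the gradient term is bounded below via the coarea formula and the relative isoperimetric inequality (Lemma~\ref{lem:ineq1}). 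This quantitative machinery is precisely what replaces the soft limit you are hoping for.
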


By approximation, this theorem can be extended to non-smooth functions: see, 
for instance, our Theorem~\ref{th:eep}.

Our strategy of the proof of Theorem \ref{th:ineq-main} consists in proving the inequality
\begin{equation}
\label{eq:_goal}
\int_\Omega u(f - a)^2 \, dx
+ \int_\Omega u|\nabla f|^2 \, dx
\ge \varkappa a^2
\end{equation}
with a constant $\varkappa > 0$ independent of~$u$ ranging over a uniformly 
integrable set~$U$.  Indeed, by \cite[Theorem 2.9]{KV17}, we have the 
inequality
\begin{equation*}
\int_\Omega E \, dx
\le
C_1
\int_\Omega u(f^2 + |\nabla f|^2) \, dx
\end{equation*}
(we can apply the theorem because uniform integrability ensures that no 
sequence in~$U$ converges to~$0$ in measure).  Setting
\begin{equation*}
\bar f = \int uf \, dx
\end{equation*}
and recalling that~$u$ is a probability measure, we see that
\begin{equation*}
\int_\Omega uf^2 \, dx = \int_\Omega u(f - \bar f)^2 \, dx + \bar f^2
,
\end{equation*}
so if we had~\eqref{eq:_goal}, we would apply it for $a = \bar f$ obtaining
\begin{equation*}
\int_\Omega uf^2 \,dx \le (1 + \varkappa^{-1}) \int_\Omega u(f - \bar f)^2 \, 
dx+\varkappa^{-1}\int_\Omega u|\nabla f|^2 \, dx,
\end{equation*}
and thus,
\begin{equation*}
\int_\Omega E \, dx
\le
C \left[
\int_\Omega u(f - \bar f)^2 \, dx
+ \int_\Omega u|\nabla f|^2 \, dx
\right]
.
\end{equation*}
This particular case of~\eqref{eq:goalfa} actually implies~\eqref{eq:goalfa}, 
as
\begin{equation*}
\int_\Omega u(f - \bar f)^2 \,dx
= \min_{a \in \mathbb R}
\int_\Omega u(f - a)^2 \,dx
,
\end{equation*}
which is a consequence of the following instance of the Pythagorean Theorem in 
$L^2(du)$:
\begin{equation*}
\int_\Omega u(f - a)^2 \, dx = \int_\Omega u(f - \bar f)^2 \, dx + (\bar f - 
a)^2
.
\end{equation*}

Actually we will prove a slightly stronger inequality than~\eqref{eq:_goal}, as 
stated in the following lemma.

\begin{lemma}
\label{lem:ineq-main-lemma}
Let $U$ be a uniformly integrable set of smooth probability measures on 
$\overline\Omega$; then there exist $\varkappa > 0$ and $\sigma > 0$ such that
\begin{equation}
\label{eq:goal}
\int_{[u \ge \sigma]} u(x)\left((f(x, u(x)) - a)^2
+ |\nabla f(x, u(x))|^2 \right) dx
\ge \varkappa a^2
\end{equation}
for all $u \in U$ and $a \in \mathbb R$.
\end{lemma}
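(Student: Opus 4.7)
My plan is to split the bound \eqref{eq:goal} into two regimes in $a$: for $|a|$ large enough the inequality follows from a direct pointwise estimate, while for bounded $|a|$ it rests on a compactness/contradiction argument.

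First, uniform integrability of $U$ together with $\int_\Omega u\,dx = 1$ lets me choose $\sigma>0$ and $K>\sigma$ depending only on~$U$ such that $\int_{\{\sigma\le u\le K\}} u\,dx \ge \tfrac12$ for every $u \in U$. On the compact set $\overline\Omega\times[\sigma,K]$ the function $f$ is bounded, say $|f(x,s)|\le M$, and by \eqref{eq:eep-f} one has $-f_u(x,s)\ge \delta>0$ there. For $|a|\ge 2M$ this immediately yields $|f-a|\ge |a|/2$ on $\{\sigma\le u\le K\}$, and hence
\[
\int_{\{u\ge\sigma\}} u(f-a)^2\,dx \;\ge\; \tfrac{a^2}{4}\int_{\{\sigma\le u\le K\}} u\,dx \;\ge\; \tfrac{a^2}{8},
\]
which is \eqref{eq:goal} with $\varkappa=1/8$ in this regime.

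For $|a|\le 2M$ I argue by contradiction. Assume the inequality fails: there exist $u_n \in U$ and $a_n \in [-2M,2M]\setminus\{0\}$ with $a_n^{-2}\!\int_{\{u_n\ge\sigma\}} u_n\bigl((f_n-a_n)^2+|\nabla f_n|^2\bigr)dx \to 0$, where $f_n := f(x,u_n(x))$. Passing to a subsequence, $a_n\to a_\infty\in[-2M,2M]$. Since $|a_n|$ stays bounded, the integral itself tends to $0$, so $f_n\to a_\infty$ in $L^2(u_n\,dx)$ restricted to the good set $\{\sigma\le u_n\le K\}$. Using $-f_u\ge\delta$ to invert $f(x,\cdot)$ pointwise, this promotes to $u_n(x)\to f^{-1}(x,a_\infty)$ on that set. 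Combined with uniform integrability outside the good set and the constraint $\int u_n\,dx = 1 = \int m\,dx$, the strict monotonicity of $f(x,\cdot)$ uniquely pins down $a_\infty = 0$ and forces $u_n \to m$. A first-order Taylor expansion then gives $f_n = f_u(x,\theta_n)(u_n-m)$ for an intermediate value $\theta_n$; dividing by $a_n$ and invoking $f_n/a_n\to 1$, the rescaled deviations $v_n := (u_n-m)/a_n$ converge to $1/f_u(x,m(x))$. But $\int_\Omega v_n\,dx = 0$ identically, while $\int_\Omega 1/f_u(x,m(x))\,dx < 0$ strictly by \eqref{eq:eep-f}, a contradiction.

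The main difficulty is this last step. The quantitative comparison $|u_n-m|\lesssim|f_n|/\delta$ lives on the good set $\{\sigma\le u_n\le K\}$, which carries asymptotically full $u_n$-mass but may leave a fixed Lebesgue fraction of $\Omega$ uncontrolled, so uniform integrability must be used quantitatively to ensure that the contribution of the complement to $\int_\Omega v_n\,dx$ is $o(1)$ even after dividing by the vanishing sequence $a_n$.
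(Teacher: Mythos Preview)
Your large-$|a|$ step is fine and is morally the same as the paper's Lemmas for $|a|\ge 2\delta$. The problem is the bounded-$|a|$ regime, and it is exactly the difficulty you flag in your last paragraph --- but it is a genuine gap, not a detail to be filled in later.

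The issue is that uniform integrability only yields bounds of fixed size on $\int_{G_n^c} u_n$ and $|G_n^c|$ (where $G_n=\{\sigma\le u_n\le K\}$), not bounds that shrink with $|a_n|$. Your linearisation needs $\int_{G_n^c} v_n\,dx=o(1)$, i.e.\ $\int_{G_n^c}|u_n-m|\,dx=o(|a_n|)$. On $\{u_n>K\}$ you can use that $(f_n-a_n)^2$ is bounded below there to get $|\{u_n>K\}|=o(a_n^2)$, but uniform integrability only turns this into $\int_{\{u_n>K\}}u_n\le\omega_U(o(a_n^2))$, which is not $o(|a_n|)$ without a rate on $\omega_U$. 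Worse, the set $\{u_n<\sigma\}$ lies \emph{outside} the region of integration in \eqref{eq:goal}, so the hypothesis gives you literally no direct control on its Lebesgue measure; yet $\int_{\{u_n<\sigma\}}(m-u_n)\,dx$ can be of order $|\{u_n<\sigma\}|$, and you need this to be $o(|a_n|)$. The same lacuna already bites one step earlier: your claim that the mass constraint ``pins down $a_\infty=0$'' uses $\int_{G_n}u_n\approx\int_{G_n}m_{a_\infty}$, but without control on $|G_n^c|$ this only yields $|1-\int_\Omega m_{a_\infty}|\le \sigma|\Omega|+\varepsilon$, which does not exclude small nonzero~$a_\infty$.

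What is missing is precisely a quantitative use of the gradient term: passing from $\{u_n\ge\sigma\}$ to $\{u_n<\sigma\}$ forces a transition layer inside $\{u_n\ge\sigma\}$ where $|\nabla f_n|$ is large, and this must be priced against $|\{u_n<\sigma\}|$. The paper does not argue by compactness at all; it obtains \eqref{eq:goal} by a direct case analysis in the sign and size of~$a$, and the key quantitative input (their Lemma~\ref{lem:ineq1}) bounds $\int_{[c_0<f<c_1]}u|\nabla f|^2$ from below via the coarea formula and the relative isoperimetric inequality, in terms of the measures of the sub- and super-level sets. This is exactly what converts ``there is a transition'' into an $a$-dependent lower bound, and it is the ingredient your contradiction argument lacks.
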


The proof is carried out in the subsequent lemmas.

Given a set~$M$ of integrable functions on~$\Omega$, let
\begin{equation*}
\omega_M(\delta)
= \sup \left \{
\int_A |u| \, dx \colon
u \in M, A \subset \Omega, |A| \le \delta
\right\}
\end{equation*}
be the modulus of integrability of~$M$.  Clearly, $\omega_M \colon [0, \infty) 
\to [0, \infty]$ is a nondecreasing function. Denote by
\begin{equation*}
\omega_M^- (t)
= \inf \{\delta \ge 0 \colon \omega_M(\delta) \ge t \}
\end{equation*}
its generalized inverse, cf. \cite{E13}.
Obviously,
\begin{equation*}
\text{$M$ is uniformly integrable} \Leftrightarrow \lim_{\delta \to 
+0}\omega_M(\delta) = 0
\Leftrightarrow \forall t > 0\colon \omega_M^- (t) > 0
.
\end{equation*}
\begin{remark}
\label{rem:ui}
Suppose that $f \to - \infty$ as $u \to \infty$ uniformly in~$x$.  Then if the 
entropy is bounded on~$U$, the set~$U$ is uniformly integrable.  This can be 
shown using a simple de la Vallée-Poussin argument.  First of all, note that 
by L'Hôpital's rule we have
\begin{equation*}
\lim_{u \to \infty} \frac{E(x, u)}{u}
=
\lim_{u \to \infty} (- f(x, u))
= \infty
,
\end{equation*}
where the limits are uniform in~$x$.  Given $\varepsilon > 0$ take $k > 0$ 
such that $u \le \varepsilon E(x, u)$ whenever $u \ge k$ and assume that $|A| 
\le \varepsilon$; then for any $u \in U$ we have
\begin{equation*}
\int_A u(x) \, dx
\le k|A| + \varepsilon \int_\Omega E(x, u(x)) \,dx
\le \left(k + \sup_{u \in U} \entropy(u)\right) \varepsilon
\end{equation*}
proving the uniform integrability.
\end{remark}

Given $c$, the equation
\begin{equation*}
f(x, \xi) = c
\end{equation*}
defines a positive function $m_c \in C(\overline \Omega)$, at least if $c$ is 
sufficiently close to~$0$. Clearly, $[u\geq m_c]$=$[f\le c]$, and similarly for other comparisons. 
\begin{remark}
\label{rem:ex-mc}
If $m_c$ exists for some $c > 0$, then $m_{c'}$ exists whenever $0 < c' \le 
c$; similarly, if $m_c$ exists for some $c < 0$, then $m_{c'}$ exists whenever 
$c < c' < 0$.
\end{remark}
\begin{remark}
It follows easily from the Mean Value Theorem that if $m_c$ exists for some $c 
> 0$, then
\begin{equation}
\label{eq:mc1}
\inf_\Omega (m - m_{c})
\ge
\frac{c}{\sup\limits_{m_c(x) \le \xi \le m(x)} | f_u(x, \xi) |}
,
\end{equation}
and if $m_c$ exists for some $c < 0$, then
\begin{equation}
\label{eq:mc2}
\inf_\Omega (m_{c} - m)
\ge
-
\frac{c}{\sup\limits_{m(x) \le \xi \le m_c(x)} | f_u(x, \xi) |}
.
\end{equation}
In the suprema above and in what follows we write $m_c(x) \le \xi \le m(x)$ 
for $\{(x, \xi) \colon m_c(x) \le \xi \le m(x)\}$, etc.  Clearly, the 
suprema in~\eqref{eq:mc1} and~\eqref{eq:mc2} are finite.
\end{remark}
\begin{remark}
\label{rem:ufu}
Note that
\begin{equation}
\label{eq:ufu}
\inf_{u > m} (uf)_u < 0
.
\end{equation}
Indeed, one only needs to observe that $(uf)_u = f + uf_u$ is uniformly 
negative both as $u \to m$ (since $m$ is uniformly positive and $f_u\Big|_{u = 
m}$ is uniformly negative) and as $u \to \infty$ (since so is $f$).
\end{remark}
\begin{lemma}
\label{lem:ineq62}
Suppose that $m_c$ exists for some $c > 0$; then for any $u \in U$ we have
\begin{equation}
\label{eq:ineq6a}
\int_{[m_{c} < u < m]} (m - u) \, dx
\le
\frac{1}{\inf\limits_{m_{c}(x) \le \xi \le m(x)}|f_u(x, \xi)|}
\int_{[m_c < u < m]} f \, dx
;
\end{equation}
likewise, if~$m_c$ exists for some $c < 0$, then
\begin{equation}
\label{eq:ineq6b}
\int_{[m < u < m_{c}]} (u - m) \, dx
\le
\frac{1}{\inf\limits_{m(x) \le \xi \le m_{c}(x)}|f_u(x, \xi)|}
\int_{[m < u < m_c]} f \, dx
.
\end{equation}
\end{lemma}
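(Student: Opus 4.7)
The plan is to obtain both bounds by applying the Mean Value Theorem pointwise in $x$ and then integrating. The essential input is that $f(x,m(x))=0$ for every $x\in\Omega$, which follows from the assumptions: since $E(x,\cdot)\ge 0$, $E(x,m(x))=0$, and $E_u=-f$, the function $E(x,\cdot)$ attains its minimum at $u=m(x)$, forcing $f(x,m(x))=0$.

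For the first inequality, fix any $x$ with $m_c(x)<u(x)<m(x)$. Applying the Mean Value Theorem to $\xi\mapsto f(x,\xi)$ on the interval $[u(x),m(x)]$ and using $f(x,m(x))=0$, I would produce $\xi_x\in(u(x),m(x))$ with
\begin{equation*}
f(x,u(x)) \;=\; f(x,u(x))-f(x,m(x)) \;=\; f_u(x,\xi_x)\,(u(x)-m(x)).
\end{equation*}
Since $f_u<0$, this rearranges to
\begin{equation*}
m(x)-u(x) \;=\; \frac{f(x,u(x))}{|f_u(x,\xi_x)|} \;\le\; \frac{f(x,u(x))}{\inf_{m_c(x)\le\xi\le m(x)}|f_u(x,\xi)|},
\end{equation*}
where I have used $\xi_x\in[u(x),m(x)]\subset[m_c(x),m(x)]$. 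Note also that $f(x,u(x))>0$ on this set, because $f_u<0$ and $u<m$. Replacing the pointwise infimum by the global infimum over the full set $\{(x,\xi):m_c(x)\le\xi\le m(x)\}$ (which is positive and finite, as remarked after \eqref{eq:mc2}) only weakens the estimate, and integrating over $[m_c<u<m]$ gives \eqref{eq:ineq6a}.

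The second inequality is entirely symmetric. When $c<0$, one has $m_c>m$ pointwise, and on $[m<u<m_c]$ the Mean Value Theorem on $[m(x),u(x)]$ yields $\xi_x\in(m(x),u(x))\subset[m(x),m_c(x)]$ such that $f(x,u(x))=f_u(x,\xi_x)(u(x)-m(x))$; dividing by $f_u(x,\xi_x)<0$, bounding $|f_u(x,\xi_x)|$ below by the global infimum, and integrating gives \eqref{eq:ineq6b}. There is no genuine obstacle in this lemma; the only subtlety is choosing the interval of the MVT so that the resulting intermediate point lies in the range over which the infimum of $|f_u|$ is taken on the right-hand side.
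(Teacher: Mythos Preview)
Your approach is exactly the one the paper has in mind: the paper's proof reads in full ``Both inequalities are easy consequences of the Mean Value Theorem if we take into account that $f(x,\xi)=0$ when $\xi=m(x)$,'' and your argument for \eqref{eq:ineq6a} spells this out correctly.

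There is, however, a sign slip in your handling of \eqref{eq:ineq6b}. On the set $[m<u<m_c]$ one has $u>m$ and hence $f(x,u(x))<0$; carrying out your own computation, from $f(x,u(x))=f_u(x,\xi_x)(u(x)-m(x))$ with $f_u<0$ you obtain
\[
u(x)-m(x)=\frac{-f(x,u(x))}{|f_u(x,\xi_x)|}=\frac{|f(x,u(x))|}{|f_u(x,\xi_x)|},
\]
so after bounding and integrating the right-hand side carries $|f|$ (equivalently $-f$), not $f$. As literally written, \eqref{eq:ineq6b} would assert that a nonnegative quantity is bounded by a nonpositive one, which is false in general; this is evidently a typo in the statement (note that \eqref{eq:ineq6b} is never invoked elsewhere in the paper, while \eqref{eq:ineq6a} is). Your method is right, but ``dividing by $f_u<0$ \ldots gives \eqref{eq:ineq6b}'' skips the step where the sign would have been caught.
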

\begin{proof}
Both inequalities are easy consequences of the Mean Value Theorem if we take 
into account that $f(x, \xi) = 0$ when $\xi = m(x)$.
\end{proof}

\begin{lemma}
\label{lem:ineq2}
Suppose that $m_c$ is defined for some $c > 0$; then for any $u \in U$ we have
\begin{equation}
\label{eq:ineq2}
\big| [u > m] \big|
\ge
\omega_U^- \left(\inf_\Omega (m - m_c) \big| [ u \le m_c] \big| \right)
.
\end{equation}
\end{lemma}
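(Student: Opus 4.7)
The plan is to exploit mass conservation together with the definition of the generalized inverse of the modulus of integrability. Since both $u$ and $m$ are probability densities on $\Omega$, we have $\int_\Omega (u-m)\,dx = 0$, so
\begin{equation*}
\int_{[u > m]} (u - m)\,dx \;=\; \int_{[u \le m]} (m - u)\,dx.
\end{equation*}
This identity is what converts a lower bound for the ``deficit'' of $u$ below $m$ into a lower bound for its ``excess'' above $m$.

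Next I would estimate the right-hand side from below by restricting to the smaller set $[u \le m_c]$. Since $f_u < 0$ and $f(x,m_c(x)) = c > 0 = f(x,m(x))$, we have $m_c < m$ pointwise on $\overline\Omega$, and the constant $\inf_\Omega (m - m_c)$ is strictly positive (cf.~\eqref{eq:mc1}). On $[u \le m_c]$ the integrand $m - u$ dominates $m - m_c$, so
\begin{equation*}
\int_{[u \le m]} (m - u)\,dx \;\ge\; \int_{[u \le m_c]} (m - u)\,dx \;\ge\; \inf_\Omega(m - m_c)\cdot \bigl|[u \le m_c]\bigr|.
\end{equation*}
Combining with the previous identity and using $u - m \le u$ on $[u > m]$ gives
\begin{equation*}
\int_{[u > m]} u\,dx \;\ge\; \inf_\Omega (m - m_c)\cdot \bigl|[u \le m_c]\bigr| \;=:\; \alpha.
\end{equation*}

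Finally, set $A := [u > m]$. Since $u \in U$, by the very definition of $\omega_U$ we have $\int_A u\,dx \le \omega_U(|A|)$, so the previous display gives $\omega_U(|A|) \ge \alpha$. By the definition of the generalized inverse, any $\delta$ with $\omega_U(\delta) \ge \alpha$ lies in the set whose infimum is $\omega_U^-(\alpha)$, so $|A| \ge \omega_U^-(\alpha)$, which is precisely \eqref{eq:ineq2}.

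There is essentially no technical obstacle here: the argument is a two-line rearrangement using mass conservation followed by the definition of $\omega_U^-$. The only point to be slightly careful about is the treatment of the boundary sets $\{u = m\}$ and $\{u = m_c\}$, but the mass-balance identity is insensitive to the distinction between strict and non-strict inequalities, and the positivity of $\inf_\Omega(m - m_c)$ guaranteed by \eqref{eq:mc1} ensures that $\alpha > 0$ whenever $|[u \le m_c]| > 0$.
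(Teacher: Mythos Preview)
Your proof is correct and follows essentially the same route as the paper: both arguments establish the key inequality $\int_{[u>m]}(u-m)\,dx \ge \inf_\Omega(m-m_c)\,\bigl|[u\le m_c]\bigr|$ via mass conservation, then use positivity of $m$ to pass to $\int_{[u>m]} u\,dx$ and invoke the definition of $\omega_U$ and its generalized inverse. The only cosmetic difference is that the paper reaches the key inequality through a three-set partition $[u\le m_c]\cup[m_c<u\le m]\cup[u>m]$ and a little algebra, whereas you invoke the identity $\int_{[u>m]}(u-m)=\int_{[u\le m]}(m-u)$ directly and restrict to $[u\le m_c]$; the two are equivalent.
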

\begin{proof}
We have:
\begin{align*}
1 & =
\int_{[u \le m_c]} u \, dx
+ \int_{[m_c < u \le m]} u \, dx
+ \int_{[u > m]} u \, dx
\\
& \le
\int_{[u \le m_c]} m_c \, dx
+ \int_{[m_c < u \le m]} m \, dx
+ \int_{[u > m]} u \, dx
\\
& = \int_{[u > m]} ( u - m) \, dx
- \int_{[u \le m_c]}( m - m_c) \, dx
+ \int_\Omega m \, dx
.
\end{align*}
The last integral equals~1, so
\begin{equation*}
\int_{[u > m]} ( u - m) \, dx
\ge
\int_{[u \le m_c]}( m - m_c) \, dx
\ge
\inf_\Omega (m - m_c) \big|[u \le m_c] \big|
.
\end{equation*}
Now using the positivity of $m$ we deduce
\begin{equation*}
\omega_U\left(\big|[u > m] \big| \right)
\ge
\int_{[u > m]} u \, dx
\ge
\int_{[u > m]} ( u - m) \, dx
\ge
\inf_\Omega (m - m_c) \big|[u \le m_c] \big|
,
\end{equation*}
and~\eqref{eq:ineq2} follows, observing that $\omega_U^-({\omega_U}(s)) \le s$. 
\end{proof}

\begin{lemma}
\label{lem:ineq2.5}
Suppose that $m_c$ is defined for some $c < 0$; then for any $u \in U$ we have
\begin{equation}
\label{eq:ineq2.5}
\big| [u < m] \big|
\ge
\frac{\inf_\Omega (m_c - m)}{\sup_\Omega m} \big| [ u \ge m_c] \big|
.
\end{equation}
\end{lemma}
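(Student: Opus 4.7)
The plan is to mirror the proof of Lemma \ref{lem:ineq2}, but this time use the mass conservation $\int_\Omega u\,dx = 1 = \int_\Omega m\,dx$ rather than a one-sided estimate against $\omega_U^-$, because in the regime $c < 0$ the relevant set $[u \ge m_c]$ is a subset of $[u > m]$ where $u$ dominates $m$, so mass can be controlled directly without invoking uniform integrability.

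First I would record the sign information: since $f_u < 0$ and $f(x, m(x)) = 0$, taking the level set $f(x, \xi) = c$ for $c < 0$ forces $m_c(x) > m(x)$ on $\overline\Omega$, so $[u \ge m_c] \subset [u > m]$ and in particular $[u \ge m_c]$ is disjoint from $[u < m]$. Next, using the two probability normalizations, write
\begin{equation*}
0 = \int_\Omega (u - m)\,dx
= \int_{[u \ge m]} (u - m)\,dx - \int_{[u < m]} (m - u)\,dx,
\end{equation*}
so that
\begin{equation*}
\int_{[u < m]} (m - u)\,dx = \int_{[u \ge m]} (u - m)\,dx.
\end{equation*}

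Now I would bound the two sides separately. On the left, since $0 \le m - u \le m \le \sup_\Omega m$ on $[u < m]$, we get
\begin{equation*}
\int_{[u < m]} (m - u)\,dx \le (\sup_\Omega m)\,\big|[u < m]\big|.
\end{equation*}
On the right, restricting integration to the smaller set $[u \ge m_c] \subset [u \ge m]$ and using $u \ge m_c$ there,
\begin{equation*}
\int_{[u \ge m]}(u - m)\,dx
\ge \int_{[u \ge m_c]}(u - m)\,dx
\ge \int_{[u \ge m_c]}(m_c - m)\,dx
\ge \inf_\Omega(m_c - m)\,\big|[u \ge m_c]\big|.
\end{equation*}
Chaining the two estimates and dividing by $\sup_\Omega m > 0$ yields \eqref{eq:ineq2.5}.

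I do not anticipate any real obstacle: unlike Lemma \ref{lem:ineq2}, we do not need to invoke $\omega_U^-$ since on $[u < m]$ the integrand $m - u$ is automatically controlled by $m$ itself, pointwise. The only thing to keep straight is the direction of the inequality $m_c > m$ forced by $c < 0$ together with $f_u < 0$, which is exactly the mirror image of the situation in Lemma \ref{lem:ineq2}.
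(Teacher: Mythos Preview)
Your proof is correct and matches the paper's approach essentially line for line: the paper also obtains $\int_{[u < m]}(m-u)\,dx \ge \int_{[u \ge m_c]}(m_c - m)\,dx \ge \inf_\Omega(m_c - m)\,\big|[u \ge m_c]\big|$ by mimicking Lemma~\ref{lem:ineq2}, and then bounds $\int_{[u < m]}(m-u)\,dx \le \sup_\Omega m\,\big|[u < m]\big|$ using $u \ge 0$. The only cosmetic difference is that the paper phrases the first step via the three-set partition of Lemma~\ref{lem:ineq2}, whereas you go through the balance identity $\int_{[u<m]}(m-u) = \int_{[u\ge m]}(u-m)$ directly; these are the same computation.
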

\begin{proof}
Mimicking the proof of Lemma~\ref{lem:ineq2}, we obtain
\begin{equation*}
\int_{[u < m]} ( m - u) \, dx
\ge
\int_{[u \ge m_c]}( m_c - m) \, dx
\ge
\inf_\Omega (m_c - m) \big|[u \ge m_c] \big|
.
\end{equation*}
On the other hand, as $u$ is nonnegative, we have
\begin{equation*}
\int_{[u < m]} ( m - u) \, dx \le \sup_\Omega m \big| [u < m] \big|
,
\end{equation*}
and~\eqref{eq:ineq2.5} follows.
\end{proof}

\begin{lemma}
\label{lem:ineq1}
Let $c_0 < c_1$ and suppose that $m_{c_1}$ is defined; then for any $u \in U$ 
we have
\begin{equation}
\label{eq:ineq1}
\int_{[c_0 < f < c_1]} u | \nabla f|^2 \, dx
\ge
\frac{C_\Omega^2}{|\Omega|}
(c_1 - c_0)^2
\inf_\Omega m_{c_1}
\min
\left(
\big |[ f \le c_0 ] \big|,
\big |[ f \ge c_1 ] \big|
\right)^{2(d-1)/d}
.
\end{equation}
\end{lemma}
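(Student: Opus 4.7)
The plan is to reduce the lemma to a purely geometric estimate on level sets of $f$, by first extracting the $u$-factor and then applying the relative isoperimetric inequality on $\Omega$ together with the co-area formula.

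First I exploit the monotonicity hypothesis \eqref{eq:eep-f} together with the defining equation $f(x, m_{c_1}(x)) = c_1$: since $\xi \mapsto f(x, \xi)$ is strictly decreasing, $[f \le c_1]$ coincides with $[u \ge m_{c_1}]$. In particular, on the band $[c_0 < f < c_1]$ one has pointwise $u(x) \ge m_{c_1}(x) \ge \inf_\Omega m_{c_1} > 0$. Pulling this constant out of the integral reduces the lemma to the bound
\begin{equation*}
\int_{[c_0 < f < c_1]} |\nabla f|^2 \, dx \ge \frac{C_\Omega^2 (c_1 - c_0)^2}{|\Omega|}\, \min\bigl(|[f \le c_0]|, |[f \ge c_1]|\bigr)^{2(d-1)/d},
\end{equation*}
which no longer involves $u$ except through the level sets of $f$.

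For this, I would introduce the Lipschitz truncation $g := \max(c_0, \min(f, c_1))$. It is constant equal to $c_0$ on $[f \le c_0]$ and to $c_1$ on $[f \ge c_1]$, and $\nabla g = \nabla f$ on $[c_0 < f < c_1]$ while $\nabla g = 0$ elsewhere. Cauchy-Schwarz over the support of $\nabla g$ (which lies inside $\Omega$) yields
\begin{equation*}
\int_\Omega |\nabla g|^2 \, dx \ge \frac{1}{|\Omega|}\Bigl(\int_\Omega |\nabla g|\, dx\Bigr)^2,
\end{equation*}
so the task reduces to a lower bound for $\int_\Omega |\nabla g|\, dx$. By the co-area formula, $\int_\Omega |\nabla g|\, dx = \int_{c_0}^{c_1} \mathrm{Per}_\Omega([f > t])\, dt$. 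For each $t \in (c_0, c_1)$, monotonicity of super- and sublevel sets gives $[f \ge c_1] \subset [f > t]$ and $[f \le c_0] \subset [f \le t]$, whence $\min(|[f > t]|, |[f \le t]|) \ge \min(|[f \ge c_1]|, |[f \le c_0]|)$. The relative isoperimetric inequality on the smooth bounded domain (or torus) $\Omega$, with constant $C_\Omega$, then furnishes
\begin{equation*}
\mathrm{Per}_\Omega([f > t]) \ge C_\Omega\, \min\bigl(|[f \ge c_1]|, |[f \le c_0]|\bigr)^{(d-1)/d}
\end{equation*}
uniformly in $t \in (c_0, c_1)$. Integrating in $t$ produces a factor $c_1 - c_0$, and substitution into the Cauchy-Schwarz bound gives the required estimate after squaring.

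The argument is essentially one application of co-area combined with relative isoperimetry, so no substantive obstacle is anticipated. The only conceptual point is the initial reduction: one must notice that on the band $[c_0 < f < c_1]$ the density $u$ is pointwise bounded below by $m_{c_1}$, which allows the weight $u$ to be replaced by a constant and the problem to be recast as a purely geometric inequality independent of $u$. Everything else amounts to careful book-keeping of the constant $C_\Omega$, which the lemma identifies with the relative isoperimetric constant of $\Omega$.
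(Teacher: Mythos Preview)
Your proof is correct and follows essentially the same route as the paper: bound $u$ below by $\inf_\Omega m_{c_1}$ on the band, pass from $L^2$ to $L^1$ via Cauchy--Schwarz, and combine the coarea formula with the relative isoperimetric inequality. The only cosmetic difference is that you package the coarea step through the truncation $g=\max(c_0,\min(f,c_1))$, whereas the paper applies coarea to $f$ directly on the band and then argues that for $t\in(c_0,c_1)$ the reduced boundary of $[f<t]$ already lies in $[c_0<f<c_1]$, so the relative perimeter can be taken in $\Omega$; both maneuvers yield the same integral $\int_{c_0}^{c_1} P([f>t];\Omega)\,dt$.
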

\begin{proof}
By monotonicity of $f$ we have $u \ge m_{c_1}$ on $[c_0 < f < c_1]$, so
\begin{align}
\int_{[c_0 < f < c_1]} u | \nabla f|^2 \, dx
& \ge
\inf_\Omega m_{c_1}
\int_{[c_0 < f < c_1]} | \nabla f|^2 \,dx
\notag
\\
& \ge
|\Omega|^{-1}
\inf_\Omega m_{c_1}
\left(\int_{[c_0 < f < c_1]} | \nabla f| \,d x \right)^2
.
\label{eq:tmp1}
\end{align}
In what follows, we use some basic results and concepts from the geometric 
measure theory, which can be found in \cite{Mag12}.  In particular,
the relative perimeter of a Lebesgue measurable set $A$ of locally finite 
perimeter with respect to $\Omega$
is defined as
\begin{equation*}
P(A; \Omega) = |\mu_A| (\Omega)
,
\end{equation*}
where $\mu_A:=\nabla 1_A$ is the Gauss-Green measure associated with~$A$. 
The support of~$\mu_A$ is contained in the topological boundary 
of~$A$.

Using the coarea formula, we have
\begin{align*}
\int_{[c_0 < f < c_1]} | \nabla f| \,d x
& =
\int_{-\infty}^{\infty} P\big( [f < t]; [c_0 < f < c_1] \big) \,d t
\\
& \ge
\int_{c_0}^{c_1} P\big( [f < t]; [c_0 < f < c_1] \big) \,d t
.
\end{align*}
The support of the Gauss--Green measure $\mu_{[f < t]}$ is contained in the 
topological boundary of the set $[f < t]$, so if $c_0 < t < c_1$, we see that 
the intersection of the support with $\Omega$ lies in $[c_0 < f < c_1]$.  
Consequently, we can take relative perimeter with respect to~$\Omega$ and 
proceed using the relative isoperimetric inequality (see, e.g., \cite{Mazja}) as follows:
\begin{align*}
\int_{[c_0 < f < c_1]} | \nabla f| \,d x
& \ge
\int_{c_0}^{c_1} P\big( [f < t]; \Omega \big) \,d t
\\
& \ge
C_\Omega \int_{c_0}^{c_1} \min \big( \big|[f < t]\big|, \big|[f \ge t]\big| 
\big)
^{(d-1)/d} \,d t
.
\end{align*}
The integrand can be estimated using the obvious inclusions
\begin{equation*}
[f < t] \supset [f \le c_0], \quad [f \ge t] \supset [f \ge c_1] \quad (c_0 < 
t < c_1)
,
\end{equation*}
and thus
\begin{equation*}
\int_{[c_0 < f < c_1]} | \nabla f| \,d x
\ge
C_\Omega (c_1 - c_0) \min \big( \big|[f \le c_0 ]\big|, \big|[f \ge c_1]\big| 
\big)
^{(d-1)/d}
.
\end{equation*}
Combining this with~\eqref{eq:tmp1}, we obtain~\eqref{eq:ineq1}. \end{proof}

\begin{lemma}
\label{lem:ineq3}
Let $c_0 < 0$ and $c_1 > 0$ and suppose that $m_{c_i}$ ($i = 0, 1$) are 
defined; then for any $u \in U$ we have
\begin{multline}
\label{eq:lemA}
\int_{[0 < f < c_1]} f \,dx \ge
\inf_{m_{c_1}(x) \le \xi \le m(x)}|f_u(x, \xi)|
\left(
-
\frac{c_0\big| [ u \ge m_{c_0} ] \big|}{\sup\limits_{m(x) \le \xi \le 
m_{c_0}(x)}| f_u(x, \xi) |}
-
\sup_\Omega m \big| [ u \le m_{c_1} ] \big|
\right)
\end{multline}
\end{lemma}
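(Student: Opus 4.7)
The plan is to combine a Mean Value Theorem estimate for the integrand with a mass-balance identity, in the same spirit as Lemmas~\ref{lem:ineq2} and~\ref{lem:ineq2.5}.

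First I note that by the strict monotonicity of $f$ in $u$ coming from~\eqref{eq:eep-f}, the set $[0<f<c_1]$ coincides with $[m_{c_1}<u<m]$. Applying the Mean Value Theorem to $f(x,\cdot)$ between $u(x)$ and $m(x)$, and using $f(x,m(x))=0$, yields
\[
f(x,u(x)) = |f_u(x,\xi)|\,(m(x)-u(x))
\]
for some intermediate $\xi\in[m_{c_1}(x),m(x)]$. Setting $K_1:=\inf_{m_{c_1}(x)\le\xi\le m(x)}|f_u(x,\xi)|$ and integrating delivers the pointwise reduction
\[
\int_{[0<f<c_1]} f\,dx \;\ge\; K_1\int_{[m_{c_1}<u<m]}(m-u)\,dx,
\]
so the task is reduced to bounding the latter integral from below.

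Next I invoke mass conservation. Since $u$ and $m$ are both probability densities, $\int_\Omega(u-m)\,dx=0$, so $\int_{[u>m]}(u-m)\,dx=\int_{[u<m]}(m-u)\,dx$. Splitting $[u<m]=[m_{c_1}<u<m]\cup[u\le m_{c_1}]$ and rearranging,
\[
\int_{[m_{c_1}<u<m]}(m-u)\,dx = \int_{[u>m]}(u-m)\,dx - \int_{[u\le m_{c_1}]}(m-u)\,dx.
\]
For the first integral on the right-hand side, I restrict to the sub-region $[u\ge m_{c_0}]$ (the integrand is nonnegative on the complementary piece $[m<u<m_{c_0}]$, so dropping it is lossless for a lower bound) and use $u-m\ge m_{c_0}-m\ge\inf_\Omega(m_{c_0}-m)$, which by~\eqref{eq:mc2} is at least $-c_0/K_0$, where $K_0:=\sup_{m(x)\le\xi\le m_{c_0}(x)}|f_u(x,\xi)|$. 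For the subtracted integral, I use the crude pointwise bound $m-u\le m\le\sup_\Omega m$. Combining the two estimates and multiplying through by $K_1$ produces precisely the right-hand side of~\eqref{eq:lemA}.

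The argument is essentially bookkeeping: the mass balance $\int_\Omega(u-m)\,dx=0$ converts a bound on $\int f\,dx$ near the equilibrium into a comparison of the level sets far from the equilibrium. I do not foresee any serious technical obstacle; the only delicate point is keeping track of signs and verifying that each appeal to the Mean Value Theorem uses an intermediate $\xi$ in the range over which the corresponding supremum or infimum of $|f_u|$ is taken.
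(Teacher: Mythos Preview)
Your proof is correct and follows essentially the same route as the paper's: both combine the mass-balance identity $\int_{[u>m]}(u-m)=\int_{[u<m]}(m-u)$ with the Mean Value estimates from~\eqref{eq:mc2} and Lemma~\ref{lem:ineq62}, together with the crude bound $m-u\le\sup_\Omega m$ on $[u\le m_{c_1}]$. The only cosmetic difference is that you begin with the MVT reduction $\int_{[0<f<c_1]}f\ge K_1\int_{[m_{c_1}<u<m]}(m-u)$ and then invoke the mass balance, whereas the paper starts from the mass balance and applies~\eqref{eq:ineq6a} (the equivalent inequality) at the end.
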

\begin{proof}
Since $u$ and $m$ are probability measures, we have
\begin{equation}
\label{eq:tmp4}
\int_{[u > m]} (u - m) \, dx = \int_{[u < m]} (m - u) \,dx
,
\end{equation} Let us estimate the sides of \eqref{eq:tmp4}.

For the left-hand side, we have
\begin{align*}
\int_{[u > m]} (u - m) \, dx
& \ge
\int_{[u \ge m_{c_0}]} (u - m) \, dx
\notag
\\
& \ge
\inf_\Omega (m_{c_0} - m) \big| [ u \ge m_{c_0} ] \big|
\notag
\\
& \ge
-
\frac{c_0\big| [ u \ge m_{c_0} ] \big|}{\sup\{ | f_u(x, \xi) | \colon m(x) \le 
\xi \le m_{c_0}(x)\}}
,
\end{align*}
where we have used~\eqref{eq:mc2}; for the right-hand side we have
\begin{align*}
\int_{[u < m]} (m - u) \, dx
& =
\int_{[u \le m_{c_1}]} (m - u) \, dx
+
\int_{[m_{c_1} < u < m]} (m - u) \, dx
\\
& \le
\sup_\Omega m \big| [ u \le m_{c_1} ] \big|
+
\frac{1}{\inf\limits_{m_{c_1}(x) \le \xi \le m(x)}|f_u(x, \xi)| \}}
\int_{[m_{c_1} < u < m]} f \, dx
,
\end{align*}
where we have used~\eqref{eq:ineq6a}.  Comparing the estimates, we arrive 
at~\eqref{eq:lemA}.
\end{proof}

Now we are in the position to prove Lemma~\ref{lem:ineq-main-lemma} for small 
negative $a$.

\begin{lemma}
\label{lem:ineq4}
Suppose that $m_c$ exists for $|c| \le \delta$; then there exist $a_\delta \in 
(-\delta, 0)$ and $\varkappa_\delta > 0$ such that~\eqref{eq:goal} holds for 
all $a \in (a_\delta, 0)$ and $u \in U$ with $\varkappa = \varkappa_\delta$ 
and any positive $\sigma \le \inf_\Omega m_\delta$.
\end{lemma}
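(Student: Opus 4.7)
The plan is to prove \eqref{eq:goal} for small negative $a$ by a case analysis on the sizes of the level sets $[f \le a]$ and $[f \ge 0]$, applying Lemma~\ref{lem:ineq1} to extract the $|\nabla f|^2$ contribution and using mass balance together with Jensen's inequality for the $(f-a)^2$ contribution.

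First I observe that, since $a \in (-\delta,0)$ and $\sigma \le \inf_\Omega m_\delta \le \inf_\Omega m$, the middle band $[a < f < 0] = [m < u < m_a]$ lies inside $[u > m] \subset [u \ge \sigma]$. Applying Lemma~\ref{lem:ineq1} with $c_0 = a$, $c_1 = 0$ (so that $m_{c_1} = m$ and $\inf_\Omega m_{c_1} = \inf_\Omega m > 0$) yields
\[
\int_{[u \ge \sigma]} u|\nabla f|^2 \, dx \;\ge\; C_1\, a^2 \min\bigl(|[f \le a]|,\, |[f \ge 0]|\bigr)^{2(d-1)/d},
\]
with $C_1 = C_\Omega^2 \inf_\Omega m / |\Omega|$. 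I then fix a threshold $\beta > 0$ depending only on $\delta$ and $U$. If both $|[f \le a]| \ge \beta$ and $|[f \ge 0]| \ge \beta$, this inequality gives \eqref{eq:goal} directly with $\varkappa = C_1 \beta^{2(d-1)/d}$.

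It remains to handle the two ``unbalanced'' cases. When $|[f \ge 0]| < \beta$, uniform integrability gives $\int_{[u \le m]} u\, dx \le \omega_U(\beta)$, so $u$ concentrates on $\{u > m\} = \{f < 0\}$. Combining this with the mass balance $\int(u - m)\, dx = 0$ and estimate \eqref{eq:mc2}, one controls $|[u \ge m_a]|$ and shows that $\bar f := \int u f\, dx$ must be bounded away from $a$: either $(\bar f - a)^2 \ge \varkappa a^2$ directly, so Jensen's inequality $\int u(f-a)^2 \, dx \ge (\bar f - a)^2$ finishes, or the variance $\int u(f - \bar f)^2 \, dx$ is bounded below by $\varkappa a^2$. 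The symmetric case $|[f \le a]| < \beta$ is treated similarly using Lemma~\ref{lem:ineq2.5} and the bound \eqref{eq:mc1}.

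The main obstacle is making the $a^2$ scaling explicit in the unbalanced cases: the bounds in Lemmas~\ref{lem:ineq2.5}--\ref{lem:ineq3} involve the quantities $\inf_\Omega(m - m_c)$ and $\inf_\Omega(m_c - m)$ which are only of order $|c|$, so an extra factor of $|a|$ must be carefully tracked through the estimates. Choosing the auxiliary cutoff $c$ of order $|a|$ while $\beta$ is fixed produces the desired $a^2$ factor, but the resulting bounds are only useful once $|a|$ is small enough compared to $\beta$; this determines $a_\delta$, and the final $\varkappa_\delta$ is then obtained as the minimum of the constants arising in each of the three cases.
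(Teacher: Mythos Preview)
Your balanced case is fine: Lemma~\ref{lem:ineq1} on $[a < f < 0]$ produces the factor~$a^2$ directly once both $|[f \le a]|$ and $|[f \ge 0]|$ exceed a fixed threshold~$\beta$. The problem is entirely in the unbalanced cases, where what you wrote is a plan rather than an argument, and the plan does not close.

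Take the case $|[f \ge 0]| < \beta$. The control you extract from mass balance and~\eqref{eq:mc2} is
\[
|[u \ge m_a]| \;\le\; \frac{\sup_\Omega m}{\inf_\Omega(m_a - m)}\,\beta \;\lesssim\; \frac{\beta}{|a|},
\]
because \eqref{eq:mc2} only gives $\inf_\Omega(m_a - m) \gtrsim |a|$. This blows up as $a \to 0$ and says nothing useful. Jensen does not rescue you either: if $\bar f$ happens to lie in $(a_\delta,0)$ (nothing prevents this), then for $a = \bar f$ you have $(\bar f - a)^2 = 0$, so you must control the variance $\int u(f-\bar f)^2$ from below by $\varkappa a^2$, and your outline supplies no mechanism for that. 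The symmetric case $|[f \le a]| < \beta$ has the same defect: Lemma~\ref{lem:ineq2.5} applied with $c = a$ gives $|[u<m]| \gtrsim |a|\,|[u\ge m_a]|$, which is vacuous when $|[u \ge m_a]|$ is small.

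The paper's decomposition is structurally different. It partitions at a \emph{fixed} upper level~$\delta$ and an $a$-dependent lower level~$a/2$:
\[
\Omega = [f \ge \delta] \cup [a/2 < f < \delta] \cup [f \le a/2].
\]
If either of the first two sets is large one finishes easily (using Lemma~\ref{lem:ineq2} and then Lemma~\ref{lem:ineq1} on the fixed-width band $[0<f<\delta]$, or the pointwise bound $(f-a)^2 \ge a^2/4$). The genuinely hard case is $|[f \le a/2]| \ge |\Omega|/3$ with $s := |[f \ge \delta]|$ small. Here the paper introduces a further dichotomy $s \gtrless |a|^{d/(d-1)}$: if $s \ge |a|^{d/(d-1)}$, Lemma~\ref{lem:ineq1} on $[a/2 < f < \delta]$ (a band of width~$\approx \delta$, not~$|a|$) gives $\gtrsim s^{2(d-1)/d} \ge a^2$; if $s < |a|^{d/(d-1)}$, Lemma~\ref{lem:ineq3} forces $\int_{[0<f<\delta]} f \gtrsim |a|$, and then Cauchy--Schwarz on $[0<f<\delta]$ converts this into $\int u(f-a)^2 \gtrsim a^2$. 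This last step---using mass balance via Lemma~\ref{lem:ineq3} to produce an $L^1$ lower bound of order~$|a|$ on~$f$ over the \emph{positive} band---is the missing idea in your sketch.
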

\begin{proof}
Fix $u \in U$, $\sigma \le \inf_\Omega m_\delta$, and $a \in (a_\delta, 0)$, 
the constant $a_\delta$ to be defined below.  We examine the possible 
alternatives and in each of them, we find a suitable value for 
$\varkappa_\delta$.

Observe that in~$\Omega$,
\begin{equation*}
f < \delta \Leftrightarrow u > m_\delta \Rightarrow u > \sigma
.
\end{equation*}

Consider the following partition of~$\Omega$:
\begin{equation}
\Omega = [f \ge \delta] \cup [ a/2 < f < \delta ] \cup [f \le a/2]
.
\end{equation}
Clearly, at least one set on the right-hand side has volume $\ge |\Omega|/3$.

If $\big|[f \ge \delta] \big| \ge |\Omega|/3$, it follows from 
Lemma~\ref{lem:ineq2} that $\big|[f \le 0]\big| \ge \sigma_\delta$ with 
$\sigma_\delta > 0$ independent of~$u$ and~$a$.  Then Lemma~\ref{lem:ineq1} 
guarantees the estimate
\begin{equation*}
\int_{[u > \sigma]} u | \nabla f|^2 \, dx
\ge
\int_{[0 < f < \delta]} u | \nabla f|^2 \, dx
\ge C_\delta
\ge
\frac{C_\delta}{\delta^2} a^2
\end{equation*}
with $C_\delta > 0$ independent of~$u$ and~$a$, so~\eqref{eq:goal} holds with 
$\varkappa = \varkappa_\delta' := C_\delta/\delta^2$.

If $\big| [a/2 < f < \delta ] \big| \ge |\Omega|/3$, we have the following 
simple lower bound on the first term on the left-hand side 
of~\eqref{eq:goal}:
\begin{align*}
\int_{[u > \sigma]} u (f - a)^2 \, dx
& \ge
\int_{[m_\delta < u < m_{a/2}]} u (f - a)^2 \, dx
\\
& \ge
\frac{\inf_\Omega m_\delta}{|\Omega|}
\left(
\int_{[a/2 < f < \delta]} (f - a) \, dx
\right)^2
\\
& \ge
\frac{\inf_\Omega m_\delta}{4|\Omega|}
\big| [a/2 < f < \delta ] \big|^2 a^2
\\
& \ge
\frac{|\Omega|\inf_\Omega m_\delta}{36}
a^2
=: \varkappa_\delta'' a^2
,
\end{align*}
so~\eqref{eq:goal} holds with $\varkappa = \varkappa_\delta''$.

It remains to assume that $\big|[ f \le a/2] \big| \ge |\Omega|/3$ and $s := 
\big| [f \ge \delta ] \big| < |\Omega|/3$.  Using Lemma~\ref{lem:ineq1} with 
$c_1 = \delta$ and $c_0 = a/2$, we obtain
\begin{equation*}
\int_{[a/2 < f < \delta]} u |\nabla f|^2 \, dx
\ge C_\delta s^{2(d-1)/d}
.
\end{equation*}
Of course, the right-hand side is a lower bound for the left-hand side 
of~\eqref{eq:goal}, so if $s \ge |a|^{d/(d-1)}$, the inequality holds with 
$\varkappa = \varkappa_\delta''' = C_\delta$.

Thus, assume that
\begin{equation*}
s <|a|^{d/(d-1)}
.
\end{equation*}
Now we evoke Lemma~\ref{lem:ineq3} with $c_0 = a/2$ and $c_1 = \delta$.    
Taking the supremum and infimum of~$|f_u|$ on the right-hand side 
of~\eqref{eq:lemA} over the larger set $\Omega \times [-\delta\leq f \le \delta]$, we 
ensure that these extreme values are independent of~$a$ and the inequality still holds, i.~e.\ 
we have
\begin{equation*}
\int_{[0 < f < \delta]} f \, dx
\ge A_\delta a - B_\delta s
\ge \left (A_\delta - B_\delta |a|^{1/(d-1)} \right) |a|
\ge \frac{A_\delta}{2} |a|
\end{equation*}
given that $|a| < - a_\delta := \min( (A_\delta/(2B_\delta))^{d - 1}, 
\delta)$.  Then the first term on the left-hand side of~\eqref{eq:goal} admits 
the estimate
\begin{align*}
\int_{[u > \sigma]} u (f - a)^2 \, dx
& \ge
\inf_\Omega m_\delta
\int_{[m_\delta < u < m]} (f - a)^2 \, dx
\\
& \ge
\frac{\inf_\Omega m_\delta}{|\Omega|}
\left(
\int_{[0 < f < \delta]} f \, dx
\right)^2
\\
& \ge
\frac{A_\delta^2\inf_\Omega m_\delta}{4|\Omega|}
a^2
=: \varkappa_\delta'''' a^2
.
\end{align*}
To complete the proof, it suffices to take $\varkappa_\delta = 
\min(
\varkappa_\delta'
,
\varkappa_\delta''
,
\varkappa_\delta'''
,
\varkappa_\delta''''
)$.
\end{proof}

\begin{lemma}
\label{lem:ineq5}
Let $a \ge 0$ and $c > 0$, and suppose that $m_c$ exists; then for any $u \in 
U$ we have
\begin{equation}
\label{eq:ineq5}
\int_{[u > m]} u(f - a)^2\,dx \ge
\left( 
\frac{
\inf_{u > m} (uf)_u
}{
\sup_{m_{c} \le u \le m} |f_u|
}
\right)^2
c^2
\big|[f > c]\big|^2
.
\end{equation}
\end{lemma}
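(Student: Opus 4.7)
The plan is to estimate $u(f-a)^2 \ge uf^2$ on $[u > m]$ (which works because $a \ge 0$ and $f < 0$ there), pass from $\int uf^2$ to $(\int u|f|)^2$ by Cauchy-Schwarz, and then apply the Mean Value Theorem together with the probability constraint on $u$ and $m$ to bring in the set $[f > c]$.

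First I would note that on $[u > m]$ the monotonicity $f_u < 0$ and $f(x, m) = 0$ force $f < 0$, so for any $a \ge 0$,
\[
(f - a)^2 = f^2 - 2af + a^2 \ge f^2.
\]
Cauchy-Schwarz in $L^2(u\,dx)$, combined with $\int_{[u > m]} u\,dx \le 1$, then gives
\[
\int_{[u > m]} u(f - a)^2\,dx \ge \int_{[u > m]} uf^2\,dx \ge \left(\int_{[u > m]} u|f|\,dx\right)^2.
\]

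Next I would apply the Mean Value Theorem to $\xi \mapsto \xi f(x, \xi)$ between $\xi = m(x)$ and $\xi = u(x)$: since $m(x)f(x, m(x)) = 0$, this yields $u(x)f(x, u(x)) = (\xi f)_\xi(x, \eta)(u - m)$ for some $\eta \in (m, u)$. By Remark~\ref{rem:ufu}, $(uf)_u$ is uniformly negative on $\{u > m\}$, so with $\kappa := |\inf_{u > m}(uf)_u| > 0$ we have $u|f| \ge \kappa(u - m)$. Using that both $u$ and $m$ are probability densities,
\[
\int_{[u > m]}(u - m)\,dx = \int_{[u \le m]}(m - u)\,dx \ge \int_{[f > c]}(m - u)\,dx,
\]
since $[f > c] \subset [u < m_c] \subset [u < m]$. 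On $[f > c]$ one has $u \le m_c$ and $m - u \ge m - m_c \ge c/\sup_{m_c \le \xi \le m}|f_u|$ by a second application of the Mean Value Theorem, this time to $f(x, \cdot)$ using $f(x, m) = 0$ and $f(x, m_c) = c$. Chaining all the estimates produces \eqref{eq:ineq5}.

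The main delicacy is the sign bookkeeping: confirming that $(uf)_u < 0$ on $\eta \in (m, u)$ so that $uf$ is genuinely negative there, and interpreting $\inf_{u > m}(uf)_u$ from Remark~\ref{rem:ufu} as a finite negative constant whose absolute value serves as a uniform lower bound on $|(uf)_u|$ throughout $\{u > m\}$. Once the signs are pinned down, the remaining steps are a routine chain of elementary inequalities.
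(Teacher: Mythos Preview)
Your proof is correct and follows essentially the same route as the paper: both use the pointwise bound $u|f| \le u|f-a|$ on $[u>m]$, Cauchy--Schwarz against the probability measure $u\,dx$, the Mean Value Theorem for $\xi\mapsto \xi f(x,\xi)$ to get $u|f|\ge |\inf_{u>m}(uf)_u|\,(u-m)$, the balance identity $\int_{[u>m]}(u-m)=\int_{[u<m]}(m-u)$, and finally the estimate $m-m_c\ge c/\sup_{m_c\le\xi\le m}|f_u|$ on $[f>c]=[u<m_c]$. The only cosmetic difference is ordering: you square first and then chain the lower bounds, whereas the paper compares the two sides of the balance identity and squares at the end.
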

\begin{proof}
Let us again estimate both sides of \eqref{eq:tmp4}.

On one hand, we have
\begin{align*}
\int_{[u < m]} (m - u) \, dx
& \ge \int_{[u < m_c]} (m - u) \, dx
\\
& \ge \inf_\Omega (m - m_c) \big|[u < m_c]\big|
\\
& \ge
\frac{c\big|[u < m_c]\big|}{\sup\limits_{m_c(x) \le \xi \le m(x)} | f_u(x, \xi) 
|}
,
\end{align*}
where we take advantage of~\eqref{eq:mc1}.

Before estimating the right-hand side of~\eqref{eq:tmp4}, observe that if $\xi 
> m$, we can use the Mean Value Theorem and get
\begin{equation*}
\xi |f(x, \xi)|
= |\xi f(x, \xi) - m(x) f(x, m(x))|
\ge\left| \inf_{u > m} (uf)_u \right|(\xi - m(x))
,
\end{equation*}
where the modulus of the infimum is uniformly positive by Remark~\ref{rem:ufu}.  Now, setting 
$\xi = u(x)$, we have
\begin{align*}
\int_{[u > m]} (u - m) \, dx
& \le
\left| \left( \inf_{u > m} (uf)_u \right) \right|^{-1}
\int_{[u > m]} u|f| \, dx
\\
& \le
\left| \left( \inf_{u > m} (uf)_u \right) \right|^{-1}
\int_{[u > m]} u|f - a| \, dx
\\
& =
\left| \left( \inf_{u > m} (uf)_u \right) \right|^{-1}
\int_{\Omega} u|f - a| 1_{[u > m]}(x) \, dx
\\
& \le
\left| \left( \inf_{u > m} (uf)_u \right) \right|^{-1}
\left(\int_{\Omega} u(f - a)^21_{[u > m]}(x) \, dx\right)^{\frac 12}
\\
& =
\left| \left( \inf_{u > m} (uf)_u \right) \right|^{-1}
\left(\int_{[u > m]} u(f - a)^2 \, dx\right)^{\frac 12}
,
\end{align*} since $u$ is a probability measure.
Comparing this with the above estimate of the left-hand side 
of~\eqref{eq:tmp4}, we recover~\eqref{eq:ineq5}.
\end{proof}

Now we prove Lemma~\ref{lem:ineq-main-lemma} for small positive~$a$.

\begin{lemma}
\label{lem:ineq61}
Suppose that $\delta > 0$ is such that $m_{\delta/2}$ is defined; then there 
exists $\varkappa_\delta > 0$ such that inequality \eqref{eq:goal} holds with 
$\varkappa = \varkappa_\delta$ and any positive $\sigma \le \inf_\Omega 
m_{\delta/2}$ for all $u \in U$ and $a \in (0, \delta)$.
\end{lemma}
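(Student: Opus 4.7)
The plan is a short two-case analysis based on the size of the super-level set $[f > a/2]$, using Lemma~\ref{lem:ineq5} (which plays the role that Lemma~\ref{lem:ineq3} played for negative $a$) together with a direct pointwise lower bound on $(f-a)^2$. Notably, unlike in the proof of Lemma~\ref{lem:ineq4}, the gradient term is not needed here: Lemma~\ref{lem:ineq5} already packages the probability-measure constraint $\int u = 1$ into a lower bound on $\int_{[u>m]} u(f-a)^2\,dx$, so no further subdivision of $\Omega$ via Lemma~\ref{lem:ineq1} is required.

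First I would fix $u \in U$, $a \in (0,\delta)$, and $\sigma \in (0,\inf_\Omega m_{\delta/2}]$. Since $0 < a/2 \le \delta/2$, Remark~\ref{rem:ex-mc} guarantees $m_{a/2}$ exists, and the monotonicity assumption~\eqref{eq:eep-f} gives $m_{a/2} \ge m_{\delta/2}$ pointwise. Applying Lemma~\ref{lem:ineq5} with $c = a/2$ yields
\begin{equation*}
\int_{[u > m]} u(f - a)^2 \, dx \ge K_\delta \, \frac{a^2}{4} \, \bigl|[f > a/2]\bigr|^2,
\end{equation*}
where
\begin{equation*}
K_\delta := \left(\frac{\inf_{u > m} (uf)_u}{\sup_{m_{\delta/2}(x) \le \xi \le m(x)}|f_u(x,\xi)|}\right)^{2} > 0
\end{equation*}
is independent of $a$ and $u$: the numerator is uniformly negative by Remark~\ref{rem:ufu}, and enlarging the set $\{m_{a/2} \le \xi \le m\}$ to $\{m_{\delta/2} \le \xi \le m\}$ can only increase the denominator. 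Observe also that $m \ge \inf_\Omega m_{\delta/2} \ge \sigma$, so $[u > m] \subseteq [u \ge \sigma]$.

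In Case~1, when $|[f > a/2]| \ge |\Omega|/2$, the bound above immediately gives
\begin{equation*}
\int_{[u \ge \sigma]} u(f-a)^2 \, dx \ge \frac{K_\delta \, |\Omega|^2}{16}\, a^2.
\end{equation*}
In Case~2, when $|[f \le a/2]| \ge |\Omega|/2$, on that set one has $u \ge m_{a/2} \ge \inf_\Omega m_{\delta/2} \ge \sigma$ and $(f - a)^2 \ge a^2/4$, so
\begin{equation*}
\int_{[u \ge \sigma]} u(f-a)^2 \, dx \ge \frac{a^2}{4} \int_{[f \le a/2]} u \, dx \ge \frac{a^2\,|\Omega|\,\inf_\Omega m_{\delta/2}}{8}.
\end{equation*}
Taking $\varkappa_\delta$ to be the minimum of the two constants concludes the argument.

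The only delicate point is uniformity as $a \to 0^+$: since $m_{a/2} \to m$, the natural constant from Lemma~\ref{lem:ineq5} involves a supremum over a shrinking set, which could in principle degenerate. Replacing that supremum by the larger, $a$-independent supremum over $\{m_{\delta/2} \le \xi \le m\}$ preserves the inequality while yielding a $\varkappa_\delta$ that depends only on $\delta$ (and on $U$ through $m$ and $f$).
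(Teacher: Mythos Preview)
Your proof is correct and follows essentially the same route as the paper's: the same dichotomy on $|[f>a/2]|$, the same appeal to Lemma~\ref{lem:ineq5} with $c=a/2$ in one case and the same pointwise bound $(f-a)^2\ge a^2/4$ on $[f\le a/2]$ in the other, with the same replacement of $\sup_{m_{a/2}\le\xi\le m}|f_u|$ by the $a$-independent $\sup_{m_{\delta/2}\le\xi\le m}|f_u|$. One small remark: your closing paragraph suggests the supremum over the shrinking set $\{m_{a/2}\le\xi\le m\}$ ``could in principle degenerate'' as $a\to 0^+$, but in fact it tends to the finite positive value $\sup_x|f_u(x,m(x))|$; the real issue is simply that it depends on $a$, and your fix (enlarging the set) handles that correctly.
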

\begin{proof}
Fix $\sigma \le \inf_\Omega m_{\delta/2}$, $u \in U$, and $a \in (0, \delta)$.  
Observe that in~$\Omega$,
\begin{equation*}
f < \frac\delta2 \Leftrightarrow u > m_{\delta/2} \Rightarrow u > \sigma
.
\end{equation*}

By Remark~\ref{rem:ex-mc}, $m_{a/2}$ is defined.  Consider the partition
\begin{equation*}
\Omega = \left[ f > \frac a2 \right] \cup \left[ f \le \frac a2 \right]
.
\end{equation*}
Obviously, at least one of the sets on the right-hand side has volume $\ge 
|\Omega|/2$.

Suppose that
\begin{equation*}
\left| \left[ f > \frac a2 \right] \right| \ge \frac{|\Omega|}{2}
.
\end{equation*}
Taking into account inequality~\eqref{eq:ineq5} for $c = a/2$ and observing 
that
\begin{equation*}
\sup_{m_{a/2} \le u \le m} |f_u|
\ge
\sup_{m_{\delta/2} \le u \le m} |f_u|
\end{equation*}
with the right-hand side independent of~$a$, we obtain
\begin{equation*}
\int_{[u > \sigma]} u(f - a)^2 \, dx
\ge
\int_{[u > m]} u(f - a)^2 \, dx
\ge
\varkappa_\delta' a^2
\end{equation*}
with some constant~$\varkappa_\delta'$ independent of~$a$ and~$u$.

If, on the other hand, we have
\begin{equation*}
\left| \left[ f \le \frac a2 \right] \right| \ge \frac{|\Omega|}{2}
,
\end{equation*}
then
\begin{align*}
\int_{[u > \sigma]} u (f - a)^2 \, dx
& \ge
\int_{[f \le a/2]} u (f - a)^2 \, dx
\\
& \ge
\left(\frac 14 \left| \left[ f \le \frac a2 \right] \right|\inf_\Omega m_{a/2} 
\right)
a^2
\\
& \ge
\left(\frac 18 |\Omega| \inf_\Omega m_{\delta/2} \right)
a^2
=:
\varkappa_\delta''
a^2
\end{align*}
with $\varkappa_\delta''$ independent of~$u$ and~$a$.

To complete the proof, it suffices to take $\varkappa_\delta = 
\min(\varkappa_\delta', \varkappa_\delta'')$.
\end{proof}

\begin{lemma}
\label{lem:ineq6}
Suppose that $\delta > 0$ is such that $m_{\delta}$ is defined; then there 
exists $\varkappa_\delta > 0$ such that inequality \eqref{eq:goal} holds with 
$\varkappa = \varkappa_\delta$ and any positive $\sigma \le \inf_\Omega 
m_\delta$ for all $u \in U$ and $a < -2 \delta$.
\end{lemma}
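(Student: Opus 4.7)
I would mirror the structure of Lemma~\ref{lem:ineq61}, with the roles of positive and negative $a$ interchanged. Since $a<-2\delta$, the decisive bounds are $(f-a)^2\ge a^2$ on $[f\ge 0]=[u\le m]$, and $(f-a)^2\ge a^2/4$ on $[u>m]\cap\{|f|\le|a|/2\text{ or }|f|\ge 3|a|/2\}$. The plan is to pick $\sigma\le\min(\inf_\Omega m_\delta,1/(4|\Omega|))$ so that $\int_{[u\ge\sigma]}u\,dx\ge 3/4$, and then run a case analysis on how the $u$-mass is distributed.

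First, if $\int_{[\sigma\le u\le m]}u\,dx\ge 1/4$, then the estimate $(f-a)^2\ge a^2$ on this set immediately gives $\int u(f-a)^2\,dx\ge a^2/4$. Otherwise $\int_{[u>m]}u\,dx\ge 1/2$; I then split according to the $u$-mass of the ``resonant'' set $B=[u>m]\cap\{|a|/2<|f|<3|a|/2\}$, where $(f-a)^2$ may be small. If $\int_B u\,dx<1/4$, the complement carries at least $1/4$ of the $u$-mass with $(f-a)^2\ge a^2/4$, and the estimate is complete. If $\int_B u\,dx\ge 1/4$, uniform integrability forces $|B|\ge\tau_0(U)>0$, and since $B\subset[f\le-|a|/2]$, I apply Lemma~\ref{lem:ineq1} with $c_0=-|a|/2$ and $c_1=\delta$; using $(\delta+|a|/2)^2\ge a^2/4$ this yields
\[
\int u|\nabla f|^2\,dx\ge C\,a^2\inf_\Omega m_\delta\min\bigl(\tau_0,\,|[u\le m_\delta]|\bigr)^{2(d-1)/d},
\]
which closes the argument whenever $|[u\le m_\delta]|$ is bounded below.

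The main obstacle is the residual subcase in which $|[u\le m_\delta]|$ is small so that this last estimate degenerates. I would subdivide once more: if $\int_{[m_\delta<u\le m]}u\,dx$ is bounded below, use $f\in[0,\delta]$ and $u\ge\inf_\Omega m_\delta$ together with $(f-a)^2\ge a^2$ on that set. If both are small then $|[u\le m]|$ is small, so mass balance $\int_{[u>m]}(u-m)\,dx=\int_{[u<m]}(m-u)\,dx\le\sup_\Omega m\cdot|[u<m]|$ forces $\int_{[u>m]}(u-m)\,dx$ to be small; combined with the pointwise estimate $u|f|\ge c_1(u-m)$ on $[u>m]$ coming from Remark~\ref{rem:ufu} (with $c_1=|\inf_{u>m}(uf)_u|>0$) and a truncation at $u\le K$ (valid by uniform integrability, on the range where $|f_u|$ is bounded), the $u$-mass of $B$, on which $|f|\ge|a|/2$, cannot remain as large as $1/4$ once $|a|$ exceeds a threshold depending only on $U$ and $\delta$. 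For $|a|$ in the remaining bounded range, $a^2$ itself is bounded, so the uniform positive lower bound on the left-hand side provided by the earlier subcases yields the constant $\varkappa_\delta$ after dividing by $a^2$. Taking the minimum of the various constants produces the final $\varkappa_\delta$.
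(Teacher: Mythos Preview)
Your approach is genuinely different from the paper's and has good ideas, but there are two real gaps.

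\medskip
\textbf{The $\sigma$ restriction.} You write ``pick $\sigma\le\min(\inf_\Omega m_\delta,1/(4|\Omega|))$'', but $\sigma$ is not yours to pick: the lemma requires the estimate for \emph{every} $\sigma\le\inf_\Omega m_\delta$, and since the left-hand side of~\eqref{eq:goal} is decreasing in $\sigma$, the hardest case is $\sigma=\inf_\Omega m_\delta$. Nothing guarantees $\inf_\Omega m_\delta\le 1/(4|\Omega|)$; for small $\delta$, $m_\delta$ is close to $m$ and $\inf_\Omega m_\delta$ can be close to $\inf_\Omega m$. Without the extra bound you cannot conclude $\int_{[u>m]}u\,dx\ge 1/2$ from the failure of your first case. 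This is repairable by replacing $1/4$ and $1/2$ with thresholds depending on $\mu_\delta:=(\inf_\Omega m_\delta)|\Omega|<1$, but as written it is incorrect.

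\medskip
\textbf{The final subcase.} Your last branch---$\int_B u\,dx\ge 1/4$ while $|[u\le m_\delta]|$ and $\int_{[m_\delta<u\le m]}u\,dx$ are both ``small''---is not closed. The inconsistency you derive is correct in spirit: combining mass balance with $|f|\le M_K(u-m)$ on $[m<u\le K]$ and uniform integrability shows that in this subcase $|a|\le C\epsilon$, where $\epsilon$ is the smallness threshold you imposed. The right move is therefore to fix $\epsilon$ in advance with $C\epsilon<2\delta$, so that the subcase is \emph{vacuous} for all $a<-2\delta$. Instead you leave $\epsilon$ unquantified, obtain a ``remaining bounded range'' $2\delta<|a|\le A$, and then appeal to ``the uniform positive lower bound on the left-hand side provided by the earlier subcases''. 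That appeal is circular: the earlier subcases are disjoint alternatives in your case analysis and say nothing about the branch you are in. On the resonant set $B$ you have no lower bound on $(f-a)^2$, so there is no direct positive lower bound on the left-hand side either.

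\medskip
\textbf{Comparison with the paper.} The paper avoids both difficulties by partitioning $\Omega$ according to the \emph{values of $f$} (four pieces $s_1,\dots,s_4$) rather than the $u$-mass, and then using Lemmas~\ref{lem:ineq2} and~\ref{lem:ineq2.5} together to force a clean dichotomy: either $\min(s_1,s_4)\ge c_\delta$, in which case Lemma~\ref{lem:ineq1} gives the gradient term, or $s_2+s_3\ge c_\delta$, in which case $(f-a)^2\ge a^2/4$ on a set of definite measure with $u\ge\inf_\Omega m_\delta$. No residual subcase arises, and $\sigma$ enters only through the inclusion $[f<\delta]\subset[u>\sigma]$, which needs only $\sigma\le\inf_\Omega m_\delta$.
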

\begin{proof}
Given $a < -2 \delta$ and $u \in U$, write
\begin{equation*}
|\Omega|
= \left|\left [ f \le \frac a2 \right ]\right|
+ \left| \left [\frac a2 < f \le 0 \right]\right|
+ \big|[0 < f < \delta ] \big|
+ \big| [ f \ge \delta] \big|
=: s_1 + s_2 + s_3 + s_4
.
\end{equation*}

Clearly,
\begin{equation}
\label{eq:tmp5}
\max s_i \ge \frac{|\Omega|}{4}
.
\end{equation}

It follows from Lemmas~\ref{lem:ineq2} and~\ref{lem:ineq2.5} that a lower 
bound on $\big|[f \ge \delta]\big| = s_4$ yields a lower bound on $\big|[f < 
0]\big| \le s_1 + s_2$ and a lower bound on $ s_1 = \big| [ f \le a/2 ]\big| 
\le \big| [ f \le - \delta ]\big|$ yields a lower bound on $ \big| [ f > 0 ] 
\big| = s_3 + s_4$.  Together with~\eqref{eq:tmp5} this implies that at least 
one of the following inequalities hold:
\begin{gather*}
s_2 \ge \frac{|\Omega|}{4}, \quad
s_3 \ge \frac{|\Omega|}{4},
\\
\min(s_1 + s_2, s_4) \ge 2c_\delta, \quad
\min(s_3 + s_4, s_1) \ge 2c_\delta
,
\end{gather*}
where 
$c_\delta > 0$ is independent of~$u$ and~$a$.  Assuming for definiteness that 
$c_\delta < |\Omega|/4$, we easily check that either
\begin{equation}
\label{eq:tmp6}
\min
\left(
\left |\left[ f \le \frac a2 \right] \right|,
\big |[ f \ge \delta ] \big|
\right)
=
\min(s_1, s_4) \ge c_\delta
\end{equation}
or
\begin{equation}
\label{eq:tmp7}
\left| \left [\frac a2 < f < \delta \right]\right|
=
s_2 + s_3 \ge c_\delta
.
\end{equation}

On the set $[a/2 < f < \delta]$ we clearly have $u > \sigma$. Thus, 
if~\eqref{eq:tmp6} is true, using Lemma~\ref{lem:ineq1} we obtain
\begin{equation*}
\int_{[u >\sigma]} u |\nabla f|^2 \, dx
\ge
\int_{[a/2 < f < \delta]} u |\nabla f|^2 \, dx
\ge
4
\varkappa_\delta'
\left(\delta - \frac a2 \right)^2
\ge
\varkappa_\delta' a^2
.
\end{equation*}
If, on the other hand, \eqref{eq:tmp7} holds, note that $a/2 < f < \delta$ 
implies $f - a > -a/2 > 0$, and estimate
\begin{equation*}
\int_{[u > \sigma]} u(f - a)^2 \, dx
\ge
\int_{[a/2 < f < \delta]} u(f - a)^2 \, dx
\ge
\frac{a^2}{4} \inf_\Omega m_\delta
\left| \left [\frac a2 < f < \delta \right]\right|
\ge \varkappa_\delta'' a^2
.
\end{equation*}
Thus, one can take $\varkappa_\delta = \min(\varkappa_\delta', 
\varkappa_\delta'')$.
\end{proof}

\begin{lemma}
\label{lem:ineq7}
Suppose that $\delta > 0$ is such that $m_{\delta}$ is defined; then there 
exists $\varkappa_\delta > 0$ such that inequality \eqref{eq:goal} holds with 
$\varkappa = \varkappa_\delta$ and any $\sigma \le \inf m_\delta$ for all $u 
\in U$ and $a \ge 2 \delta$.
\end{lemma}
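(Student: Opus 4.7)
The plan is to exploit the fact that when $a$ is large positive, the region $[f \le \delta]$ automatically forces $(f-a)^2$ to be of order $a^2$, while on this region $u$ is pointwise bounded below by $m_\delta$ and hence by $\sigma$, so the whole task reduces to obtaining a uniform lower bound on $\int_{[f \le \delta]} u \, dx$.

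First, using \eqref{eq:eep-f} together with $f(x, m(x)) = 0$, strict monotonicity of $f$ in $u$ yields $m_\delta(x) < m(x)$ for every $x$, since $f(x, m_\delta(x)) = \delta > 0 = f(x, m(x))$. Integrating and recalling that $m$ is a probability density, I set
$$\eta_\delta := 1 - \int_\Omega m_\delta \, dx > 0,$$
a constant independent of $u$ and $a$. Because $[f \le \delta] = [u \ge m_\delta]$ and $\sigma \le \inf_\Omega m_\delta$, this set sits inside $[u \ge \sigma]$; on its complement $[f > \delta] = [u < m_\delta]$ one has $\int_{[f > \delta]} u \, dx \le \int_\Omega m_\delta \, dx = 1 - \eta_\delta$, which yields $\int_{[f \le \delta]} u \, dx \ge \eta_\delta$ for every $u \in U$.

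Second, for $a \ge 2\delta$ and any $x$ with $f(x, u(x)) \le \delta$, the gap satisfies $f - a \le \delta - a \le -a/2$, hence $(f-a)^2 \ge a^2/4$. Combining these two observations,
$$\int_{[u \ge \sigma]} u(f-a)^2 \, dx \ge \int_{[f \le \delta]} u(f-a)^2 \, dx \ge \frac{a^2}{4}\int_{[f \le \delta]} u \, dx \ge \frac{\eta_\delta}{4} a^2,$$
and the conclusion of the lemma follows with $\varkappa_\delta := \eta_\delta / 4$ (the $|\nabla f|^2$ term is simply discarded).

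Unlike the small-$|a|$ regime treated in Lemma \ref{lem:ineq4}, where several sub-cases had to be combined with the relative isoperimetric inequality, no real obstacle arises here: once $a$ is comparable to or larger than $\delta$, the gap $|f - a|$ is already coercive on a set of uniformly positive $u$-mass, and the only soft ingredient needed is the strict inequality $\int_\Omega m_\delta < \int_\Omega m = 1$, which is immediate from the strict monotonicity of $f$ in its second argument.
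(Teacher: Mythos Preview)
Your argument is correct, and it is genuinely simpler than the paper's. The paper first secures a uniform lower bound on the \emph{Lebesgue} measure $|[f<\delta]|$ by a partition argument together with Lemma~\ref{lem:ineq2} (which in turn relies on the uniform integrability of $U$ through $\omega_U^-$), and then converts this into a lower bound on $\int_{[f<\delta]} u|f-a|$ via a Cauchy--Schwarz step and the pointwise bound $u\ge\inf_\Omega m_\delta$. You bypass both of these ingredients by directly bounding the $u$-\emph{mass} $\int_{[f\le\delta]} u$: since $u<m_\delta$ on the complement and $\int_\Omega m_\delta<\int_\Omega m=1$, the mass balance forces $\int_{[f\le\delta]} u\ge\eta_\delta:=1-\int_\Omega m_\delta>0$. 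This yields the claim immediately and, unlike the paper's route, uses nothing about $U$ beyond the fact that each $u$ is a probability density. The paper's approach does have the minor advantage of being parallel to the treatment of the other regimes (Lemmas~\ref{lem:ineq4}, \ref{lem:ineq61}, \ref{lem:ineq6}), where Lemma~\ref{lem:ineq2} is genuinely needed, but for this particular lemma your mass-balance shortcut is cleaner.
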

\begin{proof}
The partition
\begin{equation*}
\Omega = [ f < \delta ] \cup [f \ge \delta]
\end{equation*}
ensures that either $\big|[ f < \delta ]\big| \ge |\Omega|/2$ or $\big|[ f \ge 
\delta ]\big| \ge |\Omega|/2$.  In the latter case Lemma~\ref{lem:ineq2} 
guarantees a lower bound on $\big| [f \le 0] \big|$ and hence on $\big| [f < 
\delta] \big|$.  Either way, we can write
\begin{equation*}
\big| [f < \delta] \big| \ge s_\delta
,
\end{equation*}
where $s_\delta$ is independent of $a$ and $u$.

As $f < \delta$ implies $u > \sigma$ and $f - a < \delta - a \le - a/2$, we 
have the estimate
\begin{align*}
\int_{[u > \sigma]} u (f - a)^2 \, dx
& \ge
\int_{\Omega} u (f - a)^2 1_{[f < \delta]}(x) \, dx
\\
& \ge
\left(
\int_\Omega u |f - a|1_{[f < \delta]}(x) \, dx
\right)^2
\\
& =
\left(
\int_{[u > m_\delta]} u |f - a| \, dx
\right)^2
\\
& \ge
\left(
\frac 14
s_\delta
\inf_\Omega m_\delta
\right)^2
a^2
\end{align*}
and \eqref{eq:goal} follows.
\end{proof}
Now we can assemble the proof of Lemma~\ref{lem:ineq-main-lemma} from 
established particular cases.
\begin{proof}[Proof of Lemma~\ref{lem:ineq-main-lemma}]
Take $\delta_1 > 0$ such that $m_c$ exists whenever $|c| \le \delta_1$.  By 
Lemma~\ref{lem:ineq4}, there exist $\varkappa_1 > 0$, $\sigma_1 > 0$, and $a_1 
\in (-\delta_1 ,0)$ such that \eqref{eq:goal} holds with $\varkappa = 
\varkappa_1$ and $\sigma = \sigma_1$ for all $u \in U$ and $a \in (a_1, 
0)$.  Set $\delta_2 = - a_1$.  This is a suitable value of~$\delta$ for 
Lemma~\ref{lem:ineq61}, so we conclude that~\eqref{eq:goal} holds with 
$\varkappa = \varkappa_2$ and $\sigma = \sigma_2$ for $u \in U$ and $a \in (- 
\delta_2, \delta_2)$ and, moreover, $m_c$ is defined whenever $|c| \le 
\delta_2$.  Now in order to find~$\varkappa$ and~$\sigma$ such 
that~\eqref{eq:goal} holds for all $u \in U$ and all real~$a$, it suffices to 
evoke Lemmas~\ref{lem:ineq6} and~\ref{lem:ineq7} with $\delta = \delta_2/3$.
\end{proof}

\section{Applications}

\subsection{Spherical gradient flows} \label{s:gf}

Let~$\Omega$ be an open connected bounded domain 
in~$\mathbb R^d$ with sufficiently smooth boundary and let~$\nu$ be the outward 
unit normal along~$\partial \Omega$.  We are interested in nonnegative 
solutions of
\begin{align}
\partial_t u & = -\Div (u \nabla f) + u\left( f - \int_\Omega uf\, dx \right), 
& (x, t) & \in \Omega \times (0, \infty),
\label{eq:p1}
\\
u \frac{\partial f}{\partial \nu} & = 0, & (x, t) & \in \partial \Omega \times 
(0, \infty),
\label{eq:p2}
\\
u & = u^0, & (x, t) & \in \Omega \times {0},
\label{eq:p3}
\\
u & \ge 0, \ \int_\Omega u \,dx = 1,
& (x, t) & \in \Omega \times (0, \infty)
.
\label{eq:p4}
\end{align}
Here $u$ is the unknown function and $f = f(x, u(x,t))$ is a known nonlinear scalar
function of $x$ and $u$.  The initial data~$u^0$ is a probability density.

For the sake of brevity we will denote
\begin{equation*}
\bar f = \int_\Omega u f \, dx
.
\end{equation*}

\begin{remark} \label{period}
The Neumann boundary condition \eqref{eq:p2} can be substituted with the space-periodic one without affecting the validity of the results of this section. 
\end{remark}

Throughout Section \ref{s:gf}, we make the following assumptions about the nonlinearity~$f$. Some of the results do not require all of these assumptions: it will be explicitly indicated where relevant.
\begin{gather}
f \in
C^2( \overline\Omega \times (0, \infty))
\cap L^1_\text{loc}(\overline \Omega \times [0, \infty))
,
\label{eq:f1a}
\\
uf, uf_x
\in C(\overline \Omega \times [0, +\infty))
,
\label{eq:f1b}
\\
f_u < 0
,
\label{eq:f2}
\\
|f(x, u)|  \le g_1(u)
\quad
\text{a.~a. } u > 0; \ g_1 \in L^1_\text{loc}[0, \infty)
,
\label{eq:f5}
\\
u |f_u(x, u)| + u |f_{xu}(x, u)| \le g_2(u)
\quad
\text{a.~a. } u > 0; \ g_2 \in L^1_\text{loc}[0, \infty)
,
\label{eq:f55}
\\
(uf_x)\big|_{u = 0} = 0
,
\label{eq:f1d}
\\
\text{either $f_x = 0$ for large $u$} \text{\quad or\quad} \lim_{u \to \infty} 
f(x, u) = - \infty \ \forall x \in \overline \Omega
,
\label{eq:large-u}
\\
\text{either $f_x = 0$ for small $u$} \text{\quad or\quad} \lim_{u \to +0} 
f(x, u) = \infty \ \forall x \in \overline \Omega
,
\label{eq:small-u}
\\
u\left[f_x^2+ (uf_{xu})^2+(uf_u)^2\right]= O(1) \quad \text{as $u \to 0$ uniformly in $x \in \Omega$}
,
\label{eq:O1}
\\
u f_{uu} = O(f_u) \quad \text{as $u \to 0$ uniformly in $x \in 
\Omega$}.
\label{eq:O2}
\end{gather}

Assumption  \eqref{eq:f2} ensures non-strict parabolicity of the problem.  
The remaining assumptions are technical.  It is easy to check (see 
\cite[Remark 3.4]{KV17}) that~\eqref{eq:large-u} and~\eqref{eq:small-u} ensure that given 
$v \in L^\infty_+(\Omega)$ bounded away from~$0$, there exist $m_{c_1}$ and 
$m_{c_2}$ (this notation was introduced in the beginning of Section \ref{s:ineq}) such that $m_{c_1} \le v \le m_{c_2}$ a.~e.\ in~$\Omega$. In particular, taking $v\equiv\frac 2 {|\Omega|}$ and $v\equiv\frac 1 {2|\Omega|}$ in this observation, we infer existence of $m_{c_1}$, $m_{c_2}$  such that $$\int_{\Omega} m_{c_1}\,dx\leq \frac 1 2,\ \int_{\Omega} m_{c_2}\,dx\geq 2.$$ This implies (cf. Remark \ref{rem:ex-mc}) existence and uniqueness of a $C^2$-smooth probability density $m \colon 
\overline \Omega \to (0, \infty)$ such that $f(x, m(x))$ is constant on~$\overline\Omega$. Since problem \problem{} does not change after adding constants to $f$, without loss of generality we will assume that \begin{equation} f(x, m(x))=0.\label{impm}\end{equation}

Let us introduce the energy and entropy functionals for equation~\eqref{eq:p1} 
as well as the notion of weak solution.

Bound~\eqref{eq:f55} ensures that
\begin{equation*}
\Phi(x, u) = - \int_0^u \xi f_u(x, \xi) \,d \xi,
\quad
\Psi(x, u) = \int_0^u \Phi(x, \xi) \,d \xi
\end{equation*}
are well defined and belong to
$C^1(\overline \Omega \times [0, \infty))$, whereas
\begin{align*}
\Phi(x, 0) & = \Psi(x, 0) = 0,
& \Phi_u & = - u f_u,
\\
\Phi_x & = - \int_0^u \xi f_{xu}(x, \xi) \,d \xi,
& \Psi_u & = \Phi,
\\
\Phi_{uu} & = - (uf_u)_u,
& \Phi_{xu} & = - uf_{xu}
.
\end{align*}
Note that both~$\Phi$ and~$\Psi$ are nonnegative and strictly increase with 
respect to~$u$.

By~\eqref{eq:f55}, the superposition operator $L^\infty_+ \to L^\infty$ 
associated with~$\Phi$ is bounded, i.~e. if $u$ is a nonnegative function 
of~$x$ and, possibly, $t$, then an $L^\infty$-bound on~$u$ is translated into 
an $L^\infty$-bound on $\Phi(\cdot, u(\cdot))$.  The same is true of $\Phi_x$ 
and $\Psi$.

In accordance with~\cite{KV17}, we call the functional
\begin{equation*}
\energy(u) = \int_\Omega \Psi(x, u(x)) \, dx
\end{equation*}
the \emph{energy} of problem \problem{}.

Define
\begin{equation}
E(x, u) = - \int_{m(x)}^u f(x, \xi) \,d\xi \label{eq:entr0}
.
\end{equation}
It follows from~\eqref{eq:f5} that~$E$ is well-defined and continuous on 
$\overline \Omega \times [0, \infty)$.  Moreover, $E \ge 0$ and $E(x, u) = 0$ 
if and only if $u = m(x)$, and the superposition operator associated with~$E$ 
is bounded in $L^\infty_+ \to L^\infty_+$.  Thus, for $u \in 
L_+^\infty(\Omega)$ we can define the relative \emph{entropy} of 
equation~\eqref{eq:p1} as follows:
\begin{equation} \label{eq:entr}
\entropy (u) = \int_\Omega E(x, u(x)) \,dx
.
\end{equation}

\begin{lemma}
\label{lem:prop-class}
Let $u$ be a classical solution of~\problem{} on~$[0, T]$. Then $u$ satisfies
\begin{enumerate}
\renewcommand\labelenumi{(\roman{enumi})}
\item
the \emph{energy identity}
\begin{equation}
\label{eq:energy-id}
\partial_t
\energy(u)
= - \int_{\Omega} | \nabla \Phi |^2 \,d x
+ \int_{\Omega} (\Phi_x + uf_x) \cdot \nabla \Phi \,d x
+ \int_{\Omega} u(f - \bar f)\Phi \,d x
\quad t > 0
;
\end{equation}
\item
the \emph{entropy dissipation identity}
\begin{equation}
\label{eq:entropy-diss-class}
\partial_t \entropy(u) = - \int_\Omega u((f - \bar f)^2 + | \nabla f |^2) \cd 
x
\quad t > 0
;
\end{equation}
\item
the bounds
\begin{equation}
\label{eq:linfbound}
\inf_\Omega f(x, u^0(x)) \le f(x, u(x, t)) \le \sup_\Omega f(x, u^0(x))
\quad (x, t) \in \Omega \times (0, \infty)
.
\end{equation}
\end{enumerate}
\end{lemma}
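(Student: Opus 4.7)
The plan is to verify each of the three parts by direct computation from the PDE \eqref{eq:p1}.

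For (i), I differentiate $\energy(u) = \int_\Omega \Psi(x,u)\cd x$ in time, use $\Psi_u = \Phi$, and substitute $\partial_t u$ from \eqref{eq:p1}. An integration by parts in the resulting term $-\int_\Omega \Phi\,\Div(u\nabla f)\cd x$ — whose boundary contribution vanishes thanks to \eqref{eq:p2} — brings $\partial_t\energy$ to the form $\int_\Omega \nabla\Phi\cdot u\nabla f\cd x + \int_\Omega u(f-\bar f)\Phi\cd x$. The remainder is a purely algebraic identity: from $\Phi_u = -uf_u$ one has $\nabla\Phi = \Phi_x - uf_u\nabla u$ and $u\nabla f = uf_x + uf_u\nabla u$, so that $u\nabla f = uf_x + \Phi_x - \nabla\Phi$. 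Pairing with $\nabla\Phi$ yields $(uf_x+\Phi_x)\cdot\nabla\Phi - |\nabla\Phi|^2$, from which \eqref{eq:energy-id} is read off.

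For (ii), the same strategy applies to $\entropy(u) = \int_\Omega E(x,u)\cd x$. Using $E_u = -f$ gives $\partial_t\entropy = -\int_\Omega f\,\partial_t u\cd x$; substituting \eqref{eq:p1} and integrating by parts (the boundary term again vanishing by \eqref{eq:p2}) produces
\[
\partial_t\entropy = -\int_\Omega u|\nabla f|^2\cd x - \int_\Omega uf(f-\bar f)\cd x.
\]
Since $u$ is a probability density, $\int_\Omega u(f-\bar f)\cd x = 0$, hence $\int_\Omega uf(f-\bar f)\cd x = \int_\Omega u(f-\bar f)^2\cd x$, and \eqref{eq:entropy-diss-class} follows.

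For (iii), my approach is a parabolic maximum principle applied to $v(x,t) := f(x,u(x,t))$. Since $f_u<0$, upper bounds on $v$ correspond to lower bounds on $u$ and conversely, so \eqref{eq:linfbound} is equivalent to the claim that $v$ attains its extrema over $\overline\Omega\times[0,T]$ at $t=0$. Writing \eqref{eq:p1} as $\partial_t u = -u\Delta v - \nabla u\cdot\nabla v + u(v-\bar v)$ and multiplying by $f_u$, I obtain the (possibly degenerate) parabolic equation
\[
\partial_t v = -uf_u\,\Delta v - f_u\nabla u\cdot\nabla v + uf_u(v-\bar v),
\]
whose principal coefficient $-uf_u$ is nonnegative. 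At an interior later-time maximum $(x_0,t_0)$ one has $v(x_0,t_0)\ge \bar v(t_0)$, $\nabla v = 0$, and $\Delta v\le 0$, so the right-hand side is $\le 0$, whereas the perturbation $\tilde v = v-\varepsilon t$ would require $\partial_t v\ge\varepsilon>0$ — a contradiction. Boundary maxima are excluded by the standard Neumann parabolic maximum principle using $\partial_\nu v=0$, which follows from \eqref{eq:p2} on the region $\{u>0\}$. Sending $\varepsilon\to0$ yields the upper bound in \eqref{eq:linfbound}, and the lower bound is symmetric.

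The principal obstacle is (iii), where two subtleties arise: the nonlocal coupling through $\bar v(t)$ in the equation for $v$ (harmless at an extremum, because $v-\bar v$ has the correct sign there), and the interpretation of \eqref{eq:p2} at points where $u$ vanishes. The latter is controlled by \eqref{eq:small-u}: since $f\to\infty$ as $u\to 0$, boundedness of $v$ on $\overline\Omega\times[0,T]$ — ensured by the classicality of the solution — forces $\inf u>0$, which legitimizes reading \eqref{eq:p2} as the pointwise Neumann condition $\partial_\nu v=0$ throughout $\partial\Omega\times[0,T]$.
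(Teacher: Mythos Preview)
Your computations for (i) and (ii) are correct and coincide with what the paper calls ``straightforward computation'': differentiate under the integral, substitute the PDE, integrate by parts using the no-flux condition, and rewrite $u\nabla f = uf_x + \Phi_x - \nabla\Phi$ (for (i)) or use $\int_\Omega u(f-\bar f)\,dx = 0$ (for (ii)).

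For (iii) your route differs genuinely from the paper's. You run a pointwise parabolic maximum principle on $v=f(x,u)$, exploiting that at a spatial extremum the nonlocal term $u f_u(v-\bar v)$ has a favourable sign because $v$ dominates (or is dominated by) its $u$-weighted mean. The paper instead uses an $L^1$-contraction argument: setting $c=\inf_\Omega f(x,u^0(x))$ and introducing the comparison profile $m_c$ with $f(\cdot,m_c)\equiv c$, it shows $\partial_t\int_\Omega (u-m_c)_+\,dx\le 0$. The divergence contribution is handled via a Kato-type inequality quoted from \cite[Lemma~3.1]{KV17}, and the reaction term is controlled by observing that the weighted average of $f$ over $[f\le c]$ is at most $\bar f$. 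Your approach is more elementary and self-contained for strictly positive classical solutions and makes the role of the nonlocal term at an extremum transparent; the paper's integral argument is more robust (no need for pointwise second-order information or a Hopf-type boundary lemma, and it passes more readily to weak solutions, which is how the bounds are later used). Your handling of the boundary and of the positivity of $u$ is brief but acceptable here, since the classical solutions actually employed in the paper (Lemma~\ref{lem:ex-smooth}) are uniformly bounded away from zero; strictly speaking, though, your invocation of \eqref{eq:small-u} only covers the second alternative there, whereas positivity really follows already from the fact that $f$ is only assumed $C^1$ on $(0,\infty)$, so a classical solution necessarily stays positive.
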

\begin{proof}
Straightforward computation proves (i) and (ii).

Let us prove the first inequality in~\eqref{eq:linfbound}.  Assume that the 
infimum is finite, because otherwise there is nothing to prove; denote it 
by~$c$.  It follows from~\eqref{eq:large-u} that the function $m_c \colon 
\overline \Omega \to \mathbb R$ satisfying $f(x, m_c(x)) \equiv c$ is defined.  
We have
\begin{equation*}
\partial_t \int_\Omega (u - m_c)_+ \, dx
= \int_\Omega \theta(u - m_c) \partial_t u \, dx
,
\end{equation*}
where
\begin{equation*}
\theta(s) =
\begin{cases}
1 & \text{if } s > 0, \\
0 & \text{if } s \le 0
\end{cases}
\end{equation*}
is the Heaviside step function.  Substituting the right-hand side of the 
equation for $\partial_t u$, we obtain
\begin{align*}
\partial_t \int_\Omega (u - m_c)_+ \, dx
& =
- \int_\Omega \theta(u - m_c) \operatorname{div}(u\nabla f) \, dx
+ \int_\Omega \theta(u - m_c) u(f - \bar f) \, dx
\\
& =: -I_1 + I_2
.
\end{align*}

Writing
\begin{equation*}
I_1 = \int_\Omega \theta(u - m_c) \operatorname{div} (u \nabla f - m_c \nabla 
f(x, m_c(x))) \, dx
,
\end{equation*}
we can use~\cite[Lemma 3.1]{KV17} and conclude that~$I_1 \ge 0$ (though the 
lemma is proved for~$C^\infty$ functions, it holds for $C^2$ functions by 
density).

Now, if
\begin{equation*}
\int_{[u \ge m_c]} u \,dx = 0
,
\end{equation*}
we have $u \le m_c$ a.~e.\ in $\Omega$ and consequently, $I_2 = 0$.  
Otherwise,
\begin{equation*}
I_2
= \int_{[u \ge m_c]} u \, dx
\left(
\frac{
\int_{[u \ge m_c]} uf \, dx
}{
\int_{[u \ge m_c]} u \, dx
}
- \bar f
\right)
\ge 0
,
\end{equation*}
since the average of~$f$ with weight~$u$ over the set $[u \ge m_c] = [f \le 
c]$ is no greater than the weighted average over the whole~$\Omega$.

Thus, we see that
\begin{equation*}
\partial_t \int_\Omega (u - m_c)_+ \, dx \le 0
,
\end{equation*}
and as this integral equals~$0$ at $t = 0$, it equals~$0$ for any~$t$, which 
is equivalent to $u \le m_c$ and to the first inequality 
in~\eqref{eq:linfbound}.

The second inequality in~\eqref{eq:linfbound} is proved in the same way.
\end{proof}

The integral on the right-hand side of~\eqref{eq:entropy-diss-class} is called 
the \emph{entropy production}.  We denote it by~$\entropyproduction(u)$, so 
that~\eqref{eq:entropy-diss-class} can be written as
\begin{equation} \label{eq:eep-df}
\partial_t \entropy(u) = - \entropyproduction(u)
.
\end{equation}

\begin{remark}
We can extend the definition of the entropy production to functions $u \in L_+^\infty(\Omega)$ such that 
$\Phi(\cdot, u(\cdot)) \in H^1(\Omega)$ by the formula
\begin{equation*}
\entropyproduction(u) = \int_\Omega u(f - \bar f)^2 \cd x
+ \int_{[ u > 0]} \frac 1 u | - \nabla \Phi + \Phi_x + uf_x |^2 \cd x
,
\end{equation*}
where the second integral on the right-hand side may be infinite. This is relevant for the weak solutions which will be introduced in Definition \ref{d:ws}.
\end{remark}

Let $Q_T := \Omega \times (0, T)$.

\begin{lemma}
\label{lem:deriv}
If $u$ is a classical solution of~\eqref{eq:p1}--\eqref{eq:p4} on~$[0, T]$ 
satisfying
\begin{equation*}
\| u \|_{L^\infty(Q_T)} \le R
,
\end{equation*}
then
\begin{equation*}
\label{eq:deriv}
\| \partial_t \Phi(u) \|_{[C([0, T]; W^{1, \infty}(\Omega))]^*} \le C(R, T)
\end{equation*}
with $C(R, T) > 0$ independent of $u$. 
\end{lemma}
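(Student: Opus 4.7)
The plan is to test $\partial_t \Phi(u)$ against an arbitrary $\varphi \in C([0,T]; W^{1,\infty}(\Omega))$---smooth by density---and verify that
\begin{equation*}
\left|\int_0^T\!\!\int_\Omega \varphi\,\partial_t\Phi(u)\,dx\,dt\right| \le C(R,T)\,\|\varphi\|_{C([0,T];W^{1,\infty})}.
\end{equation*}
Since $u$ is classical and $\Phi\in C^1(\overline\Omega\times[0,\infty))$, the chain rule $\partial_t\Phi(u)=\Phi_u\,\partial_t u$ holds pointwise. Substituting the PDE~\eqref{eq:p1} and integrating by parts once in space, with the boundary contribution vanishing by the Neumann condition~\eqref{eq:p2}, I obtain
\begin{equation*}
\int_\Omega \varphi\,\partial_t\Phi(u)\,dx = \int_\Omega \Phi_u\nabla\varphi\cdot u\nabla f\,dx + \int_\Omega \varphi\,\nabla\Phi_u\cdot u\nabla f\,dx + \int_\Omega \varphi\,\Phi_u u(f-\bar f)\,dx.
\end{equation*}
The reaction integral is controlled in $L^\infty(Q_T)$ uniformly in $u$: $\Phi_u$ is continuous (hence bounded) on the compact set $\overline\Omega\times[0,R]$, while $uf$ and thus $\bar f$ are bounded via~\eqref{eq:f1b} and $u\le R$. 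Its contribution is therefore at most $C(R,T)\|\varphi\|_{L^\infty(Q_T)}$.

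For the first convective integral, I use the identity $u\nabla f = -\nabla\Phi + \Phi_x + uf_x$, which is a direct consequence of $\Phi_u=-uf_u$ and the chain rule $\nabla\Phi=\Phi_u\nabla u + \Phi_x$. The scalars $\Phi_u$, $\Phi_x$, and $uf_x$ are bounded on $\overline\Omega\times[0,R]$, whereas $\nabla\Phi$ lies in $L^2(Q_T)$ with norm bounded by $C(R,T)$ thanks to the energy identity of Lemma~\ref{lem:prop-class}(i). Combining these bounds yields a majorant of the form $C(R,T)\|\nabla\varphi\|_{L^\infty(Q_T)}$ for this term.

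The main obstacle is the second convective integral $\int\varphi\,\nabla\Phi_u\cdot u\nabla f\,dx$, since $\nabla\Phi_u = \Phi_{uu}\nabla u + \Phi_{xu}$ contains second-order derivatives of $f$ that are singular as $u\to 0$. The idea is to show that $\nabla\Phi_u\cdot u\nabla f \in L^1(Q_T)$ with a bound depending only on $R,T$, so that this contribution is dominated by $\|\varphi\|_{L^\infty(Q_T)}$. Expanding via $u\nabla f = -\Phi_u\nabla u + uf_x$ produces four terms; the principal one $\Phi_u\Phi_{uu}|\nabla u|^2$ is controlled using~\eqref{eq:O2}---which gives $|\Phi_u\Phi_{uu}| = |uf_u^2 + u^2 f_u f_{uu}| \le C(R)\,uf_u^2$---together with the elementary inequality $uf_u^2|\nabla u|^2 \le 2u|\nabla f|^2 + 2uf_x^2$; the right-hand side is integrable in $Q_T$ thanks to the entropy dissipation identity of Lemma~\ref{lem:prop-class}(ii) and assumption~\eqref{eq:O1}. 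The remaining cross-terms are handled by Cauchy--Schwarz, using~\eqref{eq:O1} in the form $u^3 f_{xu}^2,\,uf_x^2 \in L^\infty$ and pairing against the $L^2(Q_T)$-controlled quantities $\sqrt{u}\,f_u\nabla u$ and $\sqrt{u}\,f_x$. The hypotheses~\eqref{eq:f55},~\eqref{eq:O1},~\eqref{eq:O2} are calibrated precisely so that every singularity of $f$ and its derivatives near $u=0$ is absorbed into the uniform $L^1$- and $L^2$-bounds supplied by the energy and entropy dissipation identities.
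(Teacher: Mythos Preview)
Your overall strategy---pair $\partial_t\Phi(u)$ with a test function, substitute the PDE, integrate by parts once, and bound the three resulting integrals via the energy and entropy identities---is exactly the paper's. Your handling of the ``hard'' term $\int\varphi\,\nabla\Phi_u\cdot u\nabla f$ is correct and closely parallels the paper's bound on its $I_1$: you use~\eqref{eq:O2} to get $|\Phi_{uu}|\le C|f_u|$, compare $u f_u^2|\nabla u|^2$ with $u|\nabla f|^2+u f_x^2$, and mop up cross terms with Cauchy--Schwarz against quantities controlled by~\eqref{eq:O1} and the entropy production.

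There is, however, a genuine gap in your treatment of the two ``easy'' terms. You assert that $\Phi_u$ is continuous, hence bounded, on $\overline\Omega\times[0,R]$. Although the paper states $\Phi\in C^1(\overline\Omega\times[0,\infty))$ in the preamble, this is not in fact implied by~\eqref{eq:f55} alone and fails for admissible examples: for $f_1(x,u)=(1-u^\alpha)/\alpha$ with $\alpha\in(-1,0)$ one has $\Phi_u=-u f_u=u^{\alpha}$, which blows up at $u=0$ even though~\eqref{eq:f55} and~\eqref{eq:O1}--\eqref{eq:O2} are satisfied. Since the lemma's constant $C(R,T)$ must be independent of any lower bound on $u$ (this is precisely how it is used in the compactness argument of Theorem~\ref{th:ex}), your $L^\infty$ bounds on the reaction term and on the first convective term are not uniform.

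The paper repairs this without detouring through $u\nabla f=-\nabla\Phi+\Phi_x+uf_x$. It keeps the first convective and reaction terms as $\int u|\Phi_u|\,|\nabla f|$ and $\int u|\Phi_u|\,|f-\bar f|$, and applies Cauchy--Schwarz in $L^2(u\,dx\,dt)$: the factor $\int u|\Phi_u|^2$ is bounded by~\eqref{eq:O1} (this is the paper's~\eqref{eq:tmp12}), while $\int u|\nabla f|^2$ and $\int u(f-\bar f)^2$ are bounded by the entropy production via Lemma~\ref{lem:prop-class}(ii). Replacing your $L^\infty$ argument for these two terms with this Cauchy--Schwarz step completes your proof.
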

\begin{proof}
For a given test function $\psi \in C([0, T]; W^{1, \infty}(\Omega))$ we have
\begin{align*}
|\langle \partial_t \Phi(u), \psi \rangle|
& = \left|\int_{Q_T} \psi \Phi_u \partial_t u \, dx \, dt \right|
\\
& = \left|\int_{Q_T} \psi \Phi_u (- \operatorname{div}(u \nabla f) + u(f - 
\bar f)) \, dx \, dt \right|
\\
& \le \| \psi \|_{C([0, T]; W^{1, \infty}(\Omega))}
\Bigg(
\int_{Q_T} u | \nabla \Phi_{u} ||\nabla f| \,dx \, dt
+\int_{Q_T} u |\Phi_{u}| |\nabla f|\, dx \, dt
\\
&
\qquad
+\int_{Q_T} u | \Phi_u ||f - \bar f| \,dx \, dt
\Bigg)
=\| \psi \|_{C([0, T]; W^{1, \infty}(\Omega))} (I_1 + I_2 + I_3)
.
\end{align*}
Our goal is to show that the integrals $I_k$ are bounded from above.

By \eqref{eq:O1}, \eqref{eq:O2} there exist $C \ge 0$ and $\varepsilon > 
0$ both independent of~$u$ such that
\begin{gather}
u|f_x|^2 \le C
,
\label{eq:tmp10}
\\
u^3 |f_{xu}|^2 \le C
,
\label{eq:tmp11}
\\
u|\Phi_u|^2 \le C
\label{eq:tmp12}
,
\\
|\Phi_{uu}| \le C |f_u|
\label{eq:tmp13}
\end{gather}
whenever $0 < u < \varepsilon$.  Moreover, if we allow~$C$ to depend on~$T$, 
we can assume that~\eqref{eq:tmp10}--\eqref{eq:tmp12} hold on $\overline 
\Omega \times (0, T]$, since the left-hand sides are continuous and $|f_u|$ is 
positive.

For $I_1$ we have
\begin{align*}
I_1
& =
\int_{Q_T} u \big( | \Phi_{uu}| | \nabla u| + |\Phi_{xu}|\big ) | \nabla f| \, 
dx \,dt
\\
& \le
\int_{Q_T} u \big( C| f_u | | \nabla u| +  u|f_{xu}|\big ) | \nabla f| \, dx 
\,dt
\\
& \le
\int_{Q_T} u \big( C| f_u \nabla u + f_x| + C|f_x| + u|f_{xu}| \big ) | \nabla 
f| \, dx \,dt
\\
& \le
C\int_{Q_T} u |\nabla f|^2 \, dx \,dt
+
\left( 2\int_{Q_T} (Cu|f_x|^2 + u^3|f_{xu}|^2) \, dx \,dt \right)^{1/2}
\left(\int_{Q_T} u |\nabla f|^2 \, dx \,dt\right)^{1/2}
\\
& \le C' \int_0^T \left(\entropyproduction(u) + \sqrt{\entropyproduction(u)} 
\right) \, dt
\\
& \le C' \int_0^T \entropyproduction(u) \, dt
+ C'\sqrt T \left( \int_0^T \entropyproduction(u) \, dt \right)^{1/2}
.
\end{align*}
As we assume an upper bound on $u$, the integral
\begin{equation*}
\int_0^T \entropyproduction(u) \, dt = \entropy(0) - \entropy(T)
\end{equation*}
is bounded, so we see that~$I_1$ is bounded uniformly in~$u$.

Further, we have
\begin{equation*}
I_2+I_3
\le
\left(
\int_{Q_T} u |\Phi_{u}|^2 \, dx \, dt
\right)^{1/2}
\left(2
\int_{Q_T} u( |\nabla f|^2+|f-\bar f|^2)  dx \, dt
\right)^{1/2}
\le
C''
\left(
\int_0^T \entropyproduction(u) \, dt
\right)^{1/2}
,
\end{equation*}
where the last term is bounded.
\end{proof}

\begin{lemma}
\label{lem:ex-smooth}
For 
any smooth probability density $u^0 \colon \overline \Omega \to (0, \infty)$ satisfying the 
non-flux boundary condition, problem \problem{} has a 
classical solution.
\end{lemma}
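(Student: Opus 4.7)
The plan is to reduce \problem{} to a family of local quasilinear parabolic problems via a fixed-point argument on the nonlocal scalar $\bar f(t)$, and then to promote the resulting short-time solution to a global one using the comparison \eqref{eq:linfbound}. Using the positivity of $u^0 \in C^\infty(\overline\Omega)$ together with \eqref{eq:large-u}--\eqref{eq:small-u}, I first pick constants $0 < \sigma < R$ with $\sigma \le u^0 \le R$, and modify $f$ smoothly outside $[\sigma/2, 2R]$ to produce an extension $\tilde f \in C^2(\overline\Omega \times \mathbb R)$ with globally bounded derivatives satisfying $\tilde f_u \le -\delta < 0$. This truncation makes the principal part of the PDE uniformly parabolic, bypassing the possible degeneracy at $u = 0$.

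Next, for any prescribed $\alpha \in C([0,T])$ replacing $\bar f(t)$, the local initial-Neumann problem
\begin{equation*}
\partial_t u = -\Div(u\nabla \tilde f(x,u)) + u(\tilde f(x,u) - \alpha(t)),
\quad u\, \partial_\nu \tilde f = 0,
\quad u\big|_{t=0} = u^0,
\end{equation*}
is a uniformly parabolic quasilinear equation with $C^{2+\beta}$ data, so by the classical Ladyzhenskaya--Solonnikov--Uraltseva theory it admits a unique classical solution $u[\alpha] \in C^{2+\beta,\,1+\beta/2}(\overline{Q_T})$. The standard Schauder continuous-dependence estimates then yield Lipschitz continuity of the map $\Phi(\alpha)(t) := \int_\Omega u[\alpha](x,t)\,\tilde f(x,u[\alpha](x,t))\,dx$ on $C([0,T])$, with Lipschitz constant vanishing as $T \to 0^+$. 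Hence $\Phi$ is a contraction for $T \le T_\ast$ sufficiently small, and its fixed point is a classical solution of the truncated problem on $[0, T_\ast]$.

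Finally, Lemma~\ref{lem:prop-class}(iii), whose proof uses only that $\tilde f_u < 0$ and the no-flux boundary condition, gives $\inf_\Omega \tilde f(\cdot, u^0) \le \tilde f(x, u(x,t)) \le \sup_\Omega \tilde f(\cdot, u^0)$ on $[0, T_\ast]$, whence $\sigma \le u \le R$ throughout. The truncation is therefore inactive, so $u$ in fact solves the original \problem; since the resulting bounds are time-independent, one iterates the short-time construction on $[kT_\ast,(k+1)T_\ast]$ with the same existence step, producing a global classical solution. The main obstacle is the coupling of nonlocality with potential degeneracy near $u=0$, which would break both standard parabolic theory and the contraction; the truncation-plus-comparison strategy decouples these issues, at the cost of relying on the (already established) bound \eqref{eq:linfbound}.
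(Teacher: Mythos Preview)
Your route differs from the paper's: the paper invokes Amann's semigroup theory \cite[Theorem~13.1]{Am93}, which directly produces a maximal solution of the nonlocal quasilinear problem, and then bootstraps by freezing $\bar f(t)$ as a known coefficient. Your scheme---truncate to force uniform parabolicity, run a Banach fixed point on $\alpha\mapsto\int u[\alpha]\tilde f(\cdot,u[\alpha])$, then undo the truncation via the comparison \eqref{eq:linfbound}---is more elementary and avoids the Amann machinery, at the cost of an extra layer (the contraction) that Amann absorbs.

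There is, however, a genuine gap in the last step. From Lemma~\ref{lem:prop-class}(iii) you obtain
\[
c_2:=\inf_\Omega \tilde f(\cdot,u^0)\le \tilde f(x,u(x,t))\le \sup_\Omega \tilde f(\cdot,u^0)=:c_1,
\]
and then assert $\sigma\le u\le R$. This does not follow: the $\tilde f$-bounds give only $\tilde m_{c_1}(x)\le u(x,t)\le \tilde m_{c_2}(x)$, where $\tilde m_{c_i}$ are the $x$-dependent level functions of $\tilde f$, and there is no reason these should lie in $[\sigma,R]$, nor even in the untruncated window $[\sigma/2,2R]$. (Concretely, $\tilde m_{c_2}(x)\le 2R$ would require $\sup_\Omega f(\cdot,2R)\le \inf_\Omega f(\cdot,u^0)$, which is not guaranteed by $u^0\le R$.) The fix is to choose the truncation window via the level functions rather than via constant bounds on $u^0$: as in the paper, invoke \cite[Remark~3.4]{KV17} and \eqref{eq:large-u}--\eqref{eq:small-u} to find $m_{c_1}\le u^0\le m_{c_2}$ with $m_{c_1},m_{c_2}$ strictly positive and bounded, then set $\sigma=\inf_\Omega m_{c_1}$, $R=\sup_\Omega m_{c_2}$ and truncate outside $[\sigma/2,2R]$. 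The comparison then yields $m_{c_1}\le u\le m_{c_2}$, hence $\sigma\le u\le R$, and your argument closes. Note also that applying Lemma~\ref{lem:prop-class}(iii) at the fixed point requires $\int_\Omega u(\cdot,t)=1$; this holds because at the fixed point the mass $M(t)=\int_\Omega u$ satisfies $M'=\alpha(1-M)$ with $M(0)=1$, but you should say so.
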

\begin{proof} Equation~\eqref{eq:p1} can be cast in the form
\begin{equation} \label{eq:par}
\partial_t u = - uf_u \Delta u - \nabla u \cdot (f_x + f_u \nabla u)
- u (f_{xx} + 2f_{xu} \cdot \nabla u + f_{uu} | \nabla u |^2 - f+\bar f)
.
\end{equation}
Since the initial data $u^0$ is strictly positive, any classical solution $u$ is \emph{a priori} bounded away 
from~$0$ and $\infty$.  Indeed, evoking \cite[Remark 3.4]{KV17}, we can find $m_{c_1}$ and $m_{c_2}$ strictly positive
such that $c_2 \le 0 \le c_1$ and
\begin{equation*}
m_{c_1}(x) \le u^0(x) \le m_{c_2}(x) \quad (x \in \Omega)
.
\end{equation*}
Then \eqref{eq:linfbound} and \eqref{eq:f2}   yield
\begin{equation*}
m_{c_1}(x) \le u(x, t) \le m_{c_2}(x), \quad (x, t) \in \Omega \times (0, 
\infty).
\end{equation*}
Hence we can avoid 
degeneracies or singularities in \eqref{eq:par} and apply \cite[Theorem 13.1]{Am93} to secure existence and uniqueness of a maximal weak
solution $\tilde u$ in the sense of Amann. This solution is global in time 
provided we can control its norm in a certain Sobolev space. Viewing $$\bar 
f(t):=\int_{\Omega} \tilde u(x,t)f(x,\tilde u(x,t))\, dx$$ as a given 
coefficient, we ``deactivate'' the nonlocal term  in  \eqref{eq:par}. 
Bootstrapping and employing the results of \cite[Sections 14 and 15]{Am93}, we 
can improve the regularity of $\bar f$ (as a function of time) and that of 
$\tilde u$ (as a function of time and space).  
Integrating~\eqref{eq:p1} with $u = \bar u$ in space, we see that the mass is 
conserved along the flow.  We conclude that $\tilde u$ is actually a global 
smooth solution to \problem{}.  \end{proof}

\begin{definition} \label{d:ws}
Let $u^0 \in L^\infty(\Omega)$ be a probability density.  A function~$u \in 
L_+^\infty(Q_T)$ is called a \emph{weak solution} 
of~\eqref{eq:p1}--\eqref{eq:p4} on $[0, T]$ if $\int_\Omega u(x,t)\,d 
x=1$ for a.a. $t\in(0,T)$, $\Phi(\cdot, u(\cdot)) \in L^2(0, T; 
H^1(\Omega))$, and
\begin{equation}
\label{eq:def1}
\int_0^T
\int_\Omega
(u \partial_t \varphi + ( - \nabla \Phi + \Phi_x + u f_x) \cdot \nabla \varphi 
+ (f - \bar f) u \varphi) \,d x \,d t
=
\int_\Omega u^0(x) \varphi(x, 0) \,d x
\end{equation}
for any function $\varphi \in C^1(\overline \Omega \times [0, T])$ such that 
$\varphi(x, T) = 0$.
A function $u \in L^\infty_\text{loc}([0, \infty); L_+^\infty(\Omega))$ is 
called a \emph{weak solution} of \eqref{eq:p1}--\eqref{eq:p4} on $[0, 
\infty)$ if for any~$T > 0$ it is a weak solution on~$[0, T]$.
\end{definition}

\begin{theorem}[Existence of weak solutions]
\label{th:ex}
Suppose that~$f$ satisfies~\eqref{eq:f1a}--\eqref{impm}.  Then for any 
probability density ~$u^0 \in L^\infty_+(\Omega)$ there exists a weak 
solution $u \in L^\infty_+(\Omega \times (0, \infty))$ of problem 
\eqref{eq:p1}--\eqref{eq:p4} enjoying the following properties:
\begin{enumerate}
\item $u$ satisfies the energy inequality
\begin{equation}
\label{eq:energy}
\partial_t
\energy(u)
\le
\int_{\Omega}
\big (
- | \nabla \Phi |^2
+ (\Phi_x + uf_x) \cdot \nabla \Phi
+ u(f-\bar f)\Phi
\big )
\cd x
\end{equation}
in the sense of measures and
\begin{equation}
\label{eq:init-energy}
\esslimsup_{t \to +0} \energy(u(t)) \le \energy(u^0);
\end{equation}
\item
$u$ satisfies the entropy dissipation 
inequality
\begin{equation}
\label{eq:entropy}
\partial_t \entropy(u)
\le - \entropyproduction(u)
\end{equation}
in the sense of measures and
\begin{equation}
\label{eq:init-entropy}
\esssup_{t > 0} \entropy(u(t)) \le \entropy(u^0)
.
\end{equation}
\end{enumerate}
\end{theorem}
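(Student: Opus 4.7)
The plan is to approximate the initial datum by smooth strictly positive densities, use Lemma \ref{lem:ex-smooth} to produce classical solutions, establish uniform estimates from Lemma \ref{lem:prop-class} and Lemma \ref{lem:deriv}, and finally pass to the limit via a compactness argument. First, I would construct a sequence of probability densities $u^0_n \in C^\infty(\overline\Omega)$ with $u^0_n \ge \varepsilon_n > 0$ satisfying the no-flux boundary condition, such that $u^0_n \to u^0$ in $L^p(\Omega)$ for every $p<\infty$ and $\|u^0_n\|_{L^\infty} \le \|u^0\|_{L^\infty} + o(1)$; a standard mollification combined with a small additive constant $\varepsilon_n$ works. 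For each $n$, Lemma \ref{lem:ex-smooth} yields a classical solution $u_n$ on $[0,\infty)$.

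The key uniform estimates come as follows. From \eqref{eq:linfbound} and \eqref{eq:f2}, since $f(\cdot,u^0_n)$ is bounded above by $\sup f(\cdot,u^0)$ up to a vanishing error, one has $u_n \ge m_{c_1^n}$ and $u_n \le m_{c_2}$ with $c_2$ uniform in $n$ (the lower bound $c_1^n$ may degenerate, which is harmless). Thus $\|u_n\|_{L^\infty(Q_T)}\le R$ uniformly. The entropy dissipation identity \eqref{eq:entropy-diss-class} integrated in time gives
\begin{equation*}
\int_0^T \entropyproduction(u_n)\,dt \le \entropy(u^0_n) \le C,
\end{equation*}
which, together with the identity $u_n\nabla f = -\nabla\Phi(u_n) + \Phi_x + u_n f_x$, yields a uniform $L^2(0,T;H^1(\Omega))$ bound on $\Phi(\cdot,u_n)$; Lemma \ref{lem:deriv} then provides a uniform bound on $\partial_t \Phi(u_n)$ in $[C([0,T];W^{1,\infty})]^*$. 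By the Aubin-Lions-Simon lemma, $\Phi(\cdot,u_n) \to v$ strongly in $L^2(Q_T)$ and a.e.\ along a subsequence. Since $\Phi$ is strictly increasing in $u$ with $\Phi_u=-uf_u>0$ on $(0,\infty)$ and $\Phi(\cdot,0)=0$, we can invert and deduce $u_n \to u$ a.e., hence in every $L^p(Q_T)$ for $p<\infty$ by dominated convergence.

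With this strong convergence in hand, passing to the limit in the weak formulation \eqref{eq:def1} is straightforward for every term once the nonlocal pairing is handled. By \eqref{eq:f1b}, $u_n f(\cdot,u_n) \to u f(\cdot,u)$ and $u_n f_x(\cdot,u_n) \to u f_x(\cdot,u)$ uniformly (after passing to a subsequence) on the subsets where $u_n$ is bounded away from $0$, and globally in $L^p$ by dominated convergence using the continuity of $uf$ and $uf_x$ up to $u=0$; this gives $\bar f_n \to \bar f$ in $L^\infty(0,T)$ and convergence of $u_n(f(\cdot,u_n)-\bar f_n)\varphi$ in $L^1$. For the flux, $-\nabla \Phi(u_n)$ converges weakly in $L^2$, while $\Phi_x(\cdot,u_n)+u_n f_x(\cdot,u_n)$ converges strongly, so the whole flux passes to the limit against $\nabla\varphi$. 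Mass conservation for each $u_n$ passes to $\int u(\cdot,t)\,dx=1$ a.e. The energy inequality \eqref{eq:energy} follows from \eqref{eq:energy-id} by weak lower semicontinuity of $\int|\nabla\Phi|^2$ and strong convergence of the remaining terms; analogously, \eqref{eq:entropy} follows from \eqref{eq:entropy-diss-class} using weak lower semicontinuity of $\entropyproduction$ (written with $\frac{1}{u}|-\nabla\Phi+\Phi_x+uf_x|^2$, which is convex in the pair $(u,\nabla\Phi)$ on $\{u>0\}$). The initial conditions \eqref{eq:init-energy} and \eqref{eq:init-entropy} are obtained by integrating \eqref{eq:energy-id} and \eqref{eq:entropy-diss-class} over $[0,t]$ and using $\entropy(u^0_n) \to \entropy(u^0)$ together with Fatou as $t\to 0$.

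The main obstacle, in my view, is the degenerate and possibly singular nature of $f$ near $u=0$: the uniform lower bound on the approximants $u_n$ is lost in the limit, so one cannot directly invoke continuity of $f$ where $u$ vanishes. This is what forces the formulation of the weak equation in terms of $\Phi$ and of the combination $-\nabla\Phi+\Phi_x+uf_x$ (which is continuous up to $u=0$ thanks to \eqref{eq:f1b} and \eqref{eq:O1}), and what makes assumptions \eqref{eq:O1}--\eqref{eq:O2} indispensable for Lemma \ref{lem:deriv} to produce the equicontinuity needed for Aubin-Lions. A secondary but subtle point is the lower semicontinuity of $\entropyproduction$ in its singular form; one handles it by restricting to $\{u\ge\sigma\}$, applying convexity, and letting $\sigma\to 0$ by monotone convergence.
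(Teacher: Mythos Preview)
Your sketch is correct and follows essentially the same approximation-and-compactness strategy as the paper: smooth strictly positive approximants of the data, classical solutions from Lemma~\ref{lem:ex-smooth}, uniform $L^\infty$ bounds from Lemma~\ref{lem:prop-class}, an $L^2_tH^1_x$ bound on $\Phi(\cdot,u_n)$ combined with Lemma~\ref{lem:deriv} to apply Aubin--Lions, and passage to the limit with weak lower semicontinuity for the quadratic terms. The only noteworthy deviation is that you obtain the $H^1$ bound on $\Phi$ from the entropy dissipation identity (using $u_n\nabla f_n=-\nabla\Phi_n+\Phi_{xn}+u_nf_{xn}$ and the uniform upper bound on $u_n$), whereas the paper derives it directly from the energy identity~\eqref{eq:energy-id}; both routes work, and the paper's rigorous handling of the lower semicontinuity of $\entropyproduction$ proceeds via the truncation $1/\max(u_n,\delta)$ followed by monotone convergence, which is exactly the precise version of your $\{u\ge\sigma\}$ restriction.
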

\begin{proof}
It is easy to see that we can approximate the initial data~$u^0$ by smooth and strictly positive probability densities
$u^0_n$ satisfying the boundary condition in such a way that
\begin{gather}
\| u^0_n \|_{L^\infty(\Omega)} \le C
,
\\
u^0_n \to u^0 \quad \text{weakly$^*$ in }L^\infty(\Omega)\ \text{and a.e. in}\ \Omega
\label{eq:sp2}
,
\\
\energy(u^0_n) \to \energy(u^0)
,
\\
\entropy(u^0_n) \to \entropy(u^0)
.
\end{gather} The last two convergences can be secured using the Lebesgue 
Dominated Convergence Theorem. Let $u_n$ be the classical solution 
starting from $u^0_n$, which exists by Lemma~\ref{lem:ex-smooth}.

Put
\begin{align*}
f_n & = f(x, u_n(x, t)), & f_{xn} & = f_x(x, u_n(x, t)), \\
\Phi_n & = \Phi(x, u_n(x, t)), & \Phi_{xn} & = \Phi_x(x, u_n(x, t)), \\
\Psi_n & = \Psi(x, u_n(x, t)), & E_n & = E(x, u_n(x, t)).
\end{align*}
Given $T > 0$, by Lemma~\ref{lem:prop-class} the sequence $\{u_n\}$ is bounded 
in $L^\infty(Q_T)$, and so are the sequences $\{u_n f_n\}$, $\{u_n f_{xn}\}$, 
$\{\Phi_n\}$, $\{\Phi_{xn}\}$, $\{\Psi_n\}$, and $\{E_n\}$.  It follows from 
the energy identity~\eqref{eq:energy-id} that
\begin{equation} \label{eq:star}
\partial_t \energy(u_n) \le - \frac 12 \int_\Omega |\nabla \Phi_n|^2 \,dx + C
,
\end{equation}
whence the integral
\begin{equation*}
\int_{Q_T}|\nabla \Phi_n|^2 \, dx
\le 2\big(\energy(u_n^0) - \energy(u_n(T)) + CT\big)
\end{equation*}
is bounded, i.~e.\ $\{\Phi_n\}$ is bounded in $L^2(0, T; H^1(\Omega))$.  By 
Lemma~\ref{lem:deriv} the derivatives $\{\partial_t \Phi_n\}$ are bounded in 
$[C(0, T; W^{1, \infty}(\Omega))]^*$.  Hence, \cite[Corollary 
7.9]{Rou} implies that $\{\Phi_n\}$ is compact in $L^2(Q_T)$.  This is 
true for any~$T$, so $\{\Phi_n\}$ is compact in $L^2_\text{loc}([0, 
\infty); L^2(\Omega))$ and there is no loss of generality that $\Phi_n 
\to \phi$ in this space and a.~e.\ in $\Omega \times (0, \infty)$.

Fix $(x, t) \in \Omega \times (0, \infty)$ such that
\begin{equation*}
\Phi(x, u_n(x, t)) = \Phi_n(x, t) \to \phi(x, t)
.
\end{equation*}
Assuming that $\| u_n \|_{L^\infty(\Omega \times (0, \infty))} \le R$ and 
taking into account that $\Phi$ increases in~$u$, we have $\Phi_n(x, t) \le 
\Phi(x, R)$ and so $0 \le \phi(x, t) \le \Phi(x, R)$.  As~$\Phi$ is continuous 
in~$u$, there exists a unique $u(x, t) \in [0, R]$ such that $\Phi(x, u(x, t)) 
= \phi(x, t)$, and as the inverse of~$\Phi$ with respect to~$u$ is continuous 
in~$u$ as well, we have $u_n(x, t) \to u(x, t)$.  Thus, we have defined a 
function $u \in L^\infty_+(\Omega \times (0, \infty))$ such that for any $T > 0$ 
we have
\begin{gather}
\label{eq:sp-conv}
\left.
\begin{aligned}
u_n & \to u \\
u_n f_n & \to uf \\
u_n f_{xn} & \to uf_x \\
\Phi_n & \to \Phi \\
\Phi_{xn} & \to \Phi_x \\
\Psi_n & \to \Psi
\end{aligned}
\right\}
\begin{tabular}{l}
a.~e.\ in  $Q_T$, \\
strongly in any $L^p(Q_T)$, $1 \le p < \infty$, \\
weakly* in $L^\infty(Q_T)$, \\
and in the sense of distributions,
\end{tabular}
\\
\bar f_n \to \bar f
\\
\nabla \Phi_n \to \nabla \Phi \quad \text{weakly in } L^2(Q_T)
.
\label{eq:sp-conv-a}
\end{gather}
where we write $\Phi$ for $\Phi(\cdot, u(\cdot))$, etc.

The function $u$ is a weak solution of \eqref{eq:p1}--\eqref{eq:p4} on $[0, 
T]$ as it follows from~\eqref{eq:sp2} and 
\eqref{eq:sp-conv}--\eqref{eq:sp-conv-a} that one can pass to the limit in the 
weak setting for the approximate solution
\begin{multline}
\label{eq:sp3}
\int_0^T
\int_\Omega
(u_n \partial_t \varphi + ( - \nabla \Phi_n + \Phi_{xn} + u_n f_{xn}) \cdot 
\nabla \varphi + (f_n - \bar f_n) u_n \varphi) \,d x \,d t
\\
=
\int_\Omega u^0_n(x) \varphi(x, 0) \,d x
,
\end{multline}
where $\varphi$ is an admissible test function.

In order to show that $u$ satisfies the energy inequality on $[0, T]$ in the 
sense of measures, we take a smooth nonnegative test function $\chi \in 
C^\infty(\R)$ vanishing outside of $[0, T]$ and rewrite the energy identity 
from Lemma~\ref{lem:prop-class} in the sense of measures for the approximate 
solutions:
\begin{multline*}
- \int_{Q_T} \Psi_n \chi'(t) \,d x \,d t
=
- \int_{Q_T} |\nabla \Phi_n|^2 \chi(t) \,d x \,d t
\\
+ \int_{Q_T} \chi(t) (\Phi_{xn} + u_n f_{xn}) \cdot \nabla \Phi_n \,d x 
\,d t
+ \int_{Q_T} u_n (f_n - \bar f_n) \Phi_n \chi(t) \,d x cd t
\end{multline*}
Here one can use convergences~\eqref{eq:sp-conv} to pass to the limit in all 
the terms but for the first one on the right-hand side.  Further,  
\eqref{eq:sp-conv-a} implies that $\sqrt \chi \, \nabla \Phi_n \to \sqrt 
\chi \, \nabla \Phi$ weakly in $L^2(Q_T)$, so
\begin{equation*}
\int_{Q_T} \chi | \nabla \Phi |^2 \,d x \,d t
\le
\liminf_{n \to \infty}
\int_{Q_T} \chi | \nabla \Phi_n |^2 \,d x \,d t
,
\end{equation*}
and the energy inequality follows.

Let us check~\eqref{eq:init-energy}.  By \eqref{eq:star}, the approximate solutions 
satisfy
\begin{equation*}
\esssup_{t \in (0, \varepsilon)} \energy(u_n(t)) \le \energy(u_n^0) + 
C\varepsilon
.
\end{equation*}
It follows from~\eqref{eq:sp-conv} that
\begin{equation*}
\energy(u_n) \to \energy(u) \quad \text{weakly* in } L^\infty(0, \varepsilon)
,
\end{equation*}
so we get
\begin{align*}
\esssup_{t \in (0, \varepsilon)} \energy(u(t))
& \le
\liminf_{n \to \infty} \esssup_{t \in (0, \varepsilon)} \energy(u_n(t))
\\
& \le
\lim_{n \to \infty} \energy(u_n^0) + C\varepsilon
\\
& =\energy(u^0) + C\varepsilon
.
\end{align*}
Now sending $\varepsilon \to 0$ we recover~\eqref{eq:init-energy}.

Let us show that $u$ satisfies the entropy dissipation inequality on $[0, T]$ 
in the sense of measures.  Let $\chi \in C^\infty$ be a smooth nonnegative 
test function vanishing outside of $[0, T]$.  By Lemma~\ref{lem:prop-class}, 
the approximate solutions satisfy the entropy dissipation identity.  It can be 
recast in the sense of measures as follows:
\begin{multline*}
- \int_{Q_T} E_n \chi'(t) \,d x \,d t
=
- \int_{Q_T} \chi(t) u_n (f_n - \bar f_n)^2 \,d x \,d t
\\
- \int_{u_n > 0} \frac{\chi(t)}{u_n} | - \nabla \Phi_n + \Phi_{xn} + u_n 
f_{xn}|^2 \,d x \,d t
.
\end{multline*}
Consequently, for any $\delta > 0$ we have
\begin{multline}
\label{eq:sp4}
- \int_{Q_T} E_n \chi'(t) \,d x \,d t
\le
- \int_{Q_T} \frac{\chi(t)}{\max(u_n, \delta)} (u_n (f_n - \bar f_n))^2 
\,d x \,d t
\\
- \int_{Q_T} \frac{\chi(t)}{\max(u_n, \delta)}
| - \nabla \Phi_n + \Phi_{xn} + u_n f_{xn}|^2 \,d x \,d t
.
\end{multline}
Observe that
\begin{gather}
\label{eq:mery}
\frac{\chi(t)}{\max(u_n, \delta)}
\to
\frac{\chi(t)}{\max(u, \delta)}
\begin{tabular}{l}
a.~e.\ in  $Q_T$, \\
strongly in any $L^p$, $1 \le p < \infty$, \\
and weakly* in $L^\infty(Q_T)$,
\end{tabular}
\\ \label{eq:vk1}
v_n:=- \nabla \Phi_n + \Phi_{xn} + u_n f_{xn}
\to - \nabla \Phi + \Phi_{x} + u f_{x} \quad \text{weakly in 
$L^2(\Omega)$}.
\end{gather}
In \cite[claim (3.24)]{KV17} it was proved that
\begin{multline}
\int_{Q_T} \frac{\chi(t)}{\max(u, \delta)}
| - \nabla \Phi + \Phi_{x} + u f_{x}|^2 \,d x \,d t
\\ \leq \liminf_{n\to \infty}
\int_{Q_T} \frac{\chi(t)}{\max(u_n, \delta)}
| - \nabla \Phi_n + \Phi_{xn} + u_n f_{xn}|^2 \,d x \,d t
\label{f:leon}\end{multline}
and using~\eqref{eq:sp-conv}, we pass to the limit in~\eqref{eq:sp4} obtaining
\begin{multline*}
- \int_{Q_T} E \chi'(t) \,d x \,d t
\le
- \int_{Q_T} \frac{\chi(t)}{\max(u, \delta)} (u (f-\bar f))^2 \,d x \,d t
\\
- \int_{Q_T} \frac{\chi(t)}{\max(u, \delta)}
| - \nabla \Phi + \Phi_{x} + u f_{x}|^2 \,d x \,d t
.
\end{multline*}
On the set $\{(x, t) \in Q_T \colon u(x, t) = 0\}$ we have $uf_x = 0$ (by 
virtue of~\eqref{eq:f1d}), $\Phi_x = 0$ and $\Phi = 0$, whence also $\nabla 
\Phi = 0$ a.~e.\ on this set.  Thus, we can write
\begin{multline*}
- \int_{Q_T} E \chi'(t) \,d x \,d t
\le
- \int_{Q_T} \frac{\chi(t)}{\max(u, \delta)} (u (f-\bar f))^2 \,d x \,d t
\\
- \int_{u > 0} \frac{\chi(t)}{\max(u, \delta)}
| - \nabla \Phi + \Phi_{x} + u f_{x}|^2 \,d x \,d t
\end{multline*}
Letting $\delta \to 0$, by Beppo Levi's Theorem we obtain the entropy 
inequality.

Inequality~\eqref{eq:init-entropy} is proved in the same way 
as~\eqref{eq:init-energy} given that it holds for the approximate solutions.
\end{proof}

\begin{theorem}[Entropy-entropy production inequality]
\label{th:eep}
Suppose that~$f$ satisfies~\eqref{eq:f1a}--\eqref{eq:f5}, \eqref{impm}. Assume 
that the second of the alternatives in \eqref{eq:large-u} holds, and 
the limit is uniform w.r.t. $x$.  Let $U \subset L^\infty(\Omega)$ be 
a set of probability densities such that for any $u \in U$, we have 
$\Phi(\cdot, u(\cdot)) \in H^1(\Omega)$ and
\begin{equation}
\label{eq:eep-a}
\sup_{u \in U} \entropy(u) < \infty
.
\end{equation}
Then there exists $C_U$ such that
\begin{equation}
\label{eq:eep-b}
\entropy(u) \le C_U \entropyproduction(u) \quad (u \in U)
.
\end{equation}
\end{theorem}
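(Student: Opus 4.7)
The plan is to reduce~\eqref{eq:eep-b} to Theorem~\ref{th:ineq-main} by approximating an arbitrary $u\in U$ by smooth, strictly positive probability densities.

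First I would check that $U$ itself is uniformly integrable. Combining the hypothesis $\sup_{u\in U}\entropy(u)<\infty$ with the uniform second alternative in~\eqref{eq:large-u}, Remark~\ref{rem:ui} applies: the de la Vall\'ee-Poussin argument shows that $E(x,u)/u\to\infty$ uniformly in $x$ as $u\to\infty$, and this translates the entropy bound into uniform integrability of~$U$.

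Given $u\in U$, I would then choose smooth strictly positive probability densities $u_n$ with $u_n\to u$ a.e. A concrete construction is: truncate $u\in L^\infty(\Omega)$ to $(u+\tfrac1n)\wedge n$, mollify with a strictly positive kernel at a scale $\varepsilon_n\to 0$ (after periodic extension or reflection across $\partial\Omega$), and renormalize to unit mass. A standard Chebyshev-type split shows that these operations preserve uniform integrability quantitatively, so the enlarged family $U\cup\{u_n:u\in U,\,n\ge 1\}$ remains uniformly integrable with modulus controlled by that of~$U$. Theorem~\ref{th:ineq-main} applied at $a=\bar f_n:=\int u_n f_n\,dx$ then yields, with a constant $C_U$ independent of $n$ and $u$,
\begin{equation*}
\entropy(u_n)\le C_U\left[\int_\Omega u_n(f_n-\bar f_n)^2\,dx+\int_\Omega u_n|\nabla f_n|^2\,dx\right]=C_U\,\entropyproduction(u_n),
\end{equation*}
where the last equality uses $\Phi_u=-uf_u$ to rewrite $u_n\nabla f_n=-\nabla\Phi_n+\Phi_{xn}+u_n f_{xn}$, hence $u_n|\nabla f_n|^2=\frac{1}{u_n}|{-}\nabla\Phi_n+\Phi_{xn}+u_n f_{xn}|^2$ on $[u_n>0]$.

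The main technical obstacle is the passage to the limit $n\to\infty$. On the left, $\entropy(u_n)\to\entropy(u)$ follows from Vitali's theorem and uniform integrability of $\{E(\cdot,u_n)\}$ (using the same super-linear growth of $E$ cited above). The nonlocal quadratic term in $\entropyproduction(u_n)$ converges by a.e.\ convergence together with the bound~\eqref{eq:f5}. The delicate point is the gradient term: we need $\limsup_n\entropyproduction(u_n)\le\entropyproduction(u)$, which is the \emph{opposite} direction to the lower semicontinuity provided by~\eqref{f:leon}. I would address this by tailoring the approximation so that the convergence is strong rather than merely weak. A clean route is to mollify $\phi:=\Phi(\cdot,u)\in H^1(\Omega)$ itself, add a small positive offset to ensure strict positivity, and then define $u_n$ by inverting the strictly monotone map $\xi\mapsto\Phi(x,\xi)$; this yields $\nabla\Phi_n\to\nabla\Phi$ strongly in $L^2(\Omega)$, and combined with~\eqref{eq:f1d} and a dominated-convergence argument on $[u>0]$ one obtains $\entropyproduction(u_n)\to\entropyproduction(u)$ in the only nontrivial case $\entropyproduction(u)<\infty$. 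Combining the convergences gives~\eqref{eq:eep-b}.
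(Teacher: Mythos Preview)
Your overall architecture is close to the paper's, and in particular the ``clean route'' you propose at the end---mollify $\phi=\Phi(\cdot,u)\in H^1$ and invert $\xi\mapsto\Phi(x,\xi)$ to obtain smooth $u_n$---is exactly the approximation the paper uses (it cites the construction from \cite[Theorem~1.7]{KV17}). The divergence is in \emph{which} inequality one passes to the limit.

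You apply Theorem~\ref{th:ineq-main} to $u_n$ and then try to show $\limsup_n \entropyproduction(u_n)\le \entropyproduction(u)$. This is the wrong direction relative to the natural lower semicontinuity, and your justification does not close the gap on the set $[u=0]$. There the integrand of the gradient part is $\frac{1}{u_n}\lvert{-}\nabla\Phi_n+\Phi_{xn}+u_n f_{xn}\rvert^2$ with $u_n\to 0$; strong $L^2$-convergence of $\nabla\Phi_n$ to $0$ on $[u=0]$ does not prevent the quotient from staying bounded away from zero or even blowing up. Concretely, on the interior of $[u=0]$ (away from the mollification layer) your $\Phi_n$ is constant, so the integrand reduces to $\frac{1}{u_n}\lvert\Phi_{xn}+u_n f_{xn}\rvert^2$, and there is no reason for $u_n\lvert f_x(x,u_n)\rvert^2$ to tend to zero under the hypotheses of Theorem~\ref{th:eep}. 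You invoke \eqref{eq:f1d}, but that assumption is \emph{not} part of the theorem (only \eqref{eq:f1a}--\eqref{eq:f5} and the uniform second alternative of \eqref{eq:large-u} are assumed); and even granting \eqref{eq:f1d}, one would still need something like $u\,f_x^2\to 0$ as $u\to 0$, which is strictly stronger than \eqref{eq:O1}. Thus your limit step can fail precisely when $\entropyproduction(u)<\infty$, the only case that matters.

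The paper avoids this degeneracy by not passing to the limit in the full inequality at all. It uses the \emph{stronger} auxiliary estimate of Lemma~\ref{lem:ineq-main-lemma}, which carries the cutoff $[u_n\ge\sigma]$ with $\sigma>0$ fixed; on that set the weight $\frac{1}{u_n}$ is bounded by $\sigma^{-1}$, so a Reverse Fatou argument (together with $\limsup_n 1_{[u_n\ge\sigma]}\le 1_{[u\ge\sigma]}$) yields the limit inequality \eqref{eq:_goal} for the non-smooth $u$. The reduction in Section~\ref{s:ineq} then combines \eqref{eq:_goal} with the conic inequality \cite[Theorem~2.9]{KV17}, which is already available for non-smooth $u$, to obtain \eqref{eq:eep-b}. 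In short: same approximation, but the paper takes the limit in Lemma~\ref{lem:ineq-main-lemma} rather than in Theorem~\ref{th:ineq-main}, and that is exactly what makes the argument go through without extra structural assumptions on $f$ near $u=0$.
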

\begin{proof}
Let us show that \eqref{eq:goalfa} holds with~$U$ merely satisfying the 
hypotheses of Theorem \ref{th:eep}.  According to Remark~\ref{rem:ui}, condition~\eqref{eq:eep-a} ensures the 
uniform integrability of~$U$. As explained before Lemma \ref{lem:ineq-main-lemma}, it suffices to ensure that 
inequality~\eqref{eq:_goal} holds for~$U$.

Given $u \in U$, we use the construction presented in the proof 
of~\cite[Theorem~1.7]{KV17} and approximate the function $\Phi(\cdot, 
u(\cdot))$ with smooth functions $\Phi_n$ in such a way that
\begin{equation*}
\Phi_n \to \Phi(\cdot, u(\cdot)) \quad \text{in } H^1 \text{and a.~e.\ in } 
\Omega,
\end{equation*}
while the functions $u_n \in C^2(\Omega)$ satisfying $\Phi(x, u_n(x)) = \Phi_n(x)$ 
are well-defined and
\begin{equation}
\left.
\begin{array}{l}
\|u_n\|_{L^\infty} \le C
,
\\
u_n \to u \quad \text{a.~e.\ in } \Omega
.
\end{array}
\right\}
\end{equation}
There is no loss of generality in assuming that~$u_n$ are probability 
measures, since we can normalize them taking into account that 
\begin{equation*}
\|u_n\|_{L^1(\Omega)} \to \|u\|_{L^1(\Omega)} = 1
.
\end{equation*}

By Lemma~\ref{lem:ineq-main-lemma} with $a = \bar f_n$, we have
\begin{equation}
\label{eq:tmp8}
\int_{[u_n \ge \sigma]} u_n\left((f_n - \bar f_n)^2
+ |\nabla f_n|^2 \right) dx
\ge \varkappa \bar f_n^2
\end{equation}
with $\sigma > 0$ and $\varkappa >0$ independent of~$n$, where as usual $f_n$ 
stands for $f(x, u_n(x))$, etc.  Inequality~\eqref{eq:tmp8} can be written as
\begin{equation*}
\int_{\Omega}
\left(1_{[u_n \ge \sigma]}u_n(f_n - \bar f_n)^2
+
\frac{1_{[u_n \ge \sigma]} }{u_n}| - \nabla \Phi_n + \Phi_{xn} + u_nf_{xn}|^2 
\right) \, dx
\ge \varkappa \bar f_n^2
.
\end{equation*}
As the integrand vanishes whenever $u_n < \sigma$, one can pass to the limit 
as $n \to \infty$ (cf.~\cite{KV17}).  Observing that
\begin{equation*}
\limsup_{n \to \infty} 1_{[u_n \ge \sigma]}(x) \le 1_{[u \ge \sigma]}(x) \quad 
\text{a.~e.\ in } \Omega,
\end{equation*}
and employing the  Reverse Fatou Lemma for products \cite{KV19A} we obtain
\begin{equation*}
\int_{\Omega}
\left(1_{[u \ge \sigma]}u(f - \bar f)^2
+
\frac{1_{[u \ge \sigma]} }{u}| - \nabla \Phi + \Phi_{x} + uf_x|^2 \right) \, 
dx
\ge \varkappa \bar f^2
,
\end{equation*}
which is stronger than~\eqref{eq:_goal}.
\end{proof}

\begin{theorem}[Convergence to equilibrium]
\label{th:convergence}
Suppose that~$f$ satisfies~\eqref{eq:f1a}--\eqref{eq:f5}, \eqref{impm}. Assume 
that the second of the alternatives in \eqref{eq:large-u} holds, and 
the limit is uniform w.r.t. $x$.    Let~$u$ be a weak solution 
of~\eqref{eq:p1}--\eqref{eq:p4} with the initial data $u^0 \in 
L^\infty_+(\Omega)$, $\int_\Omega u^0=1$.  Suppose that~$u$ satisfies 
the entropy dissipation inequality~\eqref{eq:entropy} and
inequality~\eqref{eq:init-entropy}. Then~$u$ exponentially converges to~$m$ in 
the sense of entropy:
\begin{equation}
\label{eq:convergence-a}
\entropy(u(t)) \le \entropy(u^0) \mathrm{e}^{-\gamma t} \quad \text{a.~a.\ } t 
> 0
,
\end{equation}
where $\gamma > 0$ can be chosen uniformly over initial data satisfying
\begin{equation}
\label{eq:convergence-b}
\entropy(u^0) \le C
\end{equation}
with some $C > 0$.
\end{theorem}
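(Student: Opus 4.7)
The plan is a standard entropy-entropy production + Gronwall argument, leveraging Theorem~\ref{th:eep}.

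First I would form the appropriate set of probability densities along the trajectory. By the weak-solution definition (Definition~\ref{d:ws}), $\Phi(\cdot, u(\cdot,t)) \in H^1(\Omega)$ for almost every $t>0$. By~\eqref{eq:init-entropy}, $\entropy(u(t)) \le \entropy(u^0) \le \sup_{\entropy(v^0)\le C}\entropy(v^0) = C$ for a.e.\ $t$, uniformly over initial data satisfying~\eqref{eq:convergence-b}. Therefore the set
\[
U := \bigl\{ u(t) \colon t>0,\ \Phi(\cdot,u(\cdot,t))\in H^1(\Omega),\ \entropy(u(t))\le C \bigr\}
\]
consists of probability densities (by~\eqref{eq:p4}), lies in $L^\infty(\Omega)$ (with a uniform bound; recall $u$ is bounded via \eqref{eq:linfbound}-type arguments from $u^0$, though only the entropy bound is needed here), and satisfies the hypothesis \eqref{eq:eep-a} of Theorem~\ref{th:eep}.

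Next I would apply Theorem~\ref{th:eep} to this set $U$ to produce a single constant $C_U>0$, depending only on the bound $C$ (not on the particular solution), such that
\[
\entropy(u(t)) \le C_U\,\entropyproduction(u(t)) \quad \text{for a.e.\ } t>0.
\]
Combining with the entropy dissipation inequality~\eqref{eq:entropy}, which holds in the sense of measures on $(0,\infty)$, yields
\[
\partial_t \entropy(u) \le -\entropyproduction(u) \le -\frac{1}{C_U}\, \entropy(u)
\]
in the sense of measures. Setting $\gamma := 1/C_U$, a standard Gronwall argument for distributional differential inequalities (multiply by $e^{\gamma t}$ and test against a nonnegative smooth cutoff; $t\mapsto \entropy(u(t))$ is essentially nonincreasing by~\eqref{eq:entropy} alone) gives
\[
\entropy(u(t)) \le \entropy(u^0)\, e^{-\gamma t} \quad \text{for a.a.\ } t>0,
\]
using~\eqref{eq:init-entropy} to anchor the bound at $t=0^+$ through $\esssup_{t>0}\entropy(u(t))\le \entropy(u^0)$.

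The uniformity claim follows because $C_U$ in Theorem~\ref{th:eep} depends on $U$ only through the uniform entropy bound $\sup_{u\in U}\entropy(u) \le C$, which in turn is controlled by the assumption $\entropy(u^0) \le C$. Thus the single $\gamma = 1/C_U$ works for all initial data satisfying~\eqref{eq:convergence-b}.

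The only subtlety, which I would treat as the main technical point, is the Gronwall step carried out purely in the distributional sense: since~\eqref{eq:entropy} is an inequality of measures and $t\mapsto \entropy(u(t))$ is only defined a.e., I would work with a nonincreasing representative (the function $t\mapsto \operatorname{ess\,sup}_{s\ge t}\entropy(u(s))$) or, alternatively, test the distributional inequality $\partial_t(e^{\gamma t}\entropy(u)) \le 0$ against smooth nonnegative cutoffs approximating $\mathbf 1_{(0,t)}$ and pass to the limit, combining with~\eqref{eq:init-entropy} to start from $\entropy(u^0)$.
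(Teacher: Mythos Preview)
Your proposal is correct and follows essentially the same approach as the paper: apply Theorem~\ref{th:eep} on an entropy-bounded set, combine with~\eqref{eq:entropy} to get $\partial_t \entropy(u)\le -\gamma\,\entropy(u)$ in the sense of measures, and run Gronwall by showing $e^{\gamma t}\entropy(u(t))$ is essentially nonincreasing, anchoring at $t=0^+$ via~\eqref{eq:init-entropy}. The only cosmetic difference is that the paper takes $U=\{u\in L^\infty_+(\Omega):\int_\Omega u=1,\ \entropy(u)\le C\}$ as a fixed universal set from the outset, which makes the uniformity of $\gamma$ over all initial data with $\entropy(u^0)\le C$ immediate, whereas you define $U$ along the trajectory and then argue separately that $C_U$ depends only on the entropy bound; both routes are fine.
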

\begin{proof}
As the entropy decreases along the solution, the set
\begin{equation*}
U = \left\{ u \in L^\infty_+(\Omega) \colon \int_{\Omega} u=1,\ \entropy(u) \le C \right\}
\end{equation*}
is invariant under the flow generated by the problem: more precisely, $u(t) 
\in U$ for a.~a.\ $t \ge 0$.  Let~$C_U$ be correspondent constant in the 
entropy-entropy production inequality granted by Theorem~\ref{th:eep}.  
Combining the entropy dissipation and entropy-entropy production inequalities 
for a given solution $u$, we obtain
\begin{equation*}
\partial_t \entropy(u(t)) \le - C_U^{-1} \entropy(u(t)) \quad \text{a.~a. } t 
> 0
.
\end{equation*}
Letting $e(t) = \entropy(u(t)) \mathrm{e}^{C_U^{-1} t}$, we see that 
$\partial_t e(t) \le 0$ in the sense of measures, whence $e$ a.~e.\ coincides 
with a nonincreasing function.  Moreover,
\begin{equation*}
\esssup_{t > 0} e(t)
= \esslimsup_{t \to 0} e(t)
= \esslimsup_{t \to 0} \entropy(u(t)) \mathrm{e}^{C_U^{-1}t}
\le \entropy(u^0)
\end{equation*}
yielding \eqref{eq:convergence-a} with $\gamma = C_U^{-1}$.
\end{proof}

\begin{remark}
\label{rem:linfty}
Theorem~\ref{th:eep} holds without assuming the second alternative 
in~\eqref{eq:large-u}.  However, in this case the set~$U$ should be 
uniformly integrable.   This is clear from the proof.
Theorem~\ref{th:convergence} is valid for the solutions constructed in 
Theorem~\ref{th:ex} assuming the first alternative 
in~\eqref{eq:large-u} instead of the second, but
the constant~$\gamma$ in~\eqref{th:convergence} would depend on 
$\|u^0\|_{L^\infty(\Omega)}$.
It suffices to observe that for large~$C$ the set
\begin{equation*}
U = \left\{ u \in L^\infty_+(\Omega) \colon \int_{\Omega} u=1,\ 
\|u\|_{L^\infty(\Omega)} < C \right\}
\end{equation*}
is invariant under the flow.  Indeed, assume that~$C$ is so large that $f(x, 
C) = c$ does not depend on~$x$.  Then for any data $u^0 \in U$ we 
clearly have $f(x, u^0(x)) > c$, and this inequality is preserved 
along the flow.  This follows from Lemma~\ref{lem:prop-class} for the
classical solutions and by approximation for the weak solutions.
\end{remark}

\subsection{Nonlinear Fokker-Planck equations and generalized log-Sobolev inequalities} \label{s:logs}

Let us return for a moment to the setting \eqref{eq:eep-E}-\eqref{eq:eep-f}. Note that we still do not assume any displacement convexity. Theorem \ref{th:ineq-main} immediately implies
\begin{corollary}[Generalized log-Sobolev]
\label{co:ineq-main}
Let $U$ be a uniformly integrable set of smooth probability measures on 
$\overline\Omega$, which satisfy the weighted Poincar\'e inequality \begin{equation}\int_\Omega u(x)\left(g(x)-\int_\Omega ug\right)^2\,dx 
\leq c \int_\Omega u(x)|\nabla g(x)|^2 \, dx\end{equation} with a uniform constant $c$ independent of $u\in U$ and $g\in C^1(\overline \Omega)$. Then 
\begin{equation}
\label{eq:was}
\int_\Omega E(x, u(x)) \, dx
\le C \int_\Omega u(x)|\nabla f(x, u(x))|^2 \, dx,
\end{equation}
where the constant $C$ may depend on $U$ but is independent of $u \in U$.
\end{corollary}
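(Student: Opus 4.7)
The plan is straightforward: combine Theorem \ref{th:ineq-main} with the weighted Poincaré inequality to eliminate the variance term on the right-hand side of \eqref{eq:goalfa}.

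First, I would apply Theorem \ref{th:ineq-main} with the optimal choice $a = \bar f := \int_\Omega uf\,dx$ (or simply note that the inequality \eqref{eq:goalfa} was shown in the discussion preceding Lemma \ref{lem:ineq-main-lemma} to hold in particular for $a = \bar f$). This yields a constant $C_1 > 0$, depending on $U$ only, such that
\begin{equation*}
\int_\Omega E(x,u(x))\,dx \le C_1\left[\int_\Omega u(x)\bigl(f(x,u(x))-\bar f\bigr)^2\,dx + \int_\Omega u(x)|\nabla f(x,u(x))|^2\,dx\right]
\end{equation*}
for every $u \in U$.

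Next, since $u$ is smooth and $f \in C^1(\overline\Omega \times (0,\infty))$ with $u > 0$ (as $u$ is a positive probability measure), the superposition $g(x) := f(x,u(x))$ belongs to $C^1(\overline\Omega)$, so it is a legitimate test function in the assumed weighted Poincaré inequality. Applying the Poincaré inequality with this $g$, and noting that $\int_\Omega u g\,dx = \bar f$, gives
\begin{equation*}
\int_\Omega u(x)\bigl(f(x,u(x))-\bar f\bigr)^2\,dx \le c\int_\Omega u(x)|\nabla f(x,u(x))|^2\,dx,
\end{equation*}
with $c$ independent of $u \in U$ by hypothesis.

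Substituting this bound into the previous display and setting $C := C_1(1+c)$ yields \eqref{eq:was}. There is no real obstacle here; the only minor point to check is that $g = f(\cdot,u(\cdot))$ is admissible in the Poincaré inequality, which follows from the $C^1$ regularity of $f$ and the smoothness and positivity of $u$.
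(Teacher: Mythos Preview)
Your proof is correct and is precisely the argument the paper has in mind: the corollary is stated in the paper with no proof beyond the remark that ``Theorem~\ref{th:ineq-main} immediately implies'' it, and your two-step combination of Theorem~\ref{th:ineq-main} (with $a=\bar f$) and the weighted Poincar\'e inequality applied to $g=f(\cdot,u(\cdot))$ is exactly that immediate implication. The regularity check you flag (that $g\in C^1(\overline\Omega)$) is the only thing one needs to verify, and you handle it correctly.
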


Consider the nonlinear Fokker-Planck equation
\begin{align}
\partial_t u & = -\Div (u \nabla f), 
& (x, t) & \in \Omega \times (0, \infty),
\label{eq:p1w}
\\
u \frac{\partial f}{\partial \nu} & = 0, & (x, t) & \in \partial \Omega \times 
(0, \infty),
\label{eq:p2w}
\\
u & = u^0, & (x, t) & \in \Omega \times {0},
\label{eq:p3w}
\\
u & \ge 0, \ \int_\Omega u \,dx = 1,
& (x, t) & \in \Omega \times (0, \infty)
.
\label{eq:p4w}
\end{align}
Here $u$ is the unknown function and $f = f(x, u(x))$ is a known nonlinear scalar
function of $x$ and $u$, satisfying \eqref{eq:f1a}, \eqref{eq:f2}.  The initial data~$u^0$ is a probability density. As in Remark \ref{period}, \eqref{eq:p2w} can be replaced by the periodic boundary conditions. 

For simplicity, assume that $u^0$ is bounded away from $0$ and $\infty$.  Then the behaviour of $f$ at $u=0,\infty$ is not important, and we do not lose any generality in assuming existence and uniqueness of a $C^2$-smooth probability density $m \colon 
\overline \Omega \to (0, \infty)$ such that $f(x, m(x))=0$ (cf. Section \ref{s:gf}). Define the relative entropy $\mathcal E$ by \eqref{eq:entr0}, \eqref{eq:entr}. The existence of a unique classical solution (which is smooth for $t>0$) for such initial data is straightforward. 

\begin{theorem}[Convergence to equilibrium without reaction]
\label{th:convergencew}
Assume \eqref{eq:f1a}, \eqref{eq:f2}, \eqref{impm}.  Let~$u$ be a solution 
of~\eqref{eq:p1w}--\eqref{eq:p4w} with the initial data $u^0 \in 
L^\infty_+(\Omega)$, $\int_\Omega u^0=1$, $
\kappa_1\le u^0 \le \kappa_2
$ a.e. in $\Omega$
with some $\kappa_1,\kappa_2 > 0$.  Then~$u$ exponentially converges to~$m$ in 
the sense of entropy:
\begin{equation}
\label{eq:convergence-aww}
\entropy(u(t)) \le \entropy(u^0) \mathrm{e}^{-\gamma t} 
,
\end{equation}
where $\gamma=\gamma(\kappa_1,\kappa_2) > 0$ is independent of $u^0$. 
\end{theorem}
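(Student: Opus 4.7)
The plan is to combine an $L^\infty$ maximum-principle bound with the generalized log-Sobolev inequality of Corollary~\ref{co:ineq-main}, and then close the argument by a Gronwall estimate in the spirit of Theorem~\ref{th:convergence}. First, I would show that the solution inherits a uniform-in-time two-sided positive bound from its initial data. Using the monotonicity of $f$ in $u$ from~\eqref{eq:f2} together with the existence of $m_c$ guaranteed in Section~\ref{s:gf}, pick $c_1 < 0 < c_2$, depending only on $\kappa_1, \kappa_2$, such that $m_{c_1} \le u^0 \le m_{c_2}$ a.e.\ in $\Omega$. Mimicking the argument of Lemma~\ref{lem:prop-class}(iii), but observing that the reaction term is absent so the contribution $I_2$ simply disappears, one obtains $\partial_t \int_\Omega (u - m_{c_2})_+\,dx \le 0$ and, by the symmetric computation, $\partial_t \int_\Omega (m_{c_1} - u)_+\,dx \le 0$. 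Both quantities vanish at $t = 0$, so $m_{c_1} \le u(\cdot, t) \le m_{c_2}$ for all $t > 0$, yielding $\kappa_1' \le u(x, t) \le \kappa_2'$ with $\kappa_1' := \inf_\Omega m_{c_1} > 0$ and $\kappa_2' := \sup_\Omega m_{c_2} < \infty$ depending only on $\kappa_1, \kappa_2$.

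Next, multiplying~\eqref{eq:p1w} by $E_u(x, u) = -f$ and integrating by parts using~\eqref{eq:p2w} produces the entropy dissipation identity
\begin{equation*}
\tfrac{d}{dt} \entropy(u(t)) = -\int_\Omega u |\nabla f|^2 \, dx.
\end{equation*}
I would then verify that the invariant set $U := \{v \in L^\infty_+(\Omega) \colon \kappa_1' \le v \le \kappa_2',\ \int_\Omega v\,dx = 1\}$ satisfies the hypotheses of Corollary~\ref{co:ineq-main}. Uniform integrability is immediate from the $L^\infty$ bound. The uniform weighted Poincar\'e inequality is a direct consequence of the classical one: for any $v \in U$ and $g \in C^1(\overline \Omega)$, using that $c \mapsto \int v(g-c)^2\,dx$ is minimized at $c = \int vg\,dx$,
\begin{equation*}
\int_\Omega v \Bigl(g - \textstyle\int_\Omega vg\,dx\Bigr)^2 dx
\le \kappa_2' \int_\Omega \Bigl(g - \tfrac{1}{|\Omega|}\textstyle\int_\Omega g\,dx\Bigr)^2 dx
\le C_\Omega \kappa_2' \int_\Omega |\nabla g|^2 dx
\le \tfrac{C_\Omega \kappa_2'}{\kappa_1'} \int_\Omega v|\nabla g|^2 dx.
\end{equation*}
Thus Corollary~\ref{co:ineq-main} yields a constant $C = C(\kappa_1, \kappa_2)$ for which $\entropy(u(t)) \le C \int_\Omega u |\nabla f|^2 \, dx$ holds for all $t > 0$.

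Combining the entropy dissipation identity with this functional inequality gives $\frac{d}{dt}\entropy(u(t)) \le -C^{-1} \entropy(u(t))$, and Gronwall's lemma delivers~\eqref{eq:convergence-aww} with $\gamma = C^{-1}$ depending only on $\kappa_1, \kappa_2$. I expect the main obstacle to be the two-sided bound in the first step, but since no reaction term is present the argument of Lemma~\ref{lem:prop-class}(iii) in fact \emph{simplifies}, in that the nonlocal quantity $I_2$ drops out entirely; all the remaining ingredients then reduce to routine manipulations.
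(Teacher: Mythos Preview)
Your argument is correct and follows essentially the same route as the paper's own sketch: obtain a two-sided $L^\infty$ bound via the $m_c$-barriers (the paper phrases this as a comparison principle, citing~\cite{KV17}, whereas you directly adapt Lemma~\ref{lem:prop-class}(iii) with $I_2$ dropped---these are the same computation), then apply Corollary~\ref{co:ineq-main} on the resulting invariant set and close by Gronwall. One minor slip: with the paper's convention $f(x,m_c(x))=c$ and $f_u<0$, the map $c\mapsto m_c$ is decreasing, so you want $c_2<0<c_1$ (as the paper writes $c_2\le 0\le c_1$) rather than $c_1<0<c_2$; also note that the existence of the relevant $m_{c_i}$ is justified in the paper by first modifying $f$ outside $[\kappa_1,\kappa_2]$ so that~\eqref{eq:large-u}--\eqref{eq:small-u} hold, a harmless step you may want to make explicit.
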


\begin{remark} A particular case of Theorem \ref{th:convergencew} when $f(x,u)=\frac {\rho(x)}{u^{r+1}}$, $\rho(x)$ is a given function bounded away from $0$ and $\infty$, $r=cst>0$, with $\Omega$ being a torus or a bounded convex domain, has recently been established in \cite{iak16,IS18}. The corresponding Wasserstein gradient flow is related to the  
problem of  quantisation  for  probability  measures. In this situation it is even possible to prove the exponential convergence merely if certain Lebesgue norms of $u^0$ and $\frac 1 {u^0}$ are finite, since under this hypothesis any solution instantaneously \cite{IS18} becomes bounded away from $0$ and $\infty$. This assumption at least visually resembles the definition of the Muckenhoupt weights \cite{St93}, which are known  \cite{FKS82} to satisfy the Poincar\'e inequality. In view of Corollary \ref{co:ineq-main}, it is plausible that similar exponential convergence results hold for general entropies when $u^0$ is, for instance, merely a Muckenhoupt weight. \end{remark}

Let us sketch the proof of Theorem \ref{th:convergencew}. Since the behaviour of $f$ at $u=0,\infty$ is not relevant, we may assume \eqref{eq:large-u} and~\eqref{eq:small-u}. Using \cite[Remark 3.4]{KV17}, we find $m_{c_1}$ and $m_{c_2}$ strictly positive such that $c_2 \le 0 \le c_1$ and
\begin{equation*}
m_{c_1}(x) \le \kappa_1 \le \kappa_2 \le m_{c_2}(x) \quad (x \in \Omega)
.
\end{equation*}  Now observe that problem \eqref{eq:p1w}--
\eqref{eq:p3w} (without fixing the mass to be $1$) admits a comparison principle: $u^0_1(x)\leq u^0_2(x)$ a.e. in $\Omega$ implies $u_1(x,t)\leq u_2(x,t),\ t>0$. This follows from \cite[Lemma 3.1]{KV17} by mimicking the proof of  \cite[Lemma 3.2]{KV17}. Hence, the set $U$ of smooth probability measures satisfying $m_{c_1}\le u \le m_{c_2}$
is invariant under the flow generated by this problem. Corollary 
\ref{co:ineq-main} guarantees that \eqref{eq:was} holds for this $U$. A 
standard Wasserstein entropy-entropy production argument \cite{Vil03} yields 
\eqref{eq:convergence-aww}.

\subsection{Unbalanced transportation inequalities} \label{sec:tala} For simplicity, here we restrict ourselves to the spatially periodic setting, although everything seems to work for bounded convex domains. Let $\Mm$ and $\Pp$ be the sets of Radon and probability measures, resp.,  on the flat torus $\mathbb T^d$.  The Hellinger-Kanto\-rovich distance, cf. \cite{KMV16A,LMS16,LMS18,CP18,CPSV18,Rez15}, on $\Mm$ and the spherical Hellinger-Kantoro\-vich distance, cf. \cite{LM17,BV18}, on $\Pp$ can be introduced as follows.
\begin{definition}[Conic distance] \label{d:metr}
Given two Radon measures $\rho_0,\rho_1\in \Mm$ we define
\begin{equation}\label{e:mini}
d_{HK}^2(\rho_0,\rho_1)=\inf\limits_{\mathcal{A}(\rho_0,\rho_1)} \int_0^1\left(\int_{\mathbb T^d}(|v_t|^2+|\alpha_t|^2)\cd\rho_t\right)\cd t,
\end{equation}
where the admissible set $\mathcal{A}(\rho_0,\rho_1)$ consists of all $(\rho_t,\alpha_t,v_t)_{t\in [0,1]}$ such that
\begin{equation*}
\left\{ 
\begin{array}{l} 
\rho\in\mathcal{C}_w([0,1];\Mm),\\
\rho|_{t=0}=\rho_0;\quad \rho|_{t=1}=\rho_1,\\
(\alpha,v)\in L^2(0,T;L^2(\cd\rho_t)\times L^2(\cd\rho_t)^d),\\
\partial_t\rho_t+\dive(\rho_t v_t)=\rho_t\alpha_t \quad \mbox{in the weak sense}.
\end{array}
\right.
\end{equation*}
\end{definition}

\begin{definition}[Spherical distance] \label{d:metr1}
Given probability measures $\rho_0,\rho_1\in \Pp$ we define
\begin{equation}\label{e:mini1}
d_{HKS}^2(\rho_0,\rho_1)=\inf\limits_{\mathcal{A}_1(\rho_0,\rho_1)} \int_0^1\left(\int_{\mathbb T^d}(|v_t|^2+|\alpha_t|^2)\cd\rho_t\right)\cd t,
\end{equation}
where the admissible set $\mathcal{A}_1(\rho_0,\rho_1)$ consists of all $(\rho_t,\alpha_t,v_t)_{t\in [0,1]}$ such that
\begin{equation*}
\left\{ 
\begin{array}{l} 
\rho\in\mathcal{C}_w([0,1];\Pp),\\
\rho|_{t=0}=\rho_0;\quad \rho|_{t=1}=\rho_1,\\
(\alpha,v)\in L^2(0,T;L^2(\cd\rho_t)\times L^2(\cd\rho_t)^d),\\
\partial_t\rho_t+\dive(\rho_t v_t)=\rho_t\alpha_t \quad \mbox{in the weak sense}.
\end{array}
\right.
\end{equation*}
\end{definition}
The relation between the two distances is given by the fact that $(\Mm,d_{HK})$ is a metric cone over $(\Pp,d_{HKS})$ \cite{LM17,BV18} (see, e.g., \cite{Bur,BH99} for the abstract definition of a metric cone). The definitions above and the classical Benamou-Brenier formula immediately imply that \begin{equation} \label{compa} d_{HK}( \rho_0,\rho_1)\leq d_{HKS}( \rho_0,\rho_1) \leq W_2( \rho_0,\rho_1)\end{equation} for all $ \rho_0,\rho_1\in \Pp(\mathbb T ^d)$, where $W_2$ stands for the quadratic Wasserstein distance.

The conventional transportation inequality \eqref{eq:goalfa4}  (also known as 
Talagrand's inequality \cite{OV00,CMcV03,CGH04}) estimates the Wasserstein 
distance by strictly displacement convex relative entropies. Here we present 
similar inequalities for the spherical distance $d_{HKS}$ and the conic 
distance $d_{HK}$, but for a much wider class of entropies. In view of 
\eqref{compa}, Theorem~\ref{th:ineq-tala} is interesting merely for the 
entropies which are not strictly geodesically convex in the Wasserstein space. 

\begin{remark} \label{rem:en} In Section \ref{s:gf} we defined the relative entropy $\entropy(u)$ for 
bounded probability distributions, but we can actually use any absolutely continuous probability measure $u$, although the entropy may become infinite. Moreover, the relative entropy can be defined in the same way for distributions of any mass, and without assuming that the implicit function $m$ defined by \eqref{impm} is a probability measure (cf. \cite{KV17}). \end{remark}

\begin{theorem}[Spherical Talagrand inequality]
\label{th:ineq-tala}
Suppose that~$f$ satisfies~\eqref{eq:f1a}--\eqref{eq:f5}, \eqref{impm}. Assume 
that the second of the alternatives in \eqref{eq:large-u} holds, and 
the limit is uniform w.r.t. $x\in \mathbb T^d$. Let $u^0\in 
L^1(\mathbb T^d)$ be a probability density with 
$\entropy(u^0)<\infty$. Then \begin{equation}
\label{eq:tala}
d_{HKS}^2(u^0,m) \leq C\, \entropy(u^0),
\end{equation}
with $C$ independent of $u^0$.
\end{theorem}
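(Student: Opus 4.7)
The plan is to use the Otto--Villani strategy, exploiting the entropy-entropy production inequality (Theorem~\ref{th:eep}) together with the gradient flow structure of \problem{} in the spherical Hellinger--Kantorovich geometry. First I would approximate the probability density $u^0$ by bounded, strictly positive, smooth probability densities $u_n^0$ with $\entropy(u_n^0) \to \entropy(u^0)$, reducing to the case where Theorem~\ref{th:ex} applies and yields a weak solution $u_n(t)$ of \problem{} starting at $u_n^0$. At the end I will pass to the limit in $n$ using lower semicontinuity of $d_{HKS}$ on the approximating sequence and continuity of $\entropy$ along it.

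Next I would identify the gradient flow as an admissible curve for $d_{HKS}$. Setting $v_t := -\nabla f(\cdot,u_n(\cdot,t))$ and $\alpha_t := f(\cdot,u_n(\cdot,t)) - \bar f_n(t)$, the weak formulation of \eqref{eq:p1} is precisely the continuity-reaction equation $\partial_t u_n + \dive(u_n v_t) = u_n \alpha_t$, so $(u_n,\alpha,v)$ is admissible in Definition~\ref{d:metr1} on every $[0,T]$, with instantaneous action $\int_\Omega u_n(|v_t|^2+|\alpha_t|^2)\,dx = \entropyproduction(u_n(t))$. The length (reparametrization-invariant) formulation of $d_{HKS}$ then gives
$$d_{HKS}(u_n^0,u_n(T)) \leq \int_0^T \sqrt{\entropyproduction(u_n(t))}\,dt.$$

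The heart of the argument is the Otto--Villani trick. Since the entropy dissipation inequality \eqref{eq:entropy} makes the orbit $\{u_n(t)\}_{t\geq 0}$ lie in the set $\{v:\entropy(v)\leq \entropy(u_n^0)\}$, Theorem~\ref{th:eep} yields $\entropy(u_n(t))\leq C\,\entropyproduction(u_n(t))$ uniformly in $t$ and $n$. Combining with \eqref{eq:entropy} and the elementary consequence $\entropyproduction/\sqrt{\entropy}\geq \sqrt{\entropyproduction}/\sqrt{C}$, one obtains in the sense of measures
$$\frac{d}{dt}\,2\sqrt{\entropy(u_n(t))} \leq -\frac{\entropyproduction(u_n(t))}{\sqrt{\entropy(u_n(t))}} \leq -\frac{\sqrt{\entropyproduction(u_n(t))}}{\sqrt{C}}.$$
Integrating over $[0,T]$ yields $\int_0^T \sqrt{\entropyproduction(u_n(t))}\,dt \leq 2\sqrt{C\,\entropy(u_n^0)}$, and hence $d_{HKS}(u_n^0,u_n(T))\leq 2\sqrt{C\,\entropy(u_n^0)}$ uniformly in $T$. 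The same estimate applied to $[T_1,T_2]$ shows that $\{u_n(T)\}_T$ is Cauchy in $d_{HKS}$; its limit as $T\to\infty$ must coincide with $m$ by the exponential entropy decay of Theorem~\ref{th:convergence}, giving the inequality for $u_n^0$, and then for $u^0$ after passing $n\to\infty$.

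The main technical obstacle is to rigorously justify the chain rule $(d/dt)\sqrt{\entropy(u_n)} \leq -\entropyproduction(u_n)/(2\sqrt{\entropy(u_n)})$ when $\entropy(u_n(t))$ is known only as a nonincreasing function whose distributional derivative is dominated by $-\entropyproduction(u_n)$; a clean remedy is to regularize in time, use the classical chain rule, and pass to the limit by monotonicity, carefully avoiding the region where $\entropy$ could vanish by working first on finite $[0,T]$ with $\entropy(u_n(T))>0$. A secondary subtlety is upgrading the convergence $u_n(T)\to m$ from entropy to $d_{HKS}$, which follows from the Cauchy property just established together with uniqueness of the entropy minimizer.
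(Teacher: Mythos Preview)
Your approach is the same Otto--Villani strategy that the paper uses, and the skeleton is correct. Two points deserve attention.

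First, once you have reduced to smooth, strictly positive $u^0$, you should invoke the \emph{classical} solution from Lemma~\ref{lem:ex-smooth} (cf.\ Remark~\ref{period}) rather than the weak solution of Theorem~\ref{th:ex}. For classical solutions the entropy dissipation is an \emph{identity} \eqref{eq:entropy-diss-class}, and all the differential calculus ($d_{HKS}$-upper-right-derivative bound, chain rule for $\sqrt{\entropy}$) is immediate. The paper does exactly this, packaging the estimate via the auxiliary function $\phi(s)=2\sqrt{C_1\entropy(u_s)}+d_{HKS}(u_t,u_s)$ and showing $\frac{d}{ds}\big|^+\phi\le 0$. This completely eliminates what you flag as the ``main technical obstacle'' and renders the time-regularization detour unnecessary.

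Second, and more substantively, your argument as written yields a constant $C$ that depends on the entropy level $\entropy(u^0)$, since the constant in Theorem~\ref{th:eep} depends on $\sup_{U}\entropy$. The theorem, however, asserts $C$ independent of $u^0$. The paper closes this gap by noting that $d_{HKS}$ is globally bounded (by $\pi$), so \eqref{eq:tala} is trivial with $C=\pi^2$ whenever $\entropy(u^0)\ge 1$; the Otto--Villani argument is then only needed on the fixed level set $\{\entropy\le 1\}$, which gives a universal constant.

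A minor clarification on the limit identification: the Cauchy property gives a $d_{HKS}$-limit $u_\infty$, and Theorem~\ref{th:convergence} gives $\entropy(u(T))\to 0$; to conclude $u_\infty=m$ you still need lower semicontinuity of $\entropy$ along weak (equivalently $d_{HKS}$) convergence. The paper secures this by deriving an affine lower bound $E(x,u)>a(x)+cu$ and citing a standard lsc result for convex integrands.
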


\begin{proof} The proof is an adaptation of the Otto-Villani strategy \cite{OV00}. We first observe that it suffices to prove the theorem when $u^0$ is smooth and strictly positive. Indeed, every $u^0 \in L^1(\mathbb T^d)$ with finite entropy can be approximated with bounded (from above and below) functions $\chi_k \circ u^0$, where $\chi_k(s)=\max (k^{-1}, \min (s,k))$. Since both $d_{HK}$ and $W_2$ metrize the weak topology of $\Pp(\mathbb T ^d)$, \eqref{compa} implies that $d_{HKS}$ metrizes the same topology. This fact and Beppo Levi's Theorem imply that both sides of \eqref{eq:tala} are continuous w.r.t. our approximation. Each of the $\chi_k \circ u^0$ can be approximated by smooth bounded (from above and below) functions, cf. the proof of Theorem \ref{th:ex}, so that both sides of \eqref{eq:tala} are continuous w.r.t. the approximation. The claim follows by a diagonal argument with renormalization of the masses in order to have an approximating sequence of probability distributions. 

Since the left-hand side is always bounded by $\pi^2$ \cite{BV18}, we only need to consider the case when $\entropy(u^0)$ is bounded, say, by $1$. Consider the classical solution $u$ to problem \eqref{eq:p1}, \eqref{eq:p3}, \eqref{eq:p4} on $\mathbb T^d$ (cf. Lemma \ref{lem:ex-smooth} and Remark \ref{period}), and let $f=f(x,u(x,t))$. As in the proof of Theorem \ref{th:convergence}, with the help of Theorem \ref{th:eep} we can find a constant $C_1$ such that \begin{equation}
\label{eq:eep-1}
\entropy(u_t) \le C_1 \entropyproduction(u_t) ,\ t\geq 0
.
\end{equation}

A simple scaling observation shows that the triple $$(u_{s+th},h (f_{s+th}-\overline f_{s+th}),h \nabla f_{s+th})$$ belongs to the admissible set $\mathcal{A}_1(u_s,u_{s+h})$, $s\geq 0,$ $h> 0$. By the definition of the distance, $$d_{HKS}(u_s,u_{s+h})\leq h\sqrt{\int_0^1\left(\int_{\mathbb T^d}(|f_{s+th}-\overline f_{s+th}|^2+|\nabla f_{s+th}|^2) u_{s+th}\cd x\right)\cd t}.$$ As $h\to 0$, the square root on the right-hand side converges to $\entropyproduction(u_s)$, and we infer\begin{equation} \frac d {dh}\Big|^+_{h=0} d_{HKS}(u_s,u_{s+h})\leq \sqrt{\entropyproduction(u_s)}.\end{equation} Consequently, \begin{multline} \frac d {ds}\Big|^+ d_{HKS}(u_t,u_s)=\limsup_{h\to 0} \frac {d_{HKS}(u_t,u_{s+h})-d_{HKS}(u_t,u_s)}{h}\\ \leq \limsup_{h\to 0} \frac {d_{HKS}(u_s,u_{s+h})}{h} \leq  \sqrt{\entropyproduction(u_s)},\ t\leq s.\label{eq:eep-2}\end{multline} 

Consider the function $$\phi(s):=2\sqrt{C_1 \entropy(u_s)}+d_{HKS}(u_t,u_s), \ s\geq t.$$ By \eqref{eq:eep-df}, \eqref{eq:eep-1} and \eqref{eq:eep-2}, $$\frac d {ds}\Big|^+\phi(s)\le \left[-\sqrt{\frac {C_1\entropyproduction(u_s)}{\entropy(u_s)}}+1\right]\sqrt{\entropyproduction(u_s)}\leq 0.$$ Therefore \begin{equation}\label{eq:cauchy} d_{HKS}(u_t,u_s)\leq \phi(s)\le \phi(t)=2\sqrt{C_1 \entropy(u_t)}\leq 2 \sqrt{C_1 e^{-\gamma t}\entropy(u^0)}.\end{equation} The cone $(\Mm, d_{HK})$ is a complete metric space (cf. \cite{KMV16A}), hence \cite{BH99} the sphere $(\Pp, d_{HKS})$ is also complete. Now \eqref{eq:cauchy} yields existence of $u_\infty\in \Pp$ such that $u_t\to u_\infty$ as $t\to \infty$ in $(\Pp, d_{HKS})$ and thus weakly as probability measures. Fix $c>0$ such that there exists $m_{-c}$ (actually any $c>0$ would work since the second alternative in \eqref{eq:large-u} is assumed). Observing that $E_{u}=-f>c$ for $u>m_{-c}(x)$ we can deduce existence of a continuous function $a:\mathbb T^d \to \R$ such that \begin{equation} E(x,u)>a(x)+cu. \label{Ebound} \end{equation} Taking into account that $E_{uu}>0$ and using the results of \cite[Subsection 6.4.5]{FL07} we infer that the entropy functional $\entropy$ is lower-semicontinuous w.r.t. the weak convergence , whence $\entropy(u_\infty)=0$, and $u_\infty=m$. Letting $t=0$ and $s\to +\infty$ in \eqref{eq:cauchy}, we get the claim \eqref{eq:tala}. \end{proof}

\begin{corollary}[Stability of the spherical gradient flows w.r.t. the spherical distance]
In Theorem~\ref{th:convergence} with $\Omega = \mathbb T^d$ one has
\begin{equation}
d_{HKS}^2(u_t,m) \leq C\,
\entropy(u^0) \mathrm{e}^{-\gamma t} \quad \text{a.~a.\ } t > 0
.
\end{equation}
\end{corollary}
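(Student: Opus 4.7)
The plan is to combine the two deep results already proved: the spherical Talagrand inequality from Theorem~\ref{th:ineq-tala} and the exponential entropy decay from Theorem~\ref{th:convergence}. The rest is essentially bookkeeping.

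First I would fix the weak solution $u$ from Theorem~\ref{th:convergence} and, for a.~a.\ $t>0$, view $u(t)\in L^\infty_+(\mathbb T^d)$ as a probability density to which Theorem~\ref{th:ineq-tala} can be applied. This is legitimate because we are on the torus, $u(t)$ is an admissible initial datum (it is in $L^1$, in fact in $L^\infty$), and $\entropy(u(t))$ is finite (it is bounded by $\entropy(u^0)$ thanks to \eqref{eq:init-entropy}). Moreover, the hypotheses on $f$ in Theorem~\ref{th:convergence} and Theorem~\ref{th:ineq-tala} match. Thus
\begin{equation*}
d_{HKS}^2(u(t),m)\le C\,\entropy(u(t)),
\end{equation*}
with the constant $C$ supplied by Theorem~\ref{th:ineq-tala} and independent of $t$ since it is independent of the initial datum in that theorem.

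Next I would invoke the exponential entropy decay \eqref{eq:convergence-a} of Theorem~\ref{th:convergence}, which reads $\entropy(u(t))\le \entropy(u^0)\mathrm{e}^{-\gamma t}$ for a.~a.\ $t>0$. Substituting yields the claimed
\begin{equation*}
d_{HKS}^2(u(t),m)\le C\,\entropy(u^0)\,\mathrm{e}^{-\gamma t}.
\end{equation*}

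There is no real obstacle here; the only small point to verify is that the constant $C$ in Theorem~\ref{th:ineq-tala} depends only on the data of the problem ($f$, $m$, $\Omega$) and not on the particular probability density used as initial datum, which is exactly what the statement asserts. One might alternatively bypass Theorem~\ref{th:ineq-tala} and reproduce its proof directly starting the flow at time $t$ instead of $0$: the chain \eqref{eq:cauchy}, with $t$ replaced by the current time and $s\to\infty$, gives $d_{HKS}(u(t),m)\le 2\sqrt{C_1\,\entropy(u(t))}$, after which \eqref{eq:convergence-a} concludes. Either route yields the corollary in a few lines.
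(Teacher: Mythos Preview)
Your proposal is correct and matches the paper's approach: the corollary is stated without proof, as it follows immediately by chaining the spherical Talagrand inequality \eqref{eq:tala} with the exponential entropy decay \eqref{eq:convergence-a}, exactly as you do.
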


Using a similar argument and the entropy-entropy production inequality 
obtained in \cite[Theorem 2.9]{KV17}  for the Hellinger-Kantorovich 
gradient flows, we can get  a transportation inequality for the conic 
distance.  From now on we do not assume that the implicit function 
$m$ defined by \eqref{impm} has mass $1$ (cf. Remark \ref{rem:en}).
\begin{theorem}[Conic Talagrand inequality]
\label{th:ineq-tala2}
Suppose that~$f$ satisfies~\eqref{eq:f1a}--\eqref{eq:f5}, 
\eqref{impm}.  Let $u^0\in L^1(\mathbb T^d)$, $\entropy(u^0)<\infty$. 
Then \begin{equation}
\label{eq:tala2}
d_{HK}^2(u^0,m) \leq C\, \entropy(u^0),
\end{equation}
with $C$ independent of $u^0$.
\end{theorem}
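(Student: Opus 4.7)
The strategy is to adapt the Otto-Villani scheme used in Theorem~\ref{th:ineq-tala}, replacing the spherical Hellinger-Kantorovich gradient flow by the conic one. The key input is the conic entropy-entropy production inequality \eqref{eq:goalfa3} from \cite[Theorem 2.9]{KV17}, which on sublevel sets of $\entropy$ gives $\entropy(u) \le C_1 \entropyproduction_{HK}(u)$ with $\entropyproduction_{HK}(u) := \int_{\mathbb T^d} u(f^2 + |\nabla f|^2)\, dx$.

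As a first step, by the truncation-and-mollification procedure used at the start of the proof of Theorem~\ref{th:ineq-tala}, one reduces to the case when $u^0$ is smooth and bounded from above and below by positive constants; both sides of \eqref{eq:tala2} pass to the limit since $d_{HK}$ metrizes weak convergence on bounded subsets of $\Mm$. On such data, I would run the conic Hellinger-Kantorovich gradient flow
\begin{equation*}
\partial_t U = -\Div(U \nabla F) + U F, \qquad F = f(x, U),
\end{equation*}
on $\mathbb T^d$ starting from $u^0$; a classical smooth solution exists by arguments analogous to those of Lemma~\ref{lem:ex-smooth} (the fitness-driven flow is discussed in \cite{KV17}). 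A direct integration by parts yields the conic dissipation identity $\partial_t \entropy(U_t) = -\entropyproduction_{HK}(U_t)$, which combined with the entropy-entropy production inequality produces exponential decay $\entropy(U_t) \le \entropy(u^0)\, e^{-t/C_1}$ by a standard Gr\"onwall argument.

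The geometric heart of the proof is the observation that for $s \ge 0$ and $h > 0$ the rescaled triple $(U_{s+th},\, h\nabla F_{s+th},\, h F_{s+th})$, $t\in[0,1]$, lies in $\mathcal{A}(U_s, U_{s+h})$ of Definition~\ref{d:metr}, since the conic flow PDE is precisely the continuity equation with source. Plugging this curve into \eqref{e:mini} and letting $h \to 0^+$ produces the metric velocity bound
\begin{equation*}
\frac{d}{ds}\Big|^+ d_{HK}(U_t, U_s) \le \sqrt{\entropyproduction_{HK}(U_s)}, \qquad s \ge t.
\end{equation*}
Then, exactly as in the proof of Theorem~\ref{th:ineq-tala}, the function $\phi(s) := 2\sqrt{C_1\, \entropy(U_s)} + d_{HK}(U_t, U_s)$ has nonpositive right Dini derivative, so $d_{HK}(U_t, U_s) \le \phi(s) \le \phi(t) = 2\sqrt{C_1\, \entropy(U_t)}$ for all $s \ge t$. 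The exponential entropy decay together with this estimate makes $\{U_t\}$ Cauchy in the complete metric space $(\Mm, d_{HK})$ (see \cite{KMV16A}); denote its limit by $U_\infty$. Lower semicontinuity of $\entropy$ with respect to weak convergence of measures, available via \cite[Subsection 6.4.5]{FL07}, forces $\entropy(U_\infty) \le \liminf_t \entropy(U_t) = 0$, whence $U_\infty = m$. Setting $t = 0$ and sending $s \to \infty$ in the Cauchy estimate yields \eqref{eq:tala2}.

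The main obstacle is the identification $U_\infty = m$: since the conic flow does not conserve mass and $m$ need not be a probability measure, weak convergence of the varying-mass family $\{U_t\}$ is more delicate than in the spherical case of Theorem~\ref{th:ineq-tala}. This step hinges on the convexity of $E(x,\cdot)$ combined with a coercivity lower bound of the form $E(x, u) \ge a(x) + c u$ (analogous to \eqref{Ebound} in the previous proof) and the lower semicontinuity machinery of \cite[Subsection 6.4.5]{FL07}, and it relies on the exponential entropy decay having already been secured.
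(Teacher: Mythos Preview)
Your Otto--Villani scheme is essentially the paper's argument for the regime $\entropy(u^0) < \entropy(0)$, but you have a genuine gap: you assert that \cite[Theorem~2.9]{KV17} yields $\entropy(u) \le C_1 \entropyproduction_{HK}(u)$ on entropy sublevel sets with a uniform constant. That theorem, however, requires that the set $U$ on which one works contains no sequence converging to $0$ in measure. The sublevel set $\{\entropy \le \entropy(u^0)\}$ satisfies this only when $\entropy(u^0) < \entropy(0)$; once $\entropy(u^0) \ge \entropy(0)$, the zero measure lies in the weak closure of the sublevel set (indeed $0$ itself has entropy $\entropy(0)$), and the entropy--entropy production constant is not available. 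Your proof therefore does not cover the range $\entropy(u^0) \ge \entropy(0)$, and since $d_{HK}$ is unbounded (unlike $d_{HKS}$, which is bounded by $\pi$), you cannot dispose of this range trivially.

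The paper treats the complementary case $\entropy(u^0) \ge \entropy(0)$ by an entirely different, elementary argument: it invokes the crude mass bound $d_{HK}^2(u^0, m) \le 4\left(\int_{\mathbb T^d} u^0 + \int_{\mathbb T^d} m\right)$ and then controls $\int_{\mathbb T^d} u^0$ by $C(1 + \entropy(u^0))$ using exactly the coercivity estimate $E(x,u) \ge a(x) + cu$ that you mention. You invoke that estimate only for the lower semicontinuity step, but its real role in the paper is to bypass the gradient-flow machinery altogether when the entropy is large. Splitting into these two cases, and using the mass bound in the second, is the missing ingredient.
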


\begin{proof}   As in the previous proof, we may assume that $u^0$ is smooth and strictly positive. In the case when $\entropy(u^0)<\entropy(0)$ the proof mimicks the previous one, basically substituting the objects related to the spherical Hellinger-Kantorovich distance with the conic ones. Let us merely describe the small differences that show up. Consider the classical solution $u$ to the conic Hellinger-Kantorovich gradient flow \cite{KV17}. The condition \eqref{eq:large-u} is not needed because the conic entropy-entropy production  inequality \cite[Theorem 2.9]{KV17} does not require it.  However, in order to apply that theorem we need to find a set $U$ containing the trajectory $u_t$ of the conic gradient flow starting from $u^0$ such that no sequence in $U$ converges to $0$ in the sense of measures. An argument involving Lebesgue's Dominated Convergence Theorem shows that we can simply take \begin{equation*}
U = \left\{ u \in L^\infty_+(\Omega):\ \entropy(u) \le \entropy(u^0)<\entropy(0)\right\}. 
\end{equation*} It remains to treat the case $\entropy(u^0)\ge \entropy(0)$. Since $\entropy(0)$ is a positive constant, it suffices to prove the inequality \begin{equation}
\label{eq:tala3}
d_{HK}^2(u^0,m) \leq C\, (1+\entropy(u^0)).
\end{equation} We recall \cite{CP18,LM17,BV18} the upper bound for the Hellinger-Kantorovich distance in terms of the masses, $$d_{HK}^2(u^0,m)\leq 4\left(\int_{\mathbb T^d} u^0 + \int_{\mathbb T^d} m\right).$$ Consequently, it is enough to show \begin{equation}
\label{eq:tala4}
\int_{\mathbb T^d} u^0 \leq C\, (1+\entropy(u^0)).
\end{equation} Let $c$ be a small positive constant such that the implicit function $m_{-c}$ exists.
As in the previous proof, we can deduce \eqref{Ebound} with $c$ just defined and some function $a(x)$ independent of $u$. Hence,
\begin{equation*}
\int_{\mathbb T^d}  u^0 \leq C+c^{-1}\int_{\mathbb T^d}  E(x,u^0(x))\cd x,
\end{equation*} 
proving \eqref{eq:tala4}. \end{proof}

\begin{corollary}[Stability of the conic gradient flows w.r.t. the conic distance]
In \cite[Theorem~1.12]{KV17} with $\Omega = \mathbb T^d$ one has
\begin{equation}
d_{HK}^2(u_t,m) \leq C\,
\entropy(u^0) \mathrm{e}^{-\gamma t} \quad \text{a.~a.\ } t > 0
.
\end{equation}
\end{corollary}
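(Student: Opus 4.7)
The plan is to reduce the corollary to a direct combination of the Conic Talagrand inequality (Theorem~\ref{th:ineq-tala2}) and the exponential entropy decay along the conic Hellinger--Kantorovich gradient flow established in \cite[Theorem~1.12]{KV17}, in the same spirit in which the spherical corollary after Theorem~\ref{th:ineq-tala} follows from that theorem together with Theorem~\ref{th:convergence}.

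First, I would invoke \cite[Theorem~1.12]{KV17} applied on $\Omega = \mathbb{T}^d$: for the classical/weak solution $u_t$ of the conic Hellinger--Kantorovich gradient flow with initial datum $u^0$ (of finite entropy), the entropy-entropy production argument based on \cite[Theorem~2.9]{KV17} gives the exponential decay
\begin{equation*}
\entropy(u_t) \le \entropy(u^0)\,\mathrm{e}^{-\gamma t} \quad \text{for a.a.\ } t>0,
\end{equation*}
with $\gamma>0$ as provided there. In particular, $\entropy(u_t)<\infty$ for every such $t$, so $u_t$ is an admissible datum for Theorem~\ref{th:ineq-tala2}.

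Next, I would apply Theorem~\ref{th:ineq-tala2} to the probability (or Radon) measure $u_t$ rather than to $u^0$: this yields
\begin{equation*}
d_{HK}^2(u_t, m) \le C\,\entropy(u_t),
\end{equation*}
with $C$ independent of $t$ (the constant in Theorem~\ref{th:ineq-tala2} depends only on $f$, $m$, and the structural assumptions, not on the specific datum). Combining this with the exponential decay of the entropy gives the claim
\begin{equation*}
d_{HK}^2(u_t, m) \le C\,\entropy(u^0)\,\mathrm{e}^{-\gamma t}.
\end{equation*}

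No step requires a genuinely new idea: the whole content is already packaged in Theorem~\ref{th:ineq-tala2} and in \cite[Theorem~1.12]{KV17}. The only small subtlety to check is that the Talagrand-type constant from Theorem~\ref{th:ineq-tala2} is indeed applicable uniformly along the trajectory, which follows from the fact that the bound in Theorem~\ref{th:ineq-tala2} does not depend on the initial datum beyond the finiteness of its entropy, and that along the flow the entropy is non-increasing. This is the mildest part of the argument; there is no substantive obstacle to overcome.
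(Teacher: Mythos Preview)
Your proposal is correct and matches the paper's intended argument: the corollary is stated in the paper without proof, immediately after Theorem~\ref{th:ineq-tala2}, precisely because it follows by combining the conic Talagrand inequality with the exponential entropy decay of \cite[Theorem~1.12]{KV17}, exactly as you describe. There is nothing to add.
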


\begin{remark} \label{r:cons} Inequality \eqref{eq:tala2} follows from 
\eqref{eq:tala} and \eqref{compa} provided $u^0$ and $m$ are 
probability measures. However, when the masses of $u^0$ and $m$ do not 
coincide,  \eqref{eq:tala2} is not an immediate consequence of 
\eqref{eq:tala}.  Hence, the conic inequality is new even for the displacement 
convex entropies.\end{remark}

\subsection{Examples}
\label{ss:examples}

Let us consider three simple examples of~$f$:
\begin{align}
f_1(x, u) & = \frac{1 - u^\alpha}{\alpha},
\\
f_2(x, u) &= - \log u - V(x),
\\
f_3(x, u) &= - \log \frac{u}{\sqrt{1 + u^2}} - \frac 12 \log 2
,
\end{align}
where $\alpha \ne 0$ is a real parameter, and $V \in C^2(\overline \Omega)$ 
satisfies
\begin{equation*}
\int_\Omega e^{-V(x)} \, dx = 1
.
\end{equation*}
Note that $V$ does not need to be convex. For the sake of simplicity we assume that $|\Omega|=1$.

The corresponding primitives are
\begin{align}
E_1
& =
\begin{cases}
\frac 1{\alpha(\alpha + 1)}\left(  u^{\alpha + 1}-(\alpha + 
1)u + \alpha \right), & \text{if } \alpha \ne -1
\\
u - \log u - 1, & \text{if } \alpha = -1,
\end{cases}
\\
E_2 &
= u \log u - u + 1 + uV(x),
\\
E_3 &
= u \log \frac{u}{\sqrt{1 + u^2}} - \arctan u + \frac 12 \left(u\log 2 
+ \frac \pi2 \right)
.
\end{align}
In the Otto--Wasserstein setting, $E_1$ corresponds to the porous medium flow, 
$E_2$ corresponds to the linear Fokker--Planck equation, and $E_3$ corresponds 
to the arctangential heat flow~\cite{Bre19}.

Theorems~\ref{th:ineq-main} and~\ref{th:convergencew} are applicable without 
any further assumptions.  Theorem~\ref{th:ex}, Remark~\ref{rem:linfty}, and 
Theorem~\ref{th:ineq-tala2} work in all the cases except for $E_1$ with 
$\alpha \le -1$. Theorems~\ref{th:eep}, \ref{th:convergence}, 
and~\ref{th:ineq-tala} can be applied to $E_1$ with $\alpha > 0$ and~$E_2$.

The entropy associated with~$E_1$ is non-strictly geodesically convex in the 
Wasserstein space provided $\alpha \ge - 1/d$.  The convexity in the the 
Hellinger-Kantorovich spaces can be secured if $\alpha > 0$ (only in the conic 
case) or $d = 2$, $\alpha = -1/2$ or $d = 1$, $\alpha \in [-2/3, -1/2]$.  If 
$V$ is $\lambda$-convex, the entropy associated with~$E_2$ is geodesically 
$\lambda$-convex with the same $\lambda$ in the Wasserstein space but is not 
even semi-convex in the Hellinger-Kantorovich spaces.  Finally, $E_3$ is 
semi-convex neither in the Wasserstein space nor in the Hellinger-Kantorovich 
spaces.  The convexity can be checked formally via the positive definiteness 
of the Hessians (in the sense of the Otto calculus) of the corresponding 
entropies.  We refer to~\cite{Vil03} for the Wasserstein case 
and~\cite{KMV16A} for the conic Hellinger-Kantorovich case.

\subsection*{Acknowledgment}
The second author would like to thank J.A. Carrillo and W. Shi for helpful discussions. The research was partially supported by the Portuguese Government through FCT/MCTES and by the ERDF through PT2020 (projects UID/MAT/00324/2019, PTDC/MAT-PUR/28686/2017 and TUBITAK/0005/2014).

\subsection*{Conflict of interest statement} We have no conflict of interest to declare.

\bibliographystyle{abbrv}
\bibliography{biblios} 
\end{document}